\newtheorem{theorem}{Theorem}[section]
\newtheorem{proposition}[theorem]{Proposition}
\newtheorem{lemma}[theorem]{Lemma}
\newtheorem{corollary}[theorem]{Corollary}
\theoremstyle{definition}
\newtheorem{remark}[theorem]{Remark}
\newtheorem{definition}[theorem]{Definition}
\newtheorem{example}[theorem]{Example}
\numberwithin{equation}{section}
\begin{document}

\author{Xueru Wu,  Yao Ma, Liangyun Chen$^*$}
\address{School of Mathematics and Statistics, Northeast Normal University,
Changchun, 130024, P.R.China}
\email{wuxr884@nenu.edu.cn.}
\address{School of Mathematics and Statistics, Northeast Normal University,
Changchun, 130024, P.R.China}
\email{may703@nenu.edu.cn.}
\address{School of Mathematics and Statistics, Northeast Normal University,
Changchun, 130024, P.R.China}
\email{chenly640@nenu.edu.cn.}

\thanks{*Corresponding author.}

\thanks{\emph{MSC}(2020). 17A32, 17A40, 17B56, 17B38.}
\thanks{\emph{Key words and phrases}. Leibniz triple system, cohomology, relative Rota-Baxter operator, deformation.}

\thanks{Supported by  NSF of Jilin Province (No. YDZJ202201ZYTS589), NNSF of China (No. 12071405) and  CSC of China (202106625001).}

\title{Deformations of relative Rota-Baxter operators on Leibniz Triple Systems}

\begin{abstract}
In this paper, we introduce the cohomology theory of relative Rota-Baxter operators on Leibniz triple systems. We use the cohomological approach to study linear and formal deformations of relative Rota-Baxter operators. In particular, formal deformations and extendibility of order $n$ deformations of a relative Rota-Baxter operators are also characterized in terms of the cohomology theory. We also consider the relationship between cohomology of relative Rota-Baxter operators on Leibniz algebras and associated Leibniz triple systems.
\end{abstract}
\maketitle

\section{Introduction}

The notion of Leibniz triple systems was introduced in {\rm\cite{Bremner}} for the first time. Leibniz triple systems were obtained by using Kolesnikov-Pozhideav algorithm {\rm\cite{Co4}} to Lie triple systems. This algorithm takes the defining identities for a variety of algebras and produced the defining identities for the corresponding variety of dialgebras, such as, one can obtain associative dialgebras from associative algebras. From the relationship between Lie triple systems and Leibniz triple systems, it is natural to extend some properties of Lie triple systems to Leibniz triple systems. So far, people have given its Levi's theorem {\rm\cite{Ma}}, multiplicative basis {\rm\cite{Di}}, centroid {\rm\cite{Cao}} for Leibniz triple systems, we have defined the cohomology {\rm\cite{WCM}} and Nijenhuis operators {\rm\cite{WMC1}} on Leibniz triple systems.

In 1960, the author introduced the notion of Rota-Baxter operators on associative algebras \cite{Baxter}. A Rota-Baxter operator on a Lie algebra was given in the 1980s as the operator form of the classical Yang-Baxter equation. Then in \cite{Kupershmidt}, the author showed more general concept of $\mathcal{O}$-operators (relative Rota-Baxter operators) on Lie algebras, which can be traced back to \cite{Bordemann}. In \cite{AMM}, the authors applied Rota-Baxter operators on super-type algebras, which build relationship between associative superalgebras, Lie superalgebras, $L$-dendriform superalgebras and per-Lie superalgebras. In \cite{BGLW}, the authors introduced the notion of  Rota-Baxter operators on a $3$-Lie algebra, and the cohomology theory of relative Rota-Baxter operators on a $3$-Lie algebra was given in \cite{THS}. The deformations of relative Rota-Baxter operators on Leibniz algebras were researched in \cite{TSZ}. $\mathcal{O}$-operators on Lie triple systems, see \cite{CHMM}. In \cite{ZQ}, the authors introduced the cohomologies and deformations of relative Rota-Baxter operators on Lie-Yamaguti algebras. The relationship between relative Rota-Baxter operators and Nijenhuis operators on Hom-Lie algebras, see \cite{DS}.

In \cite{Gerstenhaber}, the author introduced the deformation of algebraic structures when he studied associative algebras. Then, the deformation theory was extended to Lie algebras \cite{NR}, Leibniz algebras \cite{Balavoine}, Hom-type algebras \cite{MS}, $3$-Lie algebras \cite{Figueroa} and Lie triple systems \cite{YBB}. Recently, deformations of morphisms and Rota-Baxter operators were deeply studied \cite{ABM,Das,FZ,TBGS}. In this paper, we study deformations of relative Rota-Baxter operators on Leibniz triple systems. We do this with the aid of cohomology theory. First, we define the cohomology of relative Rota-Baxter operators on Leibniz triple systems. Then the formal deformations of relative Rota-Baxter operators on Leibniz triple system  are given and we show that the infinitesimal of a formal deformation is a $1$-cocycle. The extendibility of order $n$ deformations is also discussed.

The paper is organized as follows. In Section 2, we recall representations and cohomologies of Leibniz triple systems. In Section 3, we study  the relative Rota-Baxter operators on Lie triple systems with respect to representations, and  we show that the  relationship between the relative Rota-Baxter operator and the Nijenhuis operators  on a Leibniz triple system. In Section 4, we define the cohomology of relative Rota-Baxter operators on Leibniz triple systems based on the cohomology of the Leibniz triple system on the representation space. In Section 5, we investigate linear deformations of the relative Rota-Baxter operator on Leibniz triple systems and we prove that the infinitesimal of two equivalent deformations are the same cohomology class. We also consider the extensibility of the order $n$ deformations of a relative Rota-Baxter operator. In Section 6, we study some connections between cohomology of relative Rota-Baxter operators on Leibniz algebras and associated Leibniz triple systems.

In this paper, all Leibniz triple systems are defined over a fixed but arbitrary field $\mathbb{F}$.

\section{Preliminaries}

In this section, we recall some basic definitions of Leibniz triple systems.
\begin{definition}{\rm\cite{Bremner}}\label{basicdef}
A Leibniz triple system is a vector space $\mathfrak{L}$ endowed with a trilinear operation $ \{ \cdot, \cdot, \cdot \}: \mathfrak{L}\times \mathfrak{L}\times \mathfrak{L}\longrightarrow \mathfrak{L}$ satisfying
\begin{align}
& \{ a, b, \{ c, d, e \} \} = \{ \{ a, b, c \}, d, e \} - \{ \{ a, b, d \}, c, e \} - \{ \{ a, b, e \}, c, d \} + \{ \{ a, b, e \}, d, c \},\label{222.1}\\
& \{ a, \{ b, c, d \}, e \} = \{ \{ a, b, c \}, d, e \} - \{ \{ a, c, b \}, d, e \} - \{ \{ a, d, b \}, c, e \} + \{ \{ a, d, c \}, b, e \},\label{222.2}
\end{align}
for all $a, b, c, d, e \in \mathfrak{L}$.
\end{definition}
One can obtain a Leibniz triple system from a Lie triple system with the same ternary product. A Leibniz algebra $L$ with product $[\cdot, \cdot]$ becomes a Leibniz triple system when $\{x, y, z\}:= [[x, y], z],$ for all $x, y, z\in L.$ More examples see {\rm\cite{Bremner}}. Denote by End$(\mathfrak{L})$ the set consisting of all linear maps on a Leibniz triple system $(\mathfrak{L}, \{\cdot, \cdot, \cdot\})$.

A morphism $f:(\mathfrak{L},\{\cdot,\cdot,\cdot\})\rightarrow (\mathfrak{L}',\{\cdot,\cdot,\cdot\}')$ of Leibniz triple systems is a linear map satisfying
\begin{align*}
f(\{x,y,z\})=\{f(x),f(y),f(z)\}',~~\forall ~x,y,z\in \mathfrak{L}.
\end{align*}
If $f$  is a bijective morphism, then we call it an isomorphism.

\begin{definition}{\rm\cite{Ma}}
Let $(\mathfrak{L},\{\cdot,\cdot,\cdot\})$ be a Leibniz triple system and $V$ a vector space. $V$ is called an $\mathfrak{L}$-module, if $\mathfrak{L}\dot{+}V$ is a Leibniz triple system such that $(1)$ $\mathfrak{L}$ is a subsystem, $(2)$ $\{a,b,c\}\in V$ if any one of $a,b,c \in V;$ $(3)$ $\{a,b,c\}=0$ if any two of $a,b,c \in V.$
\end{definition}

\begin{definition}{\rm\cite{Ma}}\label{def282,3}
Let $(\mathfrak{L},\{\cdot,\cdot,\cdot\})$ be a Leibniz triple system and $V$ a vector space. Suppose $l,m, r : \mathfrak{L} \times \mathfrak{L} \longrightarrow$ End$(V)$ are bilinear maps such that
\begin{align*}
l(a,\{b, c, d\})&=l(\{a, b, c\},d) - l(\{a, c, b\},d) - l(\{a,d,b\},c) + l(\{a,d,c\},b),\\
m(a, d)l(b, c)&=m(\{a, b, c\}, d) - m(\{a, c, b\}, d) - r(c, d)m(a, b) + r(b, d)m(a, c),\\
m(a, d)m(b, c)&=r(c, d)l(a, b) - r(c, d)m(a, b) - m(\{a, c, b\}, d) + r(b, d)l(a, c),\\
m(a, d)r(b, c)&=r(c, d)m(a, b) - r(c, d)l(a, b) - r(b, d)l(a, c) + m(\{a, c, b\}, d),\\
r(\{a, b, c\}, d)&=r(c, d)r(a, b) - r(c, d)r(b, a) - r(b, d)r(c, a) + r(a, d)r(c, b),\\
l(a, b)l(c, d)&=l(\{a, b, c\}, d) - l(\{a, b, d\}, c) - r(c, d)l(a, b) + r(d, c)l(a, b),\\
l(a, b)m(c, d)&=m(\{a, b, c\}, d) - r(c, d)l(a, b) - l(\{a, b, d\}, c) + m(\{a, b, d\}, c),\\
l(a, b)r(c, d)&=r(c, d)l(a, b) - m(\{a, b, c\}, d) - m(\{a, b, d\}, c) + l(\{a, b, d\}, c),\\
m(a, \{b, c, d\})&=r(c, d)m(a, b) - r(b, d)m(a, c) - r(b, c)m(a, d) + r(c, b)m(a, d),\\
r(a, \{b, c, d\})&=r(c, d)r(a, b) - r(b, d)r(a, c) - r(b, c)r(a, d) + r(c, b)r(a, d),
\end{align*}
for all $a, b, c, d \in \mathfrak{L}.$ Then $(r, m, l)$ is called a representation of $\mathfrak{L}$ on $V$. In particular, if $V=\mathfrak{L}$ and $l(a,b)c=\{a,b,c\}$, $m(a,b)c=\{a,c,b\}$, $r(a,b)c=\{c,a,b\}$, then $(r, m, l)$ is called  an adjoint representation of $\mathfrak{L}$ on itself.
\end{definition}

In \cite{Ma}, the authors use a semi-direct product to describe the representation, i.e., $(r,m,l)$ is a representation of a Leibniz triple system $(\mathfrak{L},\{\cdot,\cdot,\cdot\})$ on $V$ if and only if $(\mathfrak{L}\oplus V,\{\cdot,\cdot,\cdot\}_{\mathfrak{L}\oplus V})$ is a Leibniz triple system, where $\{\cdot,\cdot,\cdot\}_{\mathfrak{L}\oplus V}$ is defined by
\begin{align}\label{2882.3}
\{x+u,y+v,z+w\}_{\mathfrak{L}\oplus V}=\{x,y,z\}+l(x,y)w+m(x,z)v+r(y,z)u,
\end{align}
for any $x,y,z\in \mathfrak{L},$ $u,v,w\in V$.

\begin{definition}{\rm\cite{WCM}}
Let $V$ be an $\mathfrak{L}$-module. A $(2n+1)$-linear map $f:\underbrace{\mathfrak{L}\otimes\cdots\otimes \mathfrak{L}}_{2n+1~{\rm times}}\rightarrow V$ is called a $(2n+1)$-cochain of $\mathfrak{L}$ on $V.$ Denote by $C^{2n+1}(\mathfrak{L},V)$ the set of all $(2n+1)$-cochains, for $n\geq 0.$
\end{definition}

\begin{definition}{\rm\cite{WCM}}
Let $(\mathfrak{L},\{\cdot,\cdot,\cdot\})$ be a Leibniz triple system and $(r,m,l)$ a representation of $\mathfrak{L}$ on $V.$ For $n=1,~2,$ the coboundary operator $\delta^{2n-1}:C^{2n-1}(\mathfrak{L},V)\rightarrow C^{2n+1}(\mathfrak{L},V)$ is defined as follows.\\
$(1)$ A $1$-coboundary operator of $\mathfrak{L}$ on $V$ is defined by
\begin{align*}
\delta^1:C^1(\mathfrak{L},V) &\rightarrow C^3(\mathfrak{L},V)\\
                            f&\mapsto \delta^1f,
\end{align*}
for $f\in C^1(\mathfrak{L},V)$ and
\begin{align*}
\delta^1f(x_1,x_2,x_3)=r(x_2,x_3)f(x_1)+m(x_1,x_3)f(x_2)+l(x_1,x_2)f(x_3)-f(\{x_1,x_2,x_3\}).
\end{align*}
$(2)$ A $3$-coboundary operator of $\mathfrak{L}$ on $V$ consists a pair of maps $(\delta^3_1,\delta^3_2),$ where
\begin{align*}
\delta^3_i:C^3(\mathfrak{L},V)&\rightarrow C^5(\mathfrak{L},V)\\
                            f&\mapsto \delta^3_if,
\end{align*}
for $f\in C^3(\mathfrak{L},V)$ and
\begin{align*}
&\delta^3_1f(x_1,x_2,x_3,x_4,x_5)\\
=&~f(x_1,x_2,\{x_3,x_4,x_5\})-f(\{x_1,x_2,x_3\},x_4,x_5)+f(\{x_1,x_2,x_4\},x_3,x_5)+f(\{x_1,x_2,x_5\},x_3,x_4)\\
&-f(\{x_1,x_2,x_5\},x_4,x_3)+l(x_1,x_2)f(x_3,x_4,x_5)-r(x_4,x_5)f(x_1,x_2,x_3)+r(x_3,x_5)f(x_1,x_2,x_4)\\
&+r(x_3,x_4)f(x_1,x_2,x_5)-r(x_4,x_3)f(x_1,x_2,x_5),
\end{align*}
\begin{align*}
&\delta^3_2f(x_1,x_2,x_3,x_4,x_5)\\
=&~f(x_1,\{x_2,x_3,x_4\},x_5)-f(\{x_1,x_2,x_3\},x_4,x_5)+f(\{x_1,x_3,x_2\},x_4,x_5)+f(\{x_1,x_4,x_2\},x_3,x_5)\\
&-f(\{x_1,x_4,x_3\},x_2,x_5)+m(x_1,x_5)f(x_2,x_3,x_4)-r(x_4,x_5)f(x_1,x_2,x_3)+r(x_4,x_5)f(x_1,x_3,x_2)\\
&+r(x_3,x_5)f(x_1,x_4,x_2)-r(x_2,x_5)f(x_1,x_4,x_3).
\end{align*}
\end{definition}

Let $(\mathfrak{L},\{\cdot,\cdot,\cdot\})$ be a Leibniz triple system and $V$ an $\mathfrak{L}$-module. The set
\begin{align*}
Z^1(\mathfrak{L},V)=\{f\in C^1(\mathfrak{L},V)~|~\delta^1f=0\}
\end{align*}
is called the space of $1$-cocycles of $\mathfrak{L}$ on $V$.

The set
\begin{align*}
Z^3(\mathfrak{L},V)=\{f\in C^3(\mathfrak{L},V)~|~\delta^3_1f=\delta^3_2f=0\}
\end{align*}
is called the space of $3$-cocycles of $\mathfrak{L}$ on $V$. The set
\begin{align*}
B^3(\mathfrak{L},V)=\{\delta^1f~|~f\in C^1(\mathfrak{L},V)\}
\end{align*}
is called the space of $3$-coboundaries of $\mathfrak{L}$ on $V$.

Then the $1$-cohomology space and $3$-cohomology space of $(\mathfrak{L},\{\cdot,\cdot,\cdot\})$ are defined as
\begin{align*}
H^1(\mathfrak{L},V):=&Z^1(\mathfrak{L},V).\\
H^3(\mathfrak{L},V):=&Z^3(\mathfrak{L},V)/B^3(\mathfrak{L},V).
\end{align*}

\section{Relative Rota-Baxter operators on Leibniz triple systems}

In this section, we give the definition of Rota-Baxter operators and relative Rota-Baxter operators on Leibniz triple systems. Moreover, we give some characterisation of relative Rota-Baxter operators in terms of Nijenhuis operators and graphs.

\begin{definition}
Let $(\mathfrak{L},\{\cdot,\cdot,\cdot\})$ be a Leibniz triple system. A linear map $R: \mathfrak{L}\rightarrow \mathfrak{L}$ is called a Rota-Baxter operator if it satisfies
\begin{align*}
\{R(x),R(y),R(z)\}=R\Big(\{R(x),R(y),z\}+\{R(x),y,R(z)\}+\{x,R(y),R(z)\}\Big),
\end{align*}
for all $x,y,z\in \mathfrak{L}$.
\end{definition}

\begin{example}
Consider a $2$-dimensional Leibniz triple system $(\mathfrak{L},\{\cdot,\cdot,\cdot\})$ given with respect to a basis $\{x,y\}$ by $$\{x,y,y\}=\{y,y,y\}=x.$$ Then
$
R=
\left(
\begin{matrix}
a_{11}&a_{12}\\
a_{21}&a_{22}
\end{matrix}
\right)
$ is a Rota-Baxter operator on $(\mathfrak{L},\{\cdot,\cdot,\cdot\})$ if and only if
$$\{Ra,Rb,Rc\}=R\Big(\{Ra,Rb,c\}+\{Ra,b,Rc\}+\{a,Rb,Rc\}\Big),$$ for all $a,b,c\in \mathfrak{L}$. We have
\begin{align*}
\{Rx,Ry,Ry\}=&\{a_{11}x+a_{21}y,a_{12}x+a_{22}y,a_{12}x+a_{22}y\}\\
            =&(a_{11}a_{22}^2+a_{21}a_{22}^2)x,
\end{align*}
and
\begin{align*}
&R\Big(\{Rx,Ry,y\}+\{Rx,y,Ry\}+\{x,Ry,Ry\}\Big)\\
=&R\Big(\{a_{11}x+a_{21}y,a_{12}x+a_{22}y,y\}+\{a_{11}x+a_{21}y,y,a_{12}x+a_{22}y\}+\{x,a_{12}x+a_{22}y,a_{12}x+a_{22}y\}\Big)\\
=&R\Big(a_{11}a_{22}x+a_{21}a_{22}x+a_{11}a_{22}x+a_{21}a_{22}x+a_{22}^2x\Big)\\
=&(2a_{11}a_{22}+2a_{21}a_{22}+a_{22}^2)(a_{11}x+a_{21}y).
\end{align*}
Thus, we have $(2a_{11}a_{22}+2a_{21}a_{22}+a_{22}^2)a_{11}=a_{11}a_{22}^2+a_{21}a_{22}^2$ and $a_{21}(2a_{11}a_{22}+2a_{21}a_{22}+a_{22}^2)=0$.

Similarly, we also have
$\{Ry,Ry,Ry\}=R\Big(\{Ry,Ry,y\}+\{Ry,y,Ry\}+\{y,Ry,Ry\}\Big),$ i.e.,
$a_{11}(2a_{12}a_{22}+3a_{22}^2)=a_{12}a_{22}^2+a_{22}^3$ and $a_{21}(2a_{12}a_{22}+3a_{22}^2)=0$.

Hence, we have $
R=
\left(
\begin{matrix}
a_{11}&a_{12}\\
0&0
\end{matrix}
\right)
$ or
$
R=
\left(
\begin{matrix}
0&a_{12}\\
0 &-a_{12}
\end{matrix}
\right)
$.
\end{example}

The definition of relative Rota-Baxter operators (also called $\mathcal{O}$-operators or Kupershmidt operators) is a generalization of Rota-Baxter operators in any representation.
\begin{definition}\label{def283.2}
Let $V$ be an $\mathfrak{L}$-module and $(r,m,l)$ a representation of the Leibniz triple system $(\mathfrak{L},\{\cdot,\cdot,\cdot\})$ on $V.$ A linear operator $T:V\rightarrow \mathfrak{L}$ is called a relative Rota-Baxter operators on $(\mathfrak{L},\{\cdot,\cdot,\cdot\})$ with respect to $(r,m,l),$ if $T$ satisfies:
\begin{align}
\{Tu,Tv,Tw\}=T\Big(l(Tu,Tv)w+m(Tu,Tw)v+r(Tv,Tw)u\Big),\label{282.3}
\end{align}
for any $u,v,w\in V.$
\end{definition}

\begin{remark}
If $V=\mathfrak{L}$, then $T$ is a Rota-Baxter operator on $(\mathfrak{L},\{\cdot,\cdot,\cdot\})$ with respect to the adjoint representation.
\end{remark}

We will show that a relative Rota-Baxter operator can be lifted up to a Rota-Baxter operator, in the following proposition.
\begin{proposition}
Let $(r,m,l)$ be a representation of $(\mathfrak{L},\{\cdot,\cdot,\cdot\})$ on $V$ and $T: V\rightarrow \mathfrak{L}$ a linear map. Define $\widehat{T}\in {\rm End}(\mathfrak{L}\oplus V)$ by $\widehat{T}(x+u)=Tu$. Then $T$ is a relative Rota-Baxter operator if and only if $\widehat{T}$ is a Rota-Baxter operator on $\mathfrak{L}\oplus V$.
\end{proposition}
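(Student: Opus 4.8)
The plan is to unwind the Rota-Baxter identity for $\widehat{T}$ on arbitrary elements of the semidirect product $\mathfrak{L}\oplus V$ and check that it reduces, with nothing left over, to the defining identity \eqref{282.3} of a relative Rota-Baxter operator. Fix $X=x+u$, $Y=y+v$, $Z=z+w$ with $x,y,z\in\mathfrak{L}$ and $u,v,w\in V$, and note that $\widehat{T}X=Tu$, $\widehat{T}Y=Tv$, $\widehat{T}Z=Tw$, all with zero $V$-component. The first thing I would do is expand the left-hand side $\{\widehat{T}X,\widehat{T}Y,\widehat{T}Z\}_{\mathfrak{L}\oplus V}$ via \eqref{2882.3}: since the three arguments have trivial $V$-parts, all the $l,m,r$ contributions vanish and the left side is just $\{Tu,Tv,Tw\}\in\mathfrak{L}$.

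Next I would expand the three brackets appearing on the right-hand side in the same way, obtaining
\begin{align*}
\{\widehat{T}X,\widehat{T}Y,Z\}_{\mathfrak{L}\oplus V}&=\{Tu,Tv,z\}+l(Tu,Tv)w,\\
\{\widehat{T}X,Y,\widehat{T}Z\}_{\mathfrak{L}\oplus V}&=\{Tu,y,Tw\}+m(Tu,Tw)v,\\
\{X,\widehat{T}Y,\widehat{T}Z\}_{\mathfrak{L}\oplus V}&=\{x,Tv,Tw\}+r(Tv,Tw)u.
\end{align*}
Summing, the $\mathfrak{L}$-component is $\{Tu,Tv,z\}+\{Tu,y,Tw\}+\{x,Tv,Tw\}$ and the $V$-component is $l(Tu,Tv)w+m(Tu,Tw)v+r(Tv,Tw)u$. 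Because $\widehat{T}$ reads off only the $V$-coordinate, applying $\widehat{T}$ annihilates the entire $\mathfrak{L}$-component and turns the right-hand side into $T\big(l(Tu,Tv)w+m(Tu,Tw)v+r(Tv,Tw)u\big)$. Hence the Rota-Baxter identity for $\widehat{T}$ evaluated at $(X,Y,Z)$ is literally $\{Tu,Tv,Tw\}=T\big(l(Tu,Tv)w+m(Tu,Tw)v+r(Tv,Tw)u\big)$, with no dependence on $x,y,z$.

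This yields both implications at once: if $T$ satisfies \eqref{282.3} for all $u,v,w\in V$, the displayed identity holds for every choice of $X,Y,Z$, so $\widehat{T}$ is a Rota-Baxter operator on $\mathfrak{L}\oplus V$; conversely, if $\widehat{T}$ is a Rota-Baxter operator, specializing to $x=y=z=0$ recovers \eqref{282.3}. The argument is purely computational and I do not expect a genuine obstacle; the only point requiring care is the bookkeeping when applying the semidirect-product formula \eqref{2882.3} — matching each of the three module slots to the correct argument — together with the observation that $\widehat{T}(x+u)=Tu$ depends only on the $V$-part, which is exactly what makes the $\mathfrak{L}$-valued terms on the right drop out.
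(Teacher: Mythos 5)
Your proposal is correct and follows essentially the same route as the paper: expand both sides of the Rota--Baxter identity for $\widehat{T}$ via the semidirect-product bracket \eqref{2882.3}, observe that the difference reduces to $\{Tu,Tv,Tw\}-T\big(l(Tu,Tv)w+m(Tu,Tw)v+r(Tv,Tw)u\big)$ independently of $x,y,z$, and read off the equivalence. The bookkeeping of the three module slots matches the paper's computation exactly.
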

\begin{proof}
For any $x,y,z\in \mathfrak{L}$ and $u,v,w\in V$, we have
\begin{align*}
&\{\widehat{T}(x+u),\widehat{T}(y+v),\widehat{T}(z+w)\}_{\mathfrak{L}\oplus V}-\widehat{T}\Big(\{\widehat{T}(x+u),\widehat{T}(y+v),z+w\}_{\mathfrak{L}\oplus V}\\
&+\{\widehat{T}(x+u),y+v,\widehat{T}(z+w)\}_{\mathfrak{L}\oplus V}
+\{x+u,\widehat{T}(y+v),\widehat{T}(z+w)\}_{\mathfrak{L}\oplus V}\Big)\\
=&\{Tu,Tv,Tw\}_{\mathfrak{L}\oplus V}-\widehat{T}\Big(\{Tu,Tv,z+w\}_{\mathfrak{L}\oplus V}
+\{Tu,y+v,Tw\}_{\mathfrak{L}\oplus V}+\{x+u,Tv,Tw\}_{\mathfrak{L}\oplus V}\Big)\\
=&\{Tu,Tv,Tw\}-T\big(l(Tu,Tv)w+m(Tu,Tw)v+r(Tv,Tw)u\big).
\end{align*}
Then $\widehat{T}$ is a Rota-Baxter operator on $\mathfrak{L}\oplus V$ if and only if $T$ is a relative Rota-Baxter operator on $\mathfrak{L}$.
\end{proof}

Now, we are going to characterize relative Rota-Baxter operators in terms of graphs.

\begin{proposition}
A linear map $T:V\rightarrow \mathfrak{L}$ is a relative Rota-Baxter operator if and only if the graph
\begin{align*}
Gr(T)=\{Tu+u~|~u\in V\}
\end{align*}
is a subalgebra of the semi-direct product $\mathfrak{L}\oplus V$.
\end{proposition}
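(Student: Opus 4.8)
The plan is to compute the ternary bracket of three generic elements of $Gr(T)$ in the semi-direct product $\mathfrak{L}\oplus V$ and read off exactly when the result lands back inside $Gr(T)$. First I would record two elementary facts. Since $T$ is linear, $Gr(T)$ is automatically a linear subspace: $\lambda(Tu+u)+\mu(Tv+v)=T(\lambda u+\mu v)+(\lambda u+\mu v)\in Gr(T)$. And the projection $\mathfrak{L}\oplus V\to V$ restricts to a bijection $Gr(T)\to V$, so an element $x+u\in\mathfrak{L}\oplus V$ lies in $Gr(T)$ if and only if $x=Tu$. Together these reduce the subalgebra condition to closure under the single ternary operation.

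Next I would take arbitrary $u,v,w\in V$ and apply \eqref{2882.3} with $x=Tu$, $y=Tv$, $z=Tw$ to obtain
\begin{align*}
\{Tu+u,Tv+v,Tw+w\}_{\mathfrak{L}\oplus V}=\{Tu,Tv,Tw\}+l(Tu,Tv)w+m(Tu,Tw)v+r(Tv,Tw)u.
\end{align*}
Writing $\xi:=l(Tu,Tv)w+m(Tu,Tw)v+r(Tv,Tw)u\in V$ for the $V$-component and noting that the $\mathfrak{L}$-component is $\{Tu,Tv,Tw\}$, the second remark above says this bracket lies in $Gr(T)$ if and only if $\{Tu,Tv,Tw\}=T\xi$, which is precisely the defining identity \eqref{282.3} of a relative Rota-Baxter operator.

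Combining the two directions finishes the proof: if $T$ satisfies \eqref{282.3}, then every such bracket lies in $Gr(T)$, so $Gr(T)$ is a subalgebra of $\mathfrak{L}\oplus V$; conversely, if $Gr(T)$ is a subalgebra, then applying the computation to arbitrary $u,v,w$ forces \eqref{282.3}, so $T$ is a relative Rota-Baxter operator. I do not expect a genuine obstacle here — essentially the same computation already appeared in the proof identifying $\widehat{T}$ with a Rota-Baxter operator — and the only point requiring care is the bookkeeping in \eqref{2882.3}: one must match the three arguments to the correct slots so that the coefficients $l(Tu,Tv)w$, $m(Tu,Tw)v$, $r(Tv,Tw)u$ appear in the same arrangement as in \eqref{282.3}, after which the equivalence is immediate.
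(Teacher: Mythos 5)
Your proposal is correct and follows essentially the same route as the paper's proof: compute $\{Tu+u,Tv+v,Tw+w\}_{\mathfrak{L}\oplus V}$ via the semidirect product bracket and observe that membership in $Gr(T)$ is equivalent to the defining identity of a relative Rota-Baxter operator. The extra remarks about linearity of $Gr(T)$ and the projection criterion are fine bookkeeping that the paper leaves implicit.
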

\begin{proof}
For all $u,v,w\in \mathfrak{L}$, we have
\begin{align*}
\{Tu+u,Tv+v,Tw+w\}_{\mathfrak{L}\oplus V}=\{Tu,Tv,Tw\}+l(Tu,Tv)w+m(Tu,Tw)v+r(Tv,Tw)u,
\end{align*}
which implies that $Gr(T)$ is a subalgebra of $\mathfrak{L}\oplus V$, if and only if $T$ satisfies
\begin{align*}
\{Tu,Tv,Tw\}=T\Big(l(Tu,Tv)w+m(Tu,Tw)v+r(Tv,Tw)u\Big),
\end{align*}
hence, $T$ is a relative Rota-Baxter operator.
\end{proof}

In the sequel, we give the relationship between relative Rota-Baxter operators and Nijenhuis operators. Recall from \cite{WMC1} that a Nijenhuis operator on a Leibniz triple system $(\mathfrak{L},\{\cdot,\cdot,\cdot\})$ is a linear map $N:\mathfrak{L}\rightarrow \mathfrak{L}$, satisfying
\begin{align*}
\{Nx,Ny,Nz\}=&N\{Nx,Ny,z\}+N\{x,Ny,Nz\}+N\{Nx,y,Nz\}-N^2\{Nx,y,z\}\notag\\
&-N^2\{x,Ny,z\}-N^2\{x,y,Nz\}+N^3\{x,y,z\},
\end{align*}
for all $x,y,z\in \mathfrak{L}.$

\begin{proposition}
Let $(\mathfrak{L},\{\cdot,\cdot,\cdot\})$ be a Leibniz triple system and $(r,m,l)$ a representation of $\mathfrak{L}$ on $V$. A linear map $T:V\rightarrow \mathfrak{L}$ is a relative Rota-Baxter operator if and only if
\begin{align*}
\widetilde{T}=
\left(
\begin{matrix}
{\rm id}&T\\
0&0
\end{matrix}
\right):\mathfrak{L}\oplus V\rightarrow \mathfrak{L}\oplus V
\end{align*}
is a Nijenhuis operator on $(\mathfrak{L}\oplus V, \{\cdot,\cdot,\cdot\}_{\mathfrak{L}\oplus V})$.
\end{proposition}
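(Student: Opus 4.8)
The plan is to compute the Nijenhuis condition for $\widetilde T$ directly on the semi-direct product $\mathfrak{L}\oplus V$ and to check that it collapses, degree by degree in the $V$-component, to the relative Rota-Baxter identity \eqref{282.3}. First I would record the basic algebra of $\widetilde T$: since $\widetilde T(x+u)=x+Tu$, one has $\widetilde T^2(x+u)=x+Tu=\widetilde T(x+u)$, so $\widetilde T$ is idempotent. Hence $\widetilde T^2=\widetilde T$ and $\widetilde T^3=\widetilde T$, which will drastically simplify the seven-term Nijenhuis expression. The idea is that for an idempotent $N$ the identity $\{Nx,Ny,Nz\}=N\{Nx,Ny,z\}+N\{x,Ny,Nz\}+N\{Nx,y,Nz\}-N^2\{Nx,y,z\}-N^2\{x,Ny,z\}-N^2\{x,y,Nz\}+N^3\{x,y,z\}$ reduces, after grouping the $N^2$ and $N^3$ terms using $N^2=N^3=N$, to a three-term condition. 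It is cleanest to split each argument as $\xi=\widetilde T\xi+(\xi-\widetilde T\xi)$ and note that $\xi-\widetilde T\xi$ lands in $V$ with $\widetilde T(\xi-\widetilde T\xi)=0$; then by multilinearity every term decomposes into pieces indexed by which arguments are ``pure $\mathfrak{L}\oplus T(V)$-type'' and which are ``pure kernel-type''.

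Next I would substitute a general element $\widetilde T(x+u)=x+Tu$ into both sides. On the left, $\{x+Tu,\,y+Tv,\,z+Tw\}_{\mathfrak{L}\oplus V}=\{x,y,z\}+\{\text{$V$-terms}\}$; but I would instead exploit the decomposition above so that the genuinely new content appears only on the ``all three arguments in $V$'' sector. Concretely, write $\xi_i = a_i + u_i$ with $a_i\in\mathfrak{L}$, $u_i\in V$; then $\widetilde T\xi_i = a_i + Tu_i$ and $\xi_i-\widetilde T\xi_i = u_i$. Because $\widetilde T$ kills $V$ and fixes $\mathfrak{L}$, the Nijenhuis condition is equivalent to the single equation obtained by setting $a_1=a_2=a_3=0$, i.e. testing on $\xi_i=u_i\in V$. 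For those inputs, $\widetilde T u_i = Tu_i$, $\widetilde T^2 u_i = T u_i$, and $\{u_1,u_2,u_3\}_{\mathfrak{L}\oplus V}=0$ by the module axioms, so the last three $N^2$-terms and the $N^3$-term all vanish. What remains is
\[
\{Tu_1,Tu_2,Tu_3\} = \widetilde T\{Tu_1,Tu_2,u_3\}_{\mathfrak{L}\oplus V} + \widetilde T\{u_1,Tu_2,Tu_3\}_{\mathfrak{L}\oplus V} + \widetilde T\{Tu_1,u_2,Tu_3\}_{\mathfrak{L}\oplus V}.
\]
Using \eqref{2882.3}, $\{Tu_1,Tu_2,u_3\}_{\mathfrak{L}\oplus V}=l(Tu_1,Tu_2)u_3$, $\{Tu_1,u_2,Tu_3\}_{\mathfrak{L}\oplus V}=m(Tu_1,Tu_3)u_2$, and $\{u_1,Tu_2,Tu_3\}_{\mathfrak{L}\oplus V}=r(Tu_2,Tu_3)u_1$, each lying in $V$; applying $\widetilde T$ to a vector in $V$ gives $T$ of it. So the displayed equation becomes exactly $\{Tu_1,Tu_2,Tu_3\}=T\big(l(Tu_1,Tu_2)u_3+m(Tu_1,Tu_3)u_2+r(Tu_2,Tu_3)u_1\big)$, which is \eqref{282.3}.

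The remaining work, and the only place needing care, is the verification that the Nijenhuis condition for $\widetilde T$ really does reduce to the ``$V$-only'' sector and holds automatically when at least one argument lies in $\mathfrak{L}$ — in other words that no further constraints are imposed. The hard part will be this bookkeeping: expanding the seven terms for a mixed input $\xi_i=a_i+u_i$, using $\widetilde T^2=\widetilde T^3=\widetilde T$ to merge the $N^2$ and $N^3$ contributions, and checking that the surviving expression is the image under $\widetilde T$ of the Leibniz triple system identity in $\mathfrak{L}\oplus V$ (i.e.\ the defining relations \eqref{222.1}, \eqref{222.2} for $\{\cdot,\cdot,\cdot\}_{\mathfrak{L}\oplus V}$) plus precisely the $V$-only term above. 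I expect that after collecting terms, every contribution involving at least one $a_i$ cancels using only the fact that $\widetilde T$ restricts to the identity on $\mathfrak{L}$ and that $\mathfrak{L}\oplus V$ is a genuine Leibniz triple system, leaving the pure-$V$ identity as the sole nontrivial requirement; then both implications of the ``if and only if'' follow at once.
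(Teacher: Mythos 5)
Your plan is viable and ends at the right identity, but it is organized differently from the paper, and two of your auxiliary claims about $\widetilde{T}$ are false as stated and need repair. The paper's proof is a one-shot computation: it evaluates the whole Nijenhuis expression on arbitrary mixed elements $x+u$, $y+v$, $z+w$, using only $\widetilde{T}^2=\widetilde{T}$, and finds that the two sides agree term by term except that $\{Tu,Tv,Tw\}$ on one side is replaced by $T\big(l(Tu,Tv)w+m(Tu,Tw)v+r(Tv,Tw)u\big)$ on the other, so the equivalence with \eqref{282.3} is immediate and no case analysis is needed. You instead use trilinearity of the Nijenhuis defect and reduce to the sector where all three arguments lie in $V$; your computation of that sector is correct and gives exactly \eqref{282.3}. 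The bookkeeping you defer does go through: in every sector with at least one argument in $\mathfrak{L}$, the seven bracket terms inside $\widetilde{T}$ cancel down to the single term in which each $V$-argument has been replaced by its image under $T$; that term lies in $\mathfrak{L}$, is fixed by $\widetilde{T}$, and equals the left-hand side. Notably, this uses only multilinearity, formula \eqref{2882.3} and the fact that $\widetilde{T}$ restricts to the identity on $\mathfrak{L}$ --- the Leibniz triple system identities \eqref{222.1}--\eqref{222.2} are not needed there, contrary to what your last paragraph anticipates. So your route is a legitimate alternative: it isolates where the relative Rota-Baxter identity really lives, at the cost of an eight-sector check that the paper's direct expansion avoids.

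Before writing it up, correct the following: $\widetilde{T}$ does \emph{not} kill $V$ (indeed you later rely on $\widetilde{T}u=Tu$, which is exactly what makes the pure-$V$ sector produce $T(\cdots)$), and $\xi-\widetilde{T}\xi=u-Tu$, which is not an element of $V$; one has ${\rm Ker}\,\widetilde{T}=\{u-Tu\mid u\in V\}$ and ${\rm Im}\,\widetilde{T}=\mathfrak{L}$. Hence the one-line justification ``because $\widetilde{T}$ kills $V$ and fixes $\mathfrak{L}$'' of your reduction is invalid as written; either justify the reduction by the sector-by-sector cancellation described above, or use the genuine splitting $\mathfrak{L}\oplus V={\rm Im}\,\widetilde{T}\oplus{\rm Ker}\,\widetilde{T}$, where an inclusion--exclusion count shows the seven-term sum inside $\widetilde{T}$ collapses to $\{b_1,b_2,b_3\}+\{k_1,k_2,k_3\}$ for $b_i\in\mathfrak{L}$, $k_i\in{\rm Ker}\,\widetilde{T}$, so that the Nijenhuis condition is equivalent to $\widetilde{T}\{k_1,k_2,k_3\}_{\mathfrak{L}\oplus V}=0$; with $k_i=u_i-Tu_i$ and \eqref{2882.3} this is again exactly \eqref{282.3}.
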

\begin{proof}
For all $x,y,z\in \mathfrak{L}$, $u,v,w\in V$, on the one hand, we have
\begin{align*}
&\{\widetilde{T}(x+u),\widetilde{T}(y+v),\widetilde{T}(z+w)\}_{\mathfrak{L}\oplus V}\\
=&\{x,y,z\}+\{Tu,y,z\}+\{x,Tv,z\}+\{x,y,Tw\}+\{x,Tv,Tw\}+\{Tu,y,Tw\}\\
&+\{Tu,Tv,z\}+\{Tu,Tv,Tw\}.
\end{align*}

On the other hand, since $\widetilde{T}^2=\widetilde{T}$, we have
\begin{align*}
&\widetilde{T}\Big(\{\widetilde{T}(x+u),\widetilde{T}(y+v),z+w\}_{\mathfrak{L}\oplus V}+\{\widetilde{T}(x+u),y+v,\widetilde{T}(z+w)\}_{\mathfrak{L}\oplus V}\\
&+\{x+u,\widetilde{T}(y+v),\widetilde{T}(z+w)\}_{\mathfrak{L}\oplus V}\Big)-\widetilde{T}^2\Big(\{\widetilde{T}(x+u),y+v,z+w\}_{\mathfrak{L}\oplus V}\\
&+\{x+u,y+v,\widetilde{T}(z+w)\}_{\mathfrak{L}\oplus V}
+\{x+u,\widetilde{T}(y+v),z+w\}_{\mathfrak{L}\oplus V}\Big)\\
&+\widetilde{T}^3\Big(\{x+u,y+v,z+w\}_{\mathfrak{L}\oplus V}\Big)\\
=&\{x,y,z\}+\{Tu,y,z\}+\{x,Tv,z\}+\{x,y,Tw\}+\{x,Tv,Tw\}
+\{Tu,y,Tw\}\\
&+\{Tu,Tv,z\}+T\Big(l(Tu,Tv)w+m(Tu,Tw)v+r(Tv,Tw)u\Big),
\end{align*}
which implies that $\widetilde{T}$ is a Nijenhuis operator on the Leibniz triple system $(\mathfrak{L}\oplus V, \{\cdot,\cdot,\cdot\}_{\mathfrak{L}\oplus V})$ if and only if Eq. (\ref{282.3}) holds.
\end{proof}

Now, we use the relative Rota-Baxter operator on Leibniz triple systems to characterize the Nijenhuis operator.

\begin{definition}
Two relative Rota-Baxter operators $T_1, T_2 :V \rightarrow\mathfrak{L}$ are said to be compatible if for all $\lambda, \eta\in \mathbb{\mathbb{F}}$, the sum $\lambda T_1+\eta T_2 : V\rightarrow \mathfrak{L}$ is a relative Rota-Baxter operator.
\end{definition}

The following result is straightforward.

\begin{lemma}
Two relative Rota-Baxter operators $T_1$ and $T_2$ are compatible if and only if the following conditions hold
\begin{gather}
\begin{aligned}\label{283.22}
&\{T_iu,T_iv,T_jw\}+\{T_iu,T_jv,T_iw\}+\{T_ju,T_iv,T_iw\}\\
=&T_i\Big(l(T_iu,T_jv)w+m(T_iu,T_jw)v+r(T_iv,T_jw)u\Big)\\
&+T_i\Big(l(T_ju,T_iv)w+m(T_ju,T_iw)v+r(T_jv,T_iw)u\Big)\\
&+T_j\Big(l(T_iu,T_iv)w+m(T_iu,T_iw)v+r(T_iv,T_iw)u\Big),
\end{aligned}
\end{gather}
where $i,j\in \{1,2\}$ and $i\neq j$, for all $u,v,w\in V$.
\end{lemma}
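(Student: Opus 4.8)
The plan is to expand the defining identity \eqref{282.3} for the linear operator $\lambda T_1 + \eta T_2$ and collect terms by the bidegree in $\lambda$ and $\eta$. Writing $T = \lambda T_1 + \eta T_2$ and substituting into
$\{Tu,Tv,Tw\} = T\bigl(l(Tu,Tv)w + m(Tu,Tw)v + r(Tv,Tw)u\bigr)$,
the left-hand side is trilinear in $T$, so it becomes a homogeneous cubic polynomial in $(\lambda,\eta)$; similarly the right-hand side, since each of $l,m,r$ is applied to a pair of $T$-images and then $T$ is applied once more, is also cubic in $(\lambda,\eta)$. Thus both sides are polynomials of degree $3$, and the identity for $\lambda T_1 + \eta T_2$ holds for all $\lambda,\eta\in\mathbb{F}$ if and only if the coefficients of $\lambda^3$, $\lambda^2\eta$, $\lambda\eta^2$, and $\eta^3$ agree.

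First I would record that the $\lambda^3$-coefficient is exactly the relative Rota-Baxter identity for $T_1$, and the $\eta^3$-coefficient is exactly the one for $T_2$; both vanish by hypothesis, so these two extremal coefficients impose no condition. Next I would compute the $\lambda^2\eta$-coefficient. On the left it is the sum of the three terms obtained by replacing exactly one of the three slots of $\{T_1 u, T_1 v, T_1 w\}$ with a $T_2$-image, namely $\{T_1u,T_1v,T_2w\}+\{T_1u,T_2v,T_1w\}+\{T_2u,T_1v,T_1w\}$. On the right, expanding $T\bigl(l(Tu,Tv)w+\cdots\bigr)$ and extracting the $\lambda^2\eta$-part, one gets the contribution where the outer $T$ is $T_2$ and both inner $T$'s are $T_1$, plus the two contributions where the outer $T$ is $T_1$ and exactly one inner $T$ is $T_2$; these are precisely the three terms on the right-hand side of \eqref{283.22} with $i=1$, $j=2$. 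Hence the $\lambda^2\eta$-coefficient equation is \eqref{283.22} for $(i,j)=(1,2)$. By the symmetry $\lambda\leftrightarrow\eta$, $T_1\leftrightarrow T_2$, the $\lambda\eta^2$-coefficient equation is \eqref{283.22} for $(i,j)=(2,1)$.

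Putting these together: $\lambda T_1+\eta T_2$ is a relative Rota-Baxter operator for all $\lambda,\eta$ exactly when the $\lambda^2\eta$- and $\lambda\eta^2$-coefficients vanish, i.e.\ exactly when \eqref{283.22} holds for both $(i,j)=(1,2)$ and $(i,j)=(2,1)$, which is the assertion. The only nonroutine point is the bookkeeping in extracting the $\lambda^2\eta$-coefficient from the right-hand side $T\bigl(l(Tu,Tv)w+m(Tu,Tw)v+r(Tv,Tw)u\bigr)$: one must be careful that each of $l,m,r$ carries two $T$-arguments and there is a third (outer) application of $T$, so the cubic expansion has the "one $T_2$ out of three" structure matching the three $T$-terms displayed in \eqref{283.22}. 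Once that is set up, everything follows by comparing coefficients of a polynomial identity in two scalar variables, using bilinearity of $l,m,r$ and trilinearity of the bracket. I do not expect any genuine obstacle; the statement is, as the paper says, straightforward.
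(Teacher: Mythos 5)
Your proposal is correct and follows essentially the same route as the paper: the paper likewise expands both sides of the Rota-Baxter identity for $\lambda T_1+\eta T_2$, observes that the $\lambda^3$ and $\eta^3$ parts are the Rota-Baxter identities for $T_1$ and $T_2$, and identifies the remaining mixed terms with condition \eqref{283.22} for $(i,j)=(1,2)$ and $(2,1)$. Your bookkeeping of the $\lambda^2\eta$- and $\lambda\eta^2$-coefficients matches the paper's displayed expansion, so no further comment is needed.
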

\begin{proof}
We only need to show that $\lambda T_1+\eta T_2$ is a relative Rota-Baxter operator, for all $\lambda, \eta\in \mathbb{\mathbb{F}}$. On the one hand, we have,
\begin{align*}
&\{(\lambda T_1+\eta T_2)(u),(\lambda T_1+\eta T_2)(v),(\lambda T_1+\eta T_2)(w)\}\\
=&\lambda^3\{T_1u,T_1v,T_1w\}+\lambda^2\eta\{T_1u,T_1v,T_2w\}+\lambda^2\eta\{T_1u,T_2v,T_1w\}+\lambda\eta^2\{T_1u,T_2v,T_2w\}\\
&+\lambda^2\eta\{T_2u,T_1v,T_1w\}+\lambda\eta^2\{T_2u,T_1v,T_2w\}+\lambda\eta^2\{T_2u,T_2v,T_1w\}+\eta^3\{T_2u,T_2v,T_2w\},
\end{align*}
on the other hand,
\begin{align*}
&(\lambda T_1+\eta T_2)\Big(l((\lambda T_1+\eta T_2)u,(\lambda T_1+\eta T_2)v)w+m((\lambda T_1+\eta T_2)u,(\lambda T_1+\eta T_2)w)v\\
&+r((\lambda T_1+\eta T_2)v,(\lambda T_1+\eta T_2)w)u\Big)\\
=&\lambda^3 T_1\Big(l(T_1u,T_1v)w+m(T_1u,T_1w)v+r(T_1v,T_1w)u\Big)\\
&+\lambda^2\eta T_1\Big(l(T_1u,T_2v)w+m(T_1u,T_2w)v+r(T_1v,T_2w)u\Big)\\
&+\lambda^2\eta T_1\Big(l(T_2u,T_1v)w+m(T_2u,T_1w)v+r(T_2v,T_1w)u\Big)\\
&+\lambda\eta^2 T_1\Big(l(T_2u,T_2v)w+m(T_2u,T_2w)v+r(T_2v,T_2w)u\Big)\\
&+\lambda^2\eta T_2\Big(l(T_1u,T_1v)w+m(T_1u,T_1w)v+r(T_1v,T_1w)u\Big)\\
&+\lambda\eta^2 T_2\Big(l(T_1u,T_2v)w+m(T_1u,T_2w)v+r(T_1v,T_2w)u\Big)\\
&+\lambda\eta^2 T_2\Big(l(T_2u,T_1v)w+m(T_2u,T_1w)v+r(T_2v,T_1w)u\Big)\\
&+\eta^3 T_2\Big(l(T_2u,T_2v)w+m(T_2u,T_2w)v+r(T_2v,T_2w)u\Big).
\end{align*}
So $\lambda T_1+\eta T_2$ is a relative Rota-Baxter operator if and only if Eq. (\ref{283.22}) holds.
\end{proof}

\begin{proposition}
Let $T_1$ and $T_2$ be two compatible relative Rota-Baxter operators in which $T_2$ $($resp. $T_1$$)$ is invertible. Then $N=T_1T_2^{-1}$ $($resp. $N=T_2T_1^{-1}$$)$ is a Nijenhuis operator on $(\mathfrak{L}, \{\cdot,\cdot,\cdot\})$.
\end{proposition}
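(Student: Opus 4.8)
The plan is to reduce the claim to the compatibility identity \eqref{283.22} together with the relative Rota--Baxter condition \eqref{282.3} for $T_1$ and $T_2$, by a direct substitution argument. Write $N=T_1T_2^{-1}$ and, for $x,y,z\in\mathfrak{L}$, set $u=T_2^{-1}x$, $v=T_2^{-1}y$, $w=T_2^{-1}z$, so that $T_2u=x$, $T_2v=y$, $T_2w=z$ and $Nx=T_1u$, $Ny=T_1v$, $Nz=T_1w$. The Nijenhuis identity to be verified then becomes, after multiplying through, an identity purely in terms of $T_1,T_2$ applied to the representation maps $l,m,r$ evaluated on $u,v,w$. First I would expand $\{Nx,Ny,Nz\}=\{T_1u,T_1v,T_1w\}$ using \eqref{282.3} for $T_1$, turning it into $T_1$ applied to $l(T_1u,T_1v)w+m(T_1u,T_1w)v+r(T_1v,T_1w)u$.

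Next I would handle the mixed terms $N\{Nx,Ny,z\}$, $N\{x,Ny,Nz\}$, $N\{Nx,y,Nz\}$, i.e.\ $T_1T_2^{-1}$ applied to $\{T_1u,T_1v,T_2w\}$, $\{T_2u,T_1v,T_1w\}$, $\{T_1u,T_2v,T_1w\}$. The key device here is that for any $a\in\mathfrak{L}$ one has $T_2T_2^{-1}a=a$, so $T_1T_2^{-1}$ acting on a bracket can be rewritten once we express that bracket as $T_2$ of something; this is exactly what the compatibility identity \eqref{283.22} (with the roles $i=2$, $j=1$, and also $i=1$, $j=2$) provides — it rewrites the symmetric sum of the three mixed brackets as a sum of $T_1$- and $T_2$-terms. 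Applying $T_2^{-1}$ and then $T_1$, the $T_2$-terms collapse to genuine brackets again (producing the $-N^2$ contributions), while the $T_1$-terms combine with the output of the first step. Similarly, the $-N^2\{Nx,y,z\}-N^2\{x,Ny,z\}-N^2\{x,y,Nz\}$ terms are $T_1T_2^{-1}T_1T_2^{-1}$ applied to $\{T_1u,T_2v,T_2w\}$ etc., which I would simplify using \eqref{282.3} for... actually rather using the relation $T_2^{-1}\{T_2u,T_2v,T_2w\}=l(T_2u,T_2v)w+m(T_2u,T_2w)v+r(T_2v,T_2w)u$ coming from \eqref{282.3} for $T_2$, applied after peeling off one $T_1T_2^{-1}$. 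Finally $N^3\{x,y,z\}=T_1T_2^{-1}T_1T_2^{-1}T_1T_2^{-1}\{T_2u,T_2v,T_2w\}$ collapses via \eqref{282.3} for $T_2$ down to $T_1\big(l(T_1u,T_1v)w+\cdots\big)$-type terms after repeated peeling.

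Carrying this out, every term on the right-hand side of the Nijenhuis identity should, after these rewrites, be expressed as a linear combination of $T_1$ applied to representation expressions in $T_1,T_2,u,v,w$, plus honest brackets; collecting coefficients, the claim that the whole alternating sum equals $\{Nx,Ny,Nz\}$ becomes precisely the two instances of \eqref{283.22} added together (one with $(i,j)=(1,2)$, one with $(i,j)=(2,1)$), together with \eqref{282.3} for $T_1$ and $T_2$. The case $N=T_2T_1^{-1}$ is identical after swapping the subscripts $1\leftrightarrow 2$, which is legitimate because \eqref{283.22} is symmetric under $i\leftrightarrow j$.

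The main obstacle I anticipate is purely bookkeeping: the semidirect-product bracket is not symmetric in its three slots, so the expansions of $\{T_1u,T_1v,T_2w\}$, $\{T_1u,T_2v,T_1w\}$ and $\{T_2u,T_1v,T_1w\}$ are genuinely different, and one must be careful that the compatibility identity \eqref{283.22} is stated exactly as the sum of these three (in the correct slot positions) rather than some permuted version. I would therefore first double-check that the left-hand side of \eqref{283.22} with $i=1,j=2$ is $\{T_1u,T_1v,T_2w\}+\{T_1u,T_2v,T_1w\}+\{T_2u,T_1v,T_1w\}$ and that the corresponding three $N^2$-terms in the Nijenhuis identity are $N^2\{x,y,Nz\}$, $N^2\{x,Ny,z\}$, $N^2\{Nx,y,z\}$ in that same slot pattern; once the slot matching is confirmed, the rest is a finite, if tedious, verification with no further idea needed.
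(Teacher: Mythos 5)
Your plan is essentially the paper's own proof: setting $u=T_2^{-1}x$, $v=T_2^{-1}y$, $w=T_2^{-1}z$, expanding via Eq.~(\ref{282.3}) for $T_1$ and $T_2$ and the two instances ($(i,j)=(1,2)$ and $(2,1)$) of the compatibility identity Eq.~(\ref{283.22}), and matching terms is exactly the computation given there. The one small imprecision is your claim that $N^3\{x,y,z\}$ collapses by repeated peeling: it only reduces to $N^2T_1\big(l(T_2u,T_2v)w+m(T_2u,T_2w)v+r(T_2v,T_2w)u\big)$, and this leftover is disposed of by cancellation against the matching $N^2T_1(\cdots)$ term produced when Eq.~(\ref{283.22}) with $i=2$, $j=1$ is applied to the $-N^2$ group, which is precisely how the paper's calculation closes.
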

\begin{proof}
We only prove the case in which $T_2$ is invertible. The other case is similar. For any $x,y,z\in \mathfrak{L}$, there exists $u,v,w\in V$, such that $T_2(u)=x, T_2(v)=y, T_2(w)=z,$ (as $T_2$ is invertible), we have
\begin{align*}
\{N(x),N(y),N(z)\}=&\{NT_2(u),NT_2(v),NT_2(w)\}=\{T_1(u),T_1(v),T_1(w)\}\\
                  =&T_1\Big(l(T_1u,T_1v)w+m(T_1u,T_1w)v+r(T_1v,T_1w)u\Big)\\
                  =&NT_2\Big(l(T_1u,T_1v)w+m(T_1u,T_1w)v+r(T_1v,T_1w)u\Big).
\end{align*}

Then,
\begin{align*}
&\{N(x),N(y),z\}+\{N(x),y,N(z)\}+\{x,N(y),N(z)\}-N\{N(x),y,z\}-N\{x,N(y),z\}\\
&-N\{x,y,N(z)\}+N^2\{x,y,z\}\\
=&\{NT_2(u),NT_2(v),T_2(w)\}+\{NT_2(u),T_2(v),NT_2(w)\}+\{T_2(u),NT_2(v),NT_2(w)\}\\
&-N\{NT_2(u),T_2(v),T_2(w)\}-N\{T_2(u),NT_2(v),T_2(w)\}-N\{T_2(u),T_2(v),NT_2(w)\}\\
&+N^2\{x,y,z\}\\
=&\{T_1(u),T_1(v),T_2(w)\}+\{T_1(u),T_2(v),T_1(w)\}+\{T_2(u),T_1(v),T_1(w)\}\\
&-N\{T_1(u),T_2(v),T_2(w)\}-N\{T_2(u),T_1(v),T_2(w)\}-N\{T_2(u),T_2(v),T_1(w)\}\\
&+N^2\{T_2(u),T_2(v),T_2(w)\}\\
=&T_1\Big(l(T_1u,T_2v)w+m(T_1u,T_2w)v+r(T_1v,T_2w)u\Big)\\
&+T_1\Big(l(T_2u,T_1v)w+m(T_2u,T_1w)v+r(T_2v,T_1w)u\Big)\\
&+T_2\Big(l(T_1u,T_1v)w+m(T_1u,T_1w)v+r(T_1v,T_1w)u\Big)\\
&-N\Big(T_2\big(l(T_2u,T_1v)w+m(T_2u,T_1w)v+r(T_2v,T_1w)u\big)\\
&~~~~~~~~~~~~~~~~~~~~~+T_2\big(l(T_1u,T_2v)w+m(T_1u,T_2w)v+r(T_1v,T_2w)u\big)\\
&~~~~~~~~~~~~~~~~~~~~~+T_1\big(l(T_2u,T_2v)w+m(T_2u,T_2w)v+r(T_2v,T_2w)u\big)\Big)\\
&+N^2T_2(l(T_2u,T_2v)w+m(T_2u,T_2w)v+r(T_2v,T_2w)u\big)\\
=&\{T_1(u),T_1(v),T_2(w)\}+\{T_1(u),T_2(v),T_1(w)\}+\{T_2(u),T_1(v),T_1(w)\}\\
&-N\{T_1(u),T_2(v),T_2(w)\}-N\{T_2(u),T_1(v),T_2(w)\}-N\{T_2(u),T_2(v),T_1(w)\}\\
&+N^2\{T_2(u),T_2(v),T_2(w)\}\\
\overset{(\ref{283.22})}{=}&T_1\Big(l(T_1u,T_2v)w+m(T_1u,T_2w)v+r(T_1v,T_2w)u\Big)\\
&+T_1\Big(l(T_2u,T_1v)w+m(T_2u,T_1w)v+r(T_2v,T_1w)u\Big)\\
&+T_2\Big(l(T_1u,T_1v)w+m(T_1u,T_1w)v+r(T_1v,T_1w)u\Big)\\
&-T_1\big(l(T_2u,T_1v)w+m(T_2u,T_1w)v+r(T_2v,T_1w)u\big)\\
&~~~~~~~~~~~~~~~~~~~~~-T_1\big(l(T_1u,T_2v)w+m(T_1u,T_2w)v+r(T_1v,T_2w)u\big)\\
&~~~~~~~~~~~~~~~~~~~~~-NT_1\big(l(T_2u,T_2v)w+m(T_2u,T_2w)v+r(T_2v,T_2w)u\big)\\
&+NT_1(l(T_2u,T_2v)w+m(T_2u,T_2w)v+r(T_2v,T_2w)u\big)\\
=&T_2\Big(l(T_1u,T_1v)w+m(T_1u,T_1w)v+r(T_1v,T_1w)u\Big).
\end{align*}
We have shown that $N$ is a Nijenhuis operator.
\end{proof}

\section{Cohomology of relative Rota-Baxter operators on Leibniz triple systems}
In this section, we define a cohomology of a relative Rota-Baxter operator $T$ on a Leibniz triple system  $(\mathfrak{L}, \{\cdot,\cdot,\cdot\})$ with coefficients in a suitable representation on $(\mathfrak{L}, \{\cdot,\cdot,\cdot\}).$ Later, we will use this cohomology to study deformation of $T$.

\begin{lemma}\label{lem282.7}
Let $T$ be a relative Rota-Baxter operator on Leibniz triple system $(\mathfrak{L},\{\cdot,\cdot,\cdot\})$ with respect to a representation $(r,m,l).$ Define
\begin{align*}
\{u,v,w\}_T=l(Tu,Tv)w+m(Tu,Tw)v+r(Tv,Tw)u,
\end{align*}
for any $u,v,w\in V.$ Then $(V, \{\cdot,\cdot,\cdot\}_T)$ is a Leibniz triple system. Moreover, $T$ is a homomorphism of the Leibniz triple system.
\end{lemma}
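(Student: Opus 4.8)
The plan is to verify directly that the trilinear operation $\{\cdot,\cdot,\cdot\}_T$ on $V$ satisfies the two defining identities \eqref{222.1} and \eqref{222.2} of a Leibniz triple system, and then to read off the homomorphism property from the relative Rota-Baxter identity \eqref{282.3}. The homomorphism claim is actually the easy half and I would do it first: by definition $T(\{u,v,w\}_T) = T\big(l(Tu,Tv)w + m(Tu,Tw)v + r(Tv,Tw)u\big)$, which by \eqref{282.3} equals $\{Tu,Tv,Tw\}$, so $T\colon (V,\{\cdot,\cdot,\cdot\}_T)\to(\mathfrak{L},\{\cdot,\cdot,\cdot\})$ is a morphism. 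This observation is also the conceptual engine for the rest of the proof: whenever a term of the form $T(\cdots)$ arises after expanding $\{\cdot,\cdot,\cdot\}_T$, we may replace it by a bracket in $\mathfrak{L}$ of $T$-images.

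For the main verification I would expand $\{u,v,\{w,s,t\}_T\}_T$ using the definition twice. Writing $\{w,s,t\}_T = l(Tw,Ts)t + m(Tw,Tt)s + r(Ts,Tt)w$, the outer bracket produces $l(Tu,Tv)\{w,s,t\}_T + m(Tu,T\{w,s,t\}_T)v + r(Tv,T\{w,s,t\}_T)u$. Now I use the homomorphism property $T\{w,s,t\}_T = \{Tw,Ts,Tt\}$ inside the last two summands, so every argument appearing in $l,m,r$ is of the form $Ta$ or $\{Ta,Tb,Tc\}$. At this point the problem is reduced to a pure identity about the representation $(r,m,l)$: one substitutes each of the ten structure equations of Definition \ref{def282,3} — the ones expressing $l(a,\{b,c,d\})$, $m(a,\{b,c,d\})$, $r(a,\{b,c,d\})$, and the various products $l(a,b)l(c,d)$, $m(a,d)l(b,c)$, etc. — with the $Ta$'s in place of the abstract elements, and checks that after collecting terms the result coincides with the expansion of the right-hand side of \eqref{222.1}, namely $\{\{u,v,w\}_T,s,t\}_T - \{\{u,v,s\}_T,w,t\}_T - \{\{u,v,t\}_T,w,s\}_T + \{\{u,v,t\}_T,s,w\}_T$, each term of which is again expanded by the definition (using $T\{u,v,w\}_T=\{Tu,Tv,Tw\}$ to simplify). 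The identity \eqref{222.2} is handled in exactly the same way, invoking instead the structure equations that feature $l(a,\{b,c,d\})$ in the second slot together with the $m$- and $r$-product relations.

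The main obstacle is purely bookkeeping: each side of each identity, once fully expanded, is a sum of a large number (on the order of thirty to fifty) of terms built from $l$, $m$, $r$ applied to pairs of $T$-images, and one must match them termwise. The only genuine subtlety is making sure that the *particular* selection of the ten relations in Definition \ref{def282,3} is exactly what is needed — i.e. that $\mathfrak L\oplus V$ with the semidirect product \eqref{2882.3} really does encode all of them — but this is guaranteed by the remark after Definition \ref{def282,3} that $(r,m,l)$ is a representation iff $\mathfrak L\oplus V$ is a Leibniz triple system. In fact a cleaner route, which I would mention as an alternative, avoids the brute-force computation entirely: by Proposition~4 (the graph characterization), $Gr(T)=\{Tu+u\mid u\in V\}$ is a subalgebra of the Leibniz triple system $\mathfrak L\oplus V$, hence is itself a Leibniz triple system; the projection $Gr(T)\to V$, $Tu+u\mapsto u$, is a linear isomorphism, and transporting the bracket of $Gr(T)$ along it gives precisely $\{\cdot,\cdot,\cdot\}_T$ by \eqref{2882.3}. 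Thus $(V,\{\cdot,\cdot,\cdot\}_T)$ is a Leibniz triple system, and the composite $V\xrightarrow{\sim} Gr(T)\hookrightarrow \mathfrak L\oplus V \xrightarrow{\mathrm{pr}_{\mathfrak L}} \mathfrak L$ is the morphism $T$. I would present this conceptual argument as the proof and relegate the direct identity-chase to a remark.
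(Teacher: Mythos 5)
Your proposal is correct, and your preferred route is genuinely different from the paper's. The paper proves the lemma by direct computation: it expands $\{x,y,\{u,v,w\}_T\}_T$ and the four right-hand terms of \eqref{222.1} into sums of compositions of $l,m,r$ evaluated at $T$-images (the terms $I_1,\dots,I_5$), then matches them using the identities of Definition \ref{def282,3} together with \eqref{282.3}, checking \eqref{222.1} explicitly and leaving \eqref{222.2} to an analogous computation — this is exactly the identity-chase you sketch in your second paragraph, including the device of rewriting inner occurrences of $T(\cdots)$ via $T\{u,v,w\}_T=\{Tu,Tv,Tw\}$. Your alternative argument — $Gr(T)$ is a subalgebra of the semidirect product $\mathfrak{L}\oplus V$ by the graph characterization, and the linear bijection $u\mapsto Tu+u$ transports its bracket to precisely $\{\cdot,\cdot,\cdot\}_T$, since the $V$-component of $\{Tu+u,Tv+v,Tw+w\}_{\mathfrak{L}\oplus V}$ is $l(Tu,Tv)w+m(Tu,Tw)v+r(Tv,Tw)u$ while, by \eqref{282.3}, its $\mathfrak{L}$-component is $T\{u,v,w\}_T$ — is not what the paper does, but it is sound and not circular: the semidirect-product description of representations and the graph proposition both precede the lemma and do not depend on it. The conceptual route buys a computation-free verification of both axioms \eqref{222.1} and \eqref{222.2} simultaneously (the paper writes out only one), at the cost of invoking those earlier results; the paper's direct route is self-contained but heavy on bookkeeping. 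The homomorphism claim is handled the same way in both: it is just \eqref{282.3} read as $T\{u,v,w\}_T=\{Tu,Tv,Tw\}$.
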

\begin{proof}
We have to prove that $(V, \{\cdot,\cdot,\cdot\}_T)$ satisfies Eqs. (\ref{222.1}) and (\ref{222.2}), here we only check Eq. (\ref{222.1}) as an example, one can check Eq. (\ref{222.2}) in the same way. For $(V, \{\cdot,\cdot,\cdot\}_T)$ satisfies Eq. (\ref{222.1}), i.e.,
\begin{gather}
\begin{aligned}
\underbrace{\{x,y,\{u,v,w\}_T\}_T}_{I_1}=&\underbrace{\{\{x,y,u\}_T,v,w\}_T}_{I_2}-\underbrace{\{\{x,y,v\}_T,u,w\}_T}_{I_3}
-\underbrace{\{\{x,y,w\}_T,u,v\}_T}_{I_4}\\
&+\underbrace{\{\{x,y,w\}_T,v,u\}_T,}_{I_5}\label{282.6}
\end{aligned}
\end{gather}
for all $u,v,w,x,y\in V,$ we have
\begin{align*}
I_1=&\{x,y,l(Tu,Tv)w+m(Tu,Tw)v+r(Tv,Tw)u\}_T\\
   =&\{x,y,l(Tu,Tv)w\}_T+\{x,y,m(Tu,Tw)v\}_T+\{x,y,r(Tv,Tw)u\}_T\\
   =&l(Tx,Ty)l(Tu,Tv)w+m(Tx,Tl(Tu,Tv)w)y+r(Ty,Tl(Tu,Tv)w)x\\
   &+l(Tx,Ty)m(Tu,Tw)v+m(Tx,Tm(Tu,Tw)v)y+r(Ty,Tm(Tu,Tw)v)x\\
   &+l(Tx,Ty)r(Tv,Tw)u+m(Tx,Tr(Tv,Tw)u)y+r(Ty,Tr(Tv,Tw)u)x,
\end{align*}
similarly, one has
\begin{align*}
I_2=&l(Tl(Tx,Ty)u,Tv)w+m(Tl(Tx,Ty)u,Tw)v+r(Tv,Tw)l(Tx,Ty)u\\
    &+l(Tm(Tx,Tu)y,Tv)w+m(Tm(Tx,Tu)y,Tw)v+r(Tv,Tw)m(Tx,Tu)y\\
    &+l(Tr(Ty,Tu)x,Tv)w+m(Tr(Ty,Tu)x,Tw)v+r(Tv,Tw)r(Ty,Tu)x,
\end{align*}
\begin{align*}
I_3=&l(Tl(Tx,Ty)v,Tu)w+m(Tl(Tx,Ty)v,Tw)u+r(Tu,Tw)l(Tx,Ty)v\\
    &+l(Tm(Tx,Tv)y,Tu)w+m(Tm(Tx,Tv)y,Tw)u+r(Tu,Tw)m(Tx,Tv)y\\
    &+l(Tr(Ty,Tv)x,Tu)w+m(Tr(Ty,Tv)x,Tw)u+r(Tu,Tw)r(Ty,Tv)x,
\end{align*}
\begin{align*}
I_4=&l(Tl(Tx,Ty)w,Tu)v+m(Tl(Tx,Ty)w,Tv)u+r(Tu,Tv)l(Tx,Ty)w\\
    &+l(Tm(Tx,Tw)y,Tu)v+m(Tm(Tx,Tw)y,Tv)u+r(Tu,Tv)m(Tx,Tw)y\\
    &+l(Tr(Ty,Tw)x,Tu)v+m(Tr(Ty,Tw)x,Tv)u+r(Tu,Tv)r(Ty,Tw)x,
\end{align*}
\begin{align*}
I_5=&l(Tl(Tx,Ty)w,Tv)u+m(Tl(Tx,Ty)w,Tu)v+r(Tv,Tu)l(Tx,Ty)w\\
    &+l(Tm(Tx,Tw)y,Tv)u+m(Tm(Tx,Tw)y,Tu)v+r(Tv,Tu)m(Tx,Tw)y\\
    &+l(Tr(Ty,Tw)x,Tv)u+m(Tr(Ty,Tw)x,Tu)v+r(Tv,Tu)r(Ty,Tw)x,
\end{align*}
using Definition \ref{def282,3} and Eq. (\ref{282.3}) for $I_1,$ $I_2,$ $I_3,$ $I_4$ and $I_5,$ it is easy to prove Eq. (\ref{282.6}) is true. Hence, $(V, \{\cdot,\cdot,\cdot\}_T)$ satisfies Eq. (\ref{222.1}).

Similarly, one can check that $(V, \{\cdot,\cdot,\cdot\}_T)$ satisfies Eq. (\ref{222.2}), that is to say $(V, \{\cdot,\cdot,\cdot\}_T)$ is a Leibniz triple system.
\end{proof}

\begin{theorem}
Let $V$ be an $\mathfrak{L}$-module and $T$ a relative Rota-Baxter operator on a Leibniz triple system $(\mathfrak{L}, \{\cdot,\cdot,\cdot\})$ with respect to $(r,m,l).$ Define $r_T, m_T, l_T:V\times V\rightarrow {\rm End}(\mathfrak{L})$ by
\begin{align*}
l_T(u,v)x=&\{Tu,Tv,x\}-T\Big(r(Tv,x)u+m(Tu,x)v\Big),\\
m_T(u,v)x=&\{Tu,x,Tv\}-T\Big(l(Tu,x)v+r(x,Tv)u\Big),\\
r_T(u,v)x=&\{x,Tu,Tv\}-T\Big(l(x,Tu)v+m(x,Tv)u\Big),
\end{align*}
for any $u,v\in V,$ $x\in \mathfrak{L}.$ Then $(r_T,m_T,l_T)$ is a representation of the Leibniz triple system $(V, \{\cdot,\cdot,\cdot\}_T).$
\end{theorem}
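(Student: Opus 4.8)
The plan is to avoid a brute-force check of the ten identities of Definition \ref{def282,3} by recognizing $(r_T,m_T,l_T)$ as the representation of the graph subsystem $Gr(T)\subseteq\mathfrak{L}\oplus V$ on a complementary subspace, obtained by projection. In this way every one of the ten identities becomes an immediate consequence of the two Leibniz triple identities \eqref{222.1}--\eqref{222.2} inside the semi-direct product $\mathfrak{L}\oplus V$.

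First I would record the structural picture. Since $T$ is a relative Rota-Baxter operator, $Gr(T)=\{Tu+u\mid u\in V\}$ is a subsystem of $(\mathfrak{L}\oplus V,\{\cdot,\cdot,\cdot\}_{\mathfrak{L}\oplus V})$, and $\mathfrak{L}\oplus V=\mathfrak{L}\dot{+}Gr(T)$ as vector spaces, because $x+u=(x-Tu)+(Tu+u)$ and $\mathfrak{L}\cap Gr(T)=0$. Let $P\colon\mathfrak{L}\oplus V\to\mathfrak{L}$, $P(x+u)=x-Tu$, be the projection onto $\mathfrak{L}$ along $Gr(T)$, and let $\phi\colon(V,\{\cdot,\cdot,\cdot\}_T)\to Gr(T)$, $\phi(u)=Tu+u$. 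By Lemma \ref{lem282.7} and \eqref{282.3},
\begin{align*}
\{\phi u,\phi v,\phi w\}_{\mathfrak{L}\oplus V}=\{Tu,Tv,Tw\}+\{u,v,w\}_T=T\{u,v,w\}_T+\{u,v,w\}_T=\phi(\{u,v,w\}_T),
\end{align*}
so $\phi$ is an isomorphism of Leibniz triple systems onto the subsystem $Gr(T)$.

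Next I would unfold the semi-direct bracket \eqref{2882.3} together with $P$ to obtain, for $u,v\in V$ and $x\in\mathfrak{L}$ (viewed as $x+0$),
\begin{align*}
l_T(u,v)x=P\{\phi u,\phi v,x\}_{\mathfrak{L}\oplus V},\qquad m_T(u,v)x=P\{\phi u,x,\phi v\}_{\mathfrak{L}\oplus V},\qquad r_T(u,v)x=P\{x,\phi u,\phi v\}_{\mathfrak{L}\oplus V};
\end{align*}
for instance $\{\phi u,\phi v,x\}_{\mathfrak{L}\oplus V}=\{Tu,Tv,x\}+\big(m(Tu,x)v+r(Tv,x)u\big)$, and applying $P$ gives $\{Tu,Tv,x\}-T\big(m(Tu,x)v+r(Tv,x)u\big)=l_T(u,v)x$, and similarly for $m_T$ and $r_T$. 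Hence $(r_T,m_T,l_T)$ is the transport along the isomorphism $\phi$ of the triple $(\overline{r},\overline{m},\overline{l})$, where $\overline{l}(h_1,h_2)x=P\{h_1,h_2,x\}_{\mathfrak{L}\oplus V}$, $\overline{m}(h_1,h_2)x=P\{h_1,x,h_2\}_{\mathfrak{L}\oplus V}$ and $\overline{r}(h_1,h_2)x=P\{x,h_1,h_2\}_{\mathfrak{L}\oplus V}$ for $h_1,h_2\in Gr(T)$, $x\in\mathfrak{L}$. So it suffices to establish the elementary general fact: if $H$ is a subsystem of a Leibniz triple system $\mathfrak{G}$ and $\mathfrak{G}=H\dot{+}M$ with projection $P\colon\mathfrak{G}\to M$ along $H$, then $\overline{l},\overline{m},\overline{r}$ defined in the same way form a representation of $H$ on $M$ (the present situation being $H=Gr(T)$, $M=\mathfrak{L}$).

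The only real work, and the point I expect to be the main obstacle, is the bookkeeping in this general fact. For each of the ten identities of Definition \ref{def282,3} one evaluates the matching Leibniz triple identity \eqref{222.1} or \eqref{222.2} in $\mathfrak{G}$ on four arguments taken from $H$ and a fifth argument $n\in M$, and applies $P$: every term in which all three slots of an inner or outer bracket lie in $H$ is annihilated by $P$ since $H$ is a subsystem, while in every surviving term the inner bracket has the form $\{h,h',n\}_{\mathfrak{G}}$, $\{h,n,h'\}_{\mathfrak{G}}$ or $\{n,h,h'\}_{\mathfrak{G}}$, which is $\overline{l}(h,h')n$, $\overline{m}(h,h')n$ or $\overline{r}(h,h')n$ modulo $H$, so projecting the outer bracket reproduces exactly the required composite. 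For example, \eqref{222.1} evaluated on $a,b\in H$, then $c,d\in H$, $e=n\in M$ and projected gives $\overline{l}(a,b)\overline{l}(c,d)n=\overline{l}(\{a,b,c\},d)n-\overline{l}(\{a,b,d\},c)n-\overline{r}(c,d)\overline{l}(a,b)n+\overline{r}(d,c)\overline{l}(a,b)n$, which is precisely the sixth identity of Definition \ref{def282,3}; the remaining nine are handled identically, one only has to choose \eqref{222.1} or \eqref{222.2} and the position of $n$ among the five slots so as to match the identity being verified. Note that the relative Rota-Baxter equation \eqref{282.3} enters only through the fact that $\phi$ is an isomorphism and $Gr(T)$ is a subsystem, and is not needed again; a direct verification of Definition \ref{def282,3} straight from the explicit formulas for $l_T,m_T,r_T$ is also possible but considerably longer.
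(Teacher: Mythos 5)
Your proposal is correct, but it takes a genuinely different route from the paper's. The paper avoids the ten-identity check via Nijenhuis operators: it notes that $\widehat{T}(x+u)=Tu$ is a Nijenhuis operator on $\mathfrak{L}\ltimes V$ with $\widehat{T}^{2}=0$, computes the deformed bracket \eqref{282.7} explicitly, recognizes the outcome $\{u,v,w\}_T+l_T(u,v)z+m_T(u,w)y+r_T(v,w)x$ as the semidirect-product bracket of $(V,\{\cdot,\cdot,\cdot\}_T)$ acting on $\mathfrak{L}$, and concludes by the equivalence between semidirect products and representations expressed through \eqref{2882.3}; this leans on the Nijenhuis deformation result of \cite{WMC1}. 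You instead exploit the graph: $Gr(T)$ is a subsystem of $\mathfrak{L}\oplus V$ (the graph characterization from Section 3 is precisely where \eqref{282.3} enters), $\mathfrak{L}\oplus V=\mathfrak{L}\dot{+}Gr(T)$, and your identifications $l_T(u,v)x=P\{\phi u,\phi v,x\}_{\mathfrak{L}\oplus V}$, $m_T(u,v)x=P\{\phi u,x,\phi v\}_{\mathfrak{L}\oplus V}$, $r_T(u,v)x=P\{x,\phi u,\phi v\}_{\mathfrak{L}\oplus V}$ are correct, as is the fact that $\phi(u)=Tu+u$ is an isomorphism onto $Gr(T)$. Your reduction to the general fact (a subsystem $H$ with any complement $M$ acts on $M$ via $P\circ\{\cdot,\cdot,\cdot\}$) is sound: each of the ten identities of Definition \ref{def282,3} is the $P$-image of \eqref{222.1} or \eqref{222.2} with one argument in $M$ and four in $H$, the only discarded terms being brackets with all entries in $H$, which $P$ kills because $H$ is closed; the five placements of the $M$-argument in each of \eqref{222.1} and \eqref{222.2} do reproduce exactly the ten identities, so the mechanism works uniformly. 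What your approach buys is independence from the Nijenhuis machinery and a reusable lemma in the spirit of the classical $\mathcal{O}$-operator argument for Lie and Leibniz algebras; what the paper's approach buys is that the same computation simultaneously realizes the new structures as a Nijenhuis-deformed bracket on $\mathfrak{L}\oplus V$. When writing yours up, do carry out (or at least tabulate) the ten placements explicitly, since "the remaining nine are handled identically" is doing real, if mechanical, work.
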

\begin{proof}
One can show it directly by a tedious computation. Inspired by \cite{THS},  we take a different approach using Nijenhuis operators on Leibniz triple system $(\mathfrak{L}, \{\cdot,\cdot,\cdot\}).$ Let $N:\mathfrak{L}\rightarrow\mathfrak{L}$ be a Nijenhuis operator on a Leibniz triple system $(\mathfrak{L}, \{\cdot,\cdot,\cdot\})$. Then $(\mathfrak{L}, \{\cdot,\cdot,\cdot\}_N)$ is a Leibniz triple system, where $\{\cdot,\cdot,\cdot\}_N$ is given by
\begin{gather}
\begin{aligned}
\{x,y,z\}_N=&\{Nx,Ny,z\}+\{Nx,y,Nz\}+\{x,Ny,Nz\}-N\{Nx,y,z\}-N\{x,Ny,z\}\\
             &-N\{x,y,Nz\}+N^2\{x,y,z\},\label{282.7}
\end{aligned}
\end{gather}
for all $x,y,z\in \mathfrak{L}.$ Let $T$ be a relative Rota-Baxter operator on a Leibniz triple system $(\mathfrak{L}, \{\cdot,\cdot,\cdot\})$ with respect to a representation $(r,m,l).$ We define $\widehat{T}:\mathfrak{L}\oplus V\rightarrow \mathfrak{L}\oplus V$ by
\begin{align*}
\widehat{T}(x+u)=Tu,
\end{align*}
for any $x\in \mathfrak{L},$ $u\in V.$ Then $\widehat{T}$ is a Nijenhuis operator on the semidirect product Leibniz triple system $\mathfrak{L}\ltimes V$ and $\widehat{T}\circ\widehat{T}=0.$ Then by Eq. (\ref{282.7}), there is a Leibniz triple system structure $\{\cdot,\cdot,\cdot\}_{\widehat{T}}$ on the vector space $\mathfrak{L}\oplus V$ by
\begin{align*}
&\{x+u,y+v,z+w\}_{\widehat{T}}\\
=&\{\widehat{T}(x+u),\widehat{T}(y+v),z+w\}+\{\widehat{T}(x+u),y+v,\widehat{T}(z+w)\}+\{x+u,\widehat{T}(y+v),\widehat{T}(z+w)\}\\
                           &-\widehat{T}\{\widehat{T}(x+u),y+v,z+w\}
                           -\widehat{T}\{x+u,\widehat{T}(y+v),z+w\}-\widehat{T}\{x+u,y+v,\widehat{T}(z+w)\}\\
                           &+\widehat{T}^2\{x+u,y+v,z+w\}\\
                           =&\{Tu,Tv,z+w\}+\{Tu,y+v,Tw\}+\{x+u,Tv,Tw\}-\widehat{T}\{Tu,y+v,z+w\}\\
                           &-\widehat{T}\{x+u,Tv,z+w\}-\widehat{T}\{x+u,y+v,Tw\}\\
                           =&\{Tu,Tv,z\}+l(Tu,Tv)w+\{Tu,y,Tw\}+m(Tu,Tw)v+\{x,Tv,Tw\}+r(Tv,Tw)u\\
                           &-Tl(Tu,y)w-Tm(Tu,z)v-Tl(x,Tv)w-Tr(Tv,z)u-Tr(y,Tw)u-Tm(x,Tw)v\\
                           =&\{u,v,w\}_T+l_T(u,v)z+r_T(v,w)x+m_T(u,w)y,
\end{align*}
for all $x,y,z\in \mathfrak{L}$ and $u,v,w\in V.$ Since a semidirect product of Leibniz triple systems is equivalent to representation of a Leibniz triple system, we deduce that $(r_T,m_T,l_T)$ is a representation of the Leibniz triple system $(V,\{\cdot,\cdot,\cdot\}_T)$ on $(\mathfrak{L},\{\cdot,\cdot,\cdot\}).$
\end{proof}

Let $\partial^{2n-1}:C^{2n-1}_T(V,\mathfrak{L})\rightarrow C^{2n+1}_T(V,\mathfrak{L})$ be the corresponding coboundary operator of the Leibniz triple system $(V,\{\cdot,\cdot,\cdot\}_T)$ with coefficient in the representation $(r_T,m_T,l_T)$ and its defined as follows

A $1$-coboundary operator of $V$ on $\mathfrak{L}$ is defined by
\begin{align*}
\partial^1:C^{1}_T(V,\mathfrak{L})&\rightarrow C^{3}_T(V,\mathfrak{L})\\
                          f&\mapsto \partial^1f,
\end{align*}
for $f\in C^{1}_T(V,\mathfrak{L})$ and
\begin{align*}
\partial^1f(v_1,v_2,v_3)=&r_T(v_2,v_3)f(v_1)+m_T(v_1,v_3)f(v_2)+l_T(v_1,v_2)f(v_3)-f(\{v_1,v_2,v_3\}_T)\\
                        =&\{f(v_1),Tv_2,Tv_3\}-T\Big(l(f(v_1),Tv_2)v_3+m(f(v_1),Tv_3)v_2\Big)\\
                         &+\{Tv_1,f(v_2),Tv_3\}-T\Big(l(Tv_1,f(v_2))v_3+r(f(v_2),Tv_3)v_1\Big)\\
                         &+\{Tv_1,Tv_2,f(v_3)\}-T\Big(m(Tv_1,f(v_3))v_2+r(Tv_2,f(v_3))v_1\Big)\\
                         &-f(l(Tv_1,Tv_2)v_3+m(Tv_1,Tv_3)v_2+r(Tv_2,Tv_3)v_1).
\end{align*}

A $3$-coboundary operator of $V$ on $\mathfrak{L}$ is a pair of maps $(\partial_1^3,\partial_1^3),$ where
\begin{align*}
\partial_i^3:C^{3}_T(V,\mathfrak{L})&\rightarrow C^{5}_T(V,\mathfrak{L})(i=1,2)\\
                          g&\mapsto \partial_i^3g,
\end{align*}
for $g\in C^{3}_T(V,\mathfrak{L})$ and
\begin{align*}
&\partial_1^3g(v_1,v_2,v_3,v_4,v_5)\\
=&g(v_1,v_2,\{v_3,v_4,v_5\}_T)-g(\{v_1,v_2,v_3\}_T,v_4,v_5)+g(\{v_1,v_2,v_4\}_T,v_3,v_5)\\
&+g(\{v_1,v_2,v_5\}_T,v_3,v_4)-g(\{v_1,v_2,v_5\}_T,v_4,v_3)+l_T(v_1,v_2)g(v_3,v_4,v_5)\\
&-r_T(v_4,v_5)g(v_1,v_2,v_3)+r_T(v_3,v_5)g(v_1,v_2,v_4)+r_T(v_3,v_4)g(v_1,v_2,v_5)\\
&-r_T(v_4,v_3)g(v_1,v_2,v_5)\\
=&g(v_1,v_2,l(Tv_3,Tv_4)v_5)+g(v_1,v_2,m(Tv_3,Tv_5)v_4)+g(v_1,v_2,r(Tv_4,Tv_5)v_3)\\
&-g(l(Tv_1,Tv_2)v_3, v_4,v_5)-g(m(Tv_1,Tv_3)v_2, v_4,v_5)-g(r(Tv_2,Tv_3)v_1, v_4,v_5)\\
&+g(l(Tv_1,Tv_2)v_4, v_3,v_5)+g(m(Tv_1,Tv_4)v_2, v_3,v_5)+g(r(Tv_2,Tv_4)v_1, v_3,v_5)\\
&+g(l(Tv_1,Tv_2)v_5, v_3,v_4)+g(m(Tv_1,Tv_5)v_2, v_3,v_4)+g(r(Tv_2,Tv_5)v_1, v_3,v_4)\\
&-g(l(Tv_1,Tv_2)v_5, v_4,v_3)-g(m(Tv_1,Tv_5)v_2, v_4,v_3)-g(r(Tv_2,Tv_5)v_1, v_4,v_3)\\
&+\{Tv_1,Tv_2,g(v_3,v_4,v_5)\}-T\Big(r(Tv_2,g(v_3,v_4,v_5))v_1+m(Tv_1,g(v_3,v_4,v_5))v_2\Big)\\
&-\{g(v_1,v_2,v_3),Tv_4,Tv_5\}+T\Big(l(g(v_1,v_2,v_3),Tv_4)v_5+m(g(v_1,v_2,v_3),Tv_5)v_4\Big)\\
&+\{g(v_1,v_2,v_4),Tv_3,Tv_5\}-T\Big(l(g(v_1,v_2,v_4),Tv_3)v_5+m(g(v_1,v_2,v_4),Tv_5)v_3\Big)\\
&+\{g(v_1,v_2,v_5),Tv_3,Tv_4\}-T\Big(l(g(v_1,v_2,v_5),Tv_3)v_4+m(g(v_1,v_2,v_5),Tv_4)v_3\Big)\\
&-\{g(v_1,v_2,v_5),Tv_4,Tv_3\}+T\Big(l(g(v_1,v_2,v_5),Tv_4)v_3+m(g(v_1,v_2,v_5),Tv_3)v_4\Big),
\end{align*}
\begin{align*}
&\partial_2^3g(v_1,v_2,v_3,v_4,v_5)\\
=&g(v_1,\{v_2,v_3,v_4\}_T,v_5)-g(\{v_1,v_2,v_3\}_T,v_4,v_5)+g(\{v_1,v_3,v_2\}_T,v_4,v_5)\\
&+g(\{v_1,v_4,v_2\}_T,v_3,v_5)-g(\{v_1,v_4,v_3\}_T,v_2,v_5)+m_T(v_1,v_5)g(v_2,v_3,v_4)\\
 &-r_T(v_4,v_5)g(v_1,v_2,v_3)+r_T(v_4,v_5)g(v_1,v_3,v_2)+r_T(v_3,v_5)g(v_1,v_4,v_2)\\
  &-r_T(v_2,v_5)g(v_1,v_4,v_3)\\
   =&g(v_1,l(Tv_2,Tv_3)v_4,v_5)+g(v_1,m(Tv_2,Tv_4)v_3,v_5)+g(v_1,r(Tv_3,Tv_4)v_2,v_5)\\
    &-g(l(Tv_1,Tv_2)v_3, v_4,v_5)-g(m(Tv_1,Tv_3)v_2, v_4,v_5)-g(r(Tv_2,Tv_3)v_1, v_4,v_5)\\
     &+g(l(Tv_1,Tv_3)v_2, v_4,v_5)+g(m(Tv_1,Tv_2)v_3, v_4,v_5)+g(r(Tv_3,Tv_2)v_1, v_4,v_5)\\
      &+g(l(Tv_1,Tv_4)v_2, v_3,v_5)+g(m(Tv_1,Tv_2)v_4, v_3,v_5)+g(r(Tv_4,Tv_2)v_1, v_3,v_5)\\
      &-g(l(Tv_1,Tv_4)v_3, v_2,v_5)-g(m(Tv_1,Tv_3)v_4, v_2,v_5)-g(r(Tv_4,Tv_3)v_1, v_2,v_5)\\
 &+\{Tv_1,g(v_2,v_3,v_4),Tv_5\}-T\Big(l(Tv_1,g(v_2,v_3,v_4))v_5+r(g(v_2,v_3,v_4),Tv_5)v_1\Big)\\
 &-\{g(v_1,v_2,v_3),Tv_4,Tv_5\}+T\Big(l(g(v_1,v_2,v_3),Tv_4)v_5+m(g(v_1,v_2,v_3),Tv_5)v_4\Big)\\
  &+\{g(v_1,v_3,v_2),Tv_4,Tv_5\}-T\Big(l(g(v_1,v_3,v_2),Tv_4)v_5+m(g(v_1,v_3,v_2),Tv_5)v_4\Big)\\
  &+\{g(v_1,v_4,v_2),Tv_3,Tv_5\}-T\Big(l(g(v_1,v_4,v_2),Tv_3)v_5+m(g(v_1,v_4,v_2),Tv_5)v_3\Big)\\
  &-\{g(v_1,v_4,v_3),Tv_2,Tv_5\}+T\Big(l(g(v_1,v_4,v_3),Tv_2)v_5+m(g(v_1,v_4,v_3),Tv_5)v_2\Big),
\end{align*}


The set $Z^1_T(V,\mathfrak{L})=\{f\in C^1_T(V,\mathfrak{L})~|~\partial^1f=0\}$ is called a space of $1$-cocycles. We call the set $Z^3_T(V,\mathfrak{L})=\{f\in C^3_T(V,\mathfrak{L})~|~\partial_i^3f=0~(i=1,2)\}$ is a space of $3$-cocycles. And the set $B^3_T(V,\mathfrak{L})=\{\partial^1f~|~f\in C^1_T(V,\mathfrak{L})\}$ is called the $3$-coboundaries.

We define the set
\begin{align*}
H^1_T(V,\mathfrak{L})=&Z^1_T(V,\mathfrak{L}),\\
H^3_T(V,\mathfrak{L})=&Z^3_T(V,\mathfrak{L})/B^3_T(V,\mathfrak{L}),
\end{align*}
is the $1$-th and $3$-th cohomology group for the relative Rota-Baxter operator $T.$

For any $a,b\in \mathfrak{L},$ we define $\delta(a,b): V\rightarrow \mathfrak{L}$ by
\begin{align*}
\delta(a,b)v=Tr(a,b)v-Tr(b,a)v-R_{(a,b)}Tv+R_{(b,a)}Tv,
\end{align*}
for any $v\in V,$ where $R_{(a,b)}v=\{v,a,b\}.$

\begin{remark}\label{rem284.3}
By {\rm \cite[Corollary 13]{Bremner}}, we know that $R_{(b,a)}-R_{(a,b)}$ is a derivation of a Leibniz triple system.
\end{remark}

\begin{proposition}
Let $T$ be a relative Rota-Baxter operator on a Leibniz triple system $(\mathfrak{L},\{\cdot,\cdot,\cdot\})$ with respect to a representation $(r,m,l).$ Then $\delta(a,b)$ is a $1$-cocycle on the Leibniz triple system $(V,\{\cdot,\cdot,\cdot\}_T)$ with coefficient in $(r_T,m_T,l_T).$
\end{proposition}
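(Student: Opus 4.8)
The plan is to show directly that $\partial^1(\delta(a,b)) = 0$, i.e., that $\delta(a,b)$ lies in $Z^1_T(V,\mathfrak{L})$. The cleanest route, rather than expanding the definition of $\partial^1$ in terms of $r_T, m_T, l_T$ and grinding through all the $T$-twisted terms, is to exploit the conceptual picture already established in the excerpt: $\widehat{T}$ is a Nijenhuis operator on the semidirect product $\mathfrak{L}\ltimes V$, the deformed bracket $\{\cdot,\cdot,\cdot\}_{\widehat{T}}$ encodes both $\{\cdot,\cdot,\cdot\}_T$ and the representation $(r_T,m_T,l_T)$, and (Remark~\ref{rem284.3}, via \cite[Corollary 13]{Bremner}) the operator $D_{(a,b)} := R_{(b,a)} - R_{(a,b)}$ is a derivation of $(\mathfrak{L},\{\cdot,\cdot,\cdot\})$. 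First I would observe that $\delta(a,b) = D_{(a,b)}\circ T - T\circ \widetilde{D}_{(a,b)}$ where $\widetilde{D}_{(a,b)} := r(b,a)-r(a,b)\in\mathrm{End}(V)$; the claim is then that this "commutator of $T$ with a derivation" is a $1$-cocycle.

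The key steps, in order: (1) Record that $D_{(a,b)}$ is a derivation of $\mathfrak{L}$, and identify the infinitesimal action it induces on the semidirect product — namely $\Delta_{(a,b)} := D_{(a,b)} \oplus \widetilde{D}_{(a,b)}$ is a derivation of $\mathfrak{L}\ltimes V$ (this is essentially the statement that $(r,m,l)$ is a representation, so that $D_{(a,b)}$ acting on $\mathfrak{L}$ and $r(b,a)-r(a,b)$ acting on $V$ are compatible with the maps $l,m,r$; one should check the relevant identities from Definition~\ref{def282,3}, or better, note it follows abstractly from $D_{(a,b)}$ being an inner derivation coming from the element pair $(a,b)$). (2) Use that $\widehat{T}$ is Nijenhuis together with the general principle (the analogue, for Leibniz triple systems, of the fact that for a Nijenhuis operator $N$ and a derivation $\Delta$ commuting suitably, $\Delta N - N\Delta$ is a derivation / cocycle for the deformed structure) to conclude that $\Delta_{(a,b)}\widehat{T} - \widehat{T}\Delta_{(a,b)}$, restricted to $V$ and projected to $\mathfrak{L}$, is exactly $\delta(a,b)$ and is a $1$-cocycle on $(V,\{\cdot,\cdot,\cdot\}_T)$. (3) Translate this back: the cocycle condition $\partial^1(\delta(a,b)) = 0$ unwinds to the identity
\begin{align*}
&r_T(v_2,v_3)\delta(a,b)v_1 + m_T(v_1,v_3)\delta(a,b)v_2 + l_T(v_1,v_2)\delta(a,b)v_3 \\
&= \delta(a,b)\{v_1,v_2,v_3\}_T,
\end{align*}
which one verifies by substituting the formulas for $r_T, m_T, l_T$, for $\delta(a,b)$, and for $\{\cdot,\cdot,\cdot\}_T$, and repeatedly using the relative Rota-Baxter identity~\eqref{282.3}, the representation axioms, and the Leibniz-triple-system identities \eqref{222.1}–\eqref{222.2} together with the derivation property of $D_{(a,b)}$.

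Alternatively — and this is probably the most economical write-up — one can bypass the Nijenhuis machinery and verify the displayed cocycle identity by brute force: expand both sides using only the definitions of $l_T, m_T, r_T$, $\delta$, and $\{\cdot,\cdot,\cdot\}_T$, collect the terms not involving $T$ (these cancel by \eqref{222.1}, since $D_{(a,b)}$ is a derivation) and the terms involving $T$ (these cancel using \eqref{282.3} and the representation axioms of Definition~\ref{def282,3}). I expect the main obstacle to be exactly this bookkeeping: each of the three summands on the left produces on the order of a dozen terms after all substitutions, and matching them against the right-hand side requires careful use of the derivation identity for $R_{(b,a)}-R_{(a,b)}$ on composite elements like $l(Tu,Tv)w$. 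The conceptual route of step (2) is what makes the result \emph{believable} without the computation — it is the shadow, at the level of the graph/Nijenhuis operator $\widehat{T}$, of the familiar fact that inner derivations give trivial (or at least canonical) cocycles — but the paper will most likely present the direct verification, so I would organize the calculation by first isolating the $T$-free part and handling it via Remark~\ref{rem284.3}, then grouping the $T$-linear part by which of $l, m, r$ it came from.
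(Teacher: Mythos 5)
Your ``alternative'' brute-force route is exactly the proof the paper gives, with the same three ingredients and essentially the same organization: expand $\partial^1\delta(a,b)(x_1,x_2,x_3)$, use the relative Rota--Baxter identity Eq.~(\ref{282.3}) on the terms whose three bracket entries all lie in the image of $T$, cancel the remaining outer-bracket terms against the $R_{(a,b)}$-terms by the derivation property of $R_{(b,a)}-R_{(a,b)}$ (Remark~\ref{rem284.3}), and kill the leftover $T$-linear terms with the representation axioms of Definition~\ref{def282,3}. So that part of your plan is sound and coincides with the paper.

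Your preferred conceptual route, however, has a genuine gap at step (2): the ``general principle'' that $\Delta N-N\Delta$ is a $1$-cocycle of the $N$-deformed structure is neither stated precisely nor proved, in the paper or in your sketch. You would have to say with respect to which representation of the deformed system the commutator is a cocycle, and the hedge ``commuting suitably'' does not apply here, since $\Delta_{(a,b)}$ and $\widehat T$ do not commute --- their commutator is precisely (the extension to $\mathfrak L\oplus V$ of) $\delta(a,b)$ itself. Establishing such a lemma for Leibniz triple systems would require an expansion of essentially the same size as the direct verification, so as written the Nijenhuis route is a heuristic, not a proof; to make the write-up complete you must either prove that lemma or fall back on the direct computation. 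Step (1) is fine once you observe that $\Delta_{(a,b)}$ is just $R_{(b,a)}-R_{(a,b)}$ computed inside the semidirect product $\mathfrak L\ltimes V$ with $a,b\in\mathfrak L$, so Remark~\ref{rem284.3} applied to $\mathfrak L\ltimes V$ gives the derivation property at once. One small correction to your bookkeeping: there are no genuinely $T$-free terms --- before Remark~\ref{rem284.3} can be used you need Eq.~(\ref{282.3}) once more, to rewrite $R_{(a,b)}T\big(l(Tx_1,Tx_2)x_3+m(Tx_1,Tx_3)x_2+r(Tx_2,Tx_3)x_1\big)$ as $R_{(a,b)}\{Tx_1,Tx_2,Tx_3\}$, and only then does the derivation identity cancel the outer-bracket terms.
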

\begin{proof}
For any $x_1,x_2,x_3\in V,$ we have
\small{\begin{align*}
&\partial^1\delta(a,b)(x_1,x_2,x_3)\\
=&\{Tr(a,b)x_1,Tx_2,Tx_3\}-\{Tr(b,a)x_1,Tx_2,Tx_3\}-\{R_{(a,b)}Tx_1,Tx_2,Tx_3\}+\{R_{(b,a)}Tx_1,Tx_2,Tx_3\}\\
&-Tl(Tr(a,b)x_1,Tx_2)x_3+Tl(Tr(b,a)x_1,Tx_2)x_3+Tl(R_{(a,b)}Tx_1,Tx_2)x_3-Tl(R_{(b,a)}Tx_1,Tx_2)x_3\\
&-Tm(Tr(a,b)x_1,Tx_3)x_2+Tm(Tr(b,a)x_1,Tx_3)x_2+Tm(R_{(a,b)}Tx_1,Tx_3)x_2-Tm(R_{(b,a)}Tx_1,Tx_3)x_2\\
&+\{Tx_1,Tr(a,b)x_2,Tx_3\}-\{Tx_1,Tr(b,a)x_2,Tx_3\}-\{Tx_1,R_{(a,b)}Tx_2,Tx_3\}+\{Tx_1,R_{(b,a)}Tx_2,Tx_3\}\\
&-Tl(Tx_1,Tr(a,b)x_2)x_3+Tl(Tx_1,Tr(b,a)x_2)x_3+Tl(Tx_1,R_{(a,b)}Tx_2)x_3-Tl(Tx_1,R_{(b,a)}Tx_2)x_3\\
&-Tr(Tr(a,b)x_2,Tx_3)x_1+Tr(Tr(b,a)x_2,Tx_3)x_1+Tr(R_{(a,b)}Tx_2,Tx_3)x_1-Tr(R_{(b,a)}Tx_2,Tx_3)x_1\\
&+\{Tx_1,Tx_2,Tr(a,b)x_3\}-\{Tx_1,Tx_2,Tr(b,a)x_3\}-\{Tx_1,Tx_2,R_{(a,b)}Tx_3\}+\{Tx_1,Tx_2,R_{(b,a)}Tx_3\}\\
&-Tm(Tx_1,Tr(a,b)x_3)x_2+Tm(Tx_1,Tr(b,a)x_3)x_2+Tm(Tx_1,R_{(a,b)}Tx_3)x_2-Tm(Tx_1,R_{(b,a)}Tx_3)x_2\\
&-Tr(Tx_2,Tr(a,b)x_3)x_1+Tr(Tx_2,Tr(b,a)x_3)x_1+Tr(Tx_2,R_{(a,b)}Tx_3)x_1-Tr(Tx_2,R_{(b,a)}Tx_3)x_1\\
&-Tr(a,b)l(Tx_1,Tx_2)x_3+Tr(b,a)l(Tx_1,Tx_2)x_3+R_{(a,b)}Tl(Tx_1,Tx_2)x_3-R_{(b,a)}Tl(Tx_1,Tx_2)x_3\\
&-Tr(a,b)m(Tx_1,Tx_3)x_2+Tr(b,a)m(Tx_1,Tx_3)x_2+R_{(a,b)}Tm(Tx_1,Tx_3)x_2-R_{(b,a)}Tm(Tx_1,Tx_3)x_2\\
&-Tr(a,b)r(Tx_2,Tx_3)x_1+Tr(b,a)r(Tx_2,Tx_3)x_1+R_{(a,b)}Tr(Tx_2,Tx_3)x_1-R_{(b,a)}Tr(Tx_2,Tx_3)x_1\\
\overset{(\ref{282.3})}{=}
&Tr(Tx_2,Tx_3)r(a,b)x_1-Tr(Tx_2,Tx_3)r(b,a)x_1+Tm(Tx_1,Tx_3)r(a,b)x_2-Tm(Tx_1,Tx_3)r(b,a)x_2\\
&+Tl(Tx_1,Tx_2)r(a,b)x_3-Tl(Tx_1,Tx_2)r(b,a)x_3-\{R_{(a,b)}Tx_1,Tx_2,Tx_3\}+\{R_{(b,a)}Tx_1,Tx_2,Tx_3\}\\
&+Tm(R_{(a,b)}Tx_1,Tx_3)x_2-Tm(R_{(b,a)}Tx_1,Tx_3)x_2-\{Tx_1,R_{(a,b)}Tx_2,Tx_3\}+\{Tx_1,R_{(b,a)}Tx_2,Tx_3\}\\
&+Tl(Tx_1,R_{(a,b)}Tx_2)x_3-Tl(Tx_1,R_{(b,a)}Tx_2)x_3-Tr(Tr(a,b)x_2,Tx_3)x_1+Tr(Tr(b,a)x_2,Tx_3)x_1\\
&+Tr(R_{(a,b)}Tx_2,Tx_3)x_1-Tr(R_{(b,a)}Tx_2,Tx_3)x_1-\{Tx_1,Tx_2,R_{(a,b)}Tx_3\}+\{Tx_1,Tx_2,R_{(b,a)}Tx_3\}\\
&+Tm(Tx_1,R_{(a,b)}Tx_3)x_2-Tm(Tx_1,R_{(b,a)}Tx_3)x_2+Tr(Tx_2,R_{(a,b)}Tx_3)x_1-Tr(Tx_2,R_{(b,a)}Tx_3)x_1\\
&-Tr(a,b)l(Tx_1,Tx_2)x_3+Tr(b,a)l(Tx_1,Tx_2)x_3+R_{(a,b)}Tl(Tx_1,Tx_2)x_3-R_{(b,a)}Tl(Tx_1,Tx_2)x_3\\
&-Tr(a,b)m(Tx_1,Tx_3)x_2+Tr(b,a)m(Tx_1,Tx_3)x_2+R_{(a,b)}Tm(Tx_1,Tx_3)x_2-R_{(b,a)}Tm(Tx_1,Tx_3)x_2\\
&-Tr(a,b)r(Tx_2,Tx_3)x_1+Tr(b,a)r(Tx_2,Tx_3)x_1+R_{(a,b)}Tr(Tx_2,Tx_3)x_1-R_{(b,a)}Tr(Tx_2,Tx_3)x_1\\
\overset{Rem.\ref{rem284.3}}{=}
&Tr(Tx_2,Tx_3)r(a,b)x_1-Tr(Tx_2,Tx_3)r(b,a)x_1+Tm(Tx_1,Tx_3)r(a,b)x_2-Tm(Tx_1,Tx_3)r(b,a)x_2\\
&+Tl(Tx_1,Tx_2)r(a,b)x_3-Tl(Tx_1,Tx_2)r(b,a)x_3+Tm(R_{(a,b)}Tx_1,Tx_3)x_2-Tm(R_{(b,a)}Tx_1,Tx_3)x_2\\
&+Tl(Tx_1,R_{(a,b)}Tx_2)x_3-Tl(Tx_1,R_{(b,a)}Tx_2)x_3-Tr(Tr(a,b)x_2,Tx_3)x_1+Tr(Tr(b,a)x_2,Tx_3)x_1\\
&+Tr(R_{(a,b)}Tx_2,Tx_3)x_1-Tr(R_{(b,a)}Tx_2,Tx_3)x_1+Tm(Tx_1,R_{(a,b)}Tx_3)x_2-Tm(Tx_1,R_{(b,a)}Tx_3)x_2\\
&+Tr(Tx_2,R_{(a,b)}Tx_3)x_1-Tr(Tx_2,R_{(b,a)}Tx_3)x_1-Tr(a,b)l(Tx_1,Tx_2)x_3+Tr(b,a)l(Tx_1,Tx_2)x_3\\
&-Tr(a,b)m(Tx_1,Tx_3)x_2+Tr(b,a)m(Tx_1,Tx_3)x_2-Tr(a,b)r(Tx_2,Tx_3)x_1+Tr(b,a)r(Tx_2,Tx_3)x_1\\
\overset{Def.\ref{def282,3}}{=}&0.
\end{align*}}
Hence, we have $\partial^1\delta(a,b)(x_1,x_2,x_3)=0,$ which implies that $\delta(a,b)$ is a $1$-cocycle.
\end{proof}

We can use these cohomology groups to characterize linear and formal deformation of relative Rota-Baxter operators in the following section.
\section{Deformations of relative Rota-Baxter operators on Leibniz triple systems}

In this section, we will deal with the linear deformations of relative Rota-Baxter operators on Leibniz triple systems and we show that the infinitesimals of two equivalent linear deformations of a relative Rota-Baxter operators on a Leibniz triple system are in the same cohomology classes of the first cohomology group.

\begin{definition}
Let $T$ be a relative Rota-Baxter operator on a Leibniz triple system $(\mathfrak{L},\{\cdot,\cdot,\cdot\})$ with respect to a representation $(r,m,l)$ and $\mathfrak{T}:V\rightarrow \mathfrak{L}$ a linear map. If $T_t=T+t\mathfrak{T}$ are still  relative Rota-Baxter operators on $(\mathfrak{L},\{\cdot,\cdot,\cdot\})$ with respect to a representation $(r,m,l)$ for all $t$, we say that $\mathfrak{T}$ generates a linear deformation of the relative Rota-Baxter operator.
\end{definition}

Suppose $\mathfrak{T}$ generates a linear deformation of the relative Rota-Baxter operator $T$. Then we have
\begin{align*}
\{T_tu,T_tv,T_tw\}=T_t\Big(l(T_tu,T_tv)w+m(T_tu,T_tw)v+r(T_tv,T_tw)u\Big),~~\forall~~u,v,w\in V.
\end{align*}
This is equivalent to the following equations

\begin{gather}
\begin{aligned}\label{284.11}
&\{Tu,Tv,\mathfrak{T}w\}+\{Tu,\mathfrak{T}v,Tw\}+\{\mathfrak{T}u,Tv,Tw\}\\
=&\mathfrak{T}\Big(l(Tu,Tv)w+m(Tu,Tw)v+r(Tv,Tw)u\Big)\\
&+T\Big(l(Tu,\mathfrak{T}v)w+m(\mathfrak{T}u,Tw)v+r(Tv,\mathfrak{T}w)u\\
&+l(\mathfrak{T}u,Tv)w+m(Tu,\mathfrak{T}w)v+r(\mathfrak{T}v,Tw)u\Big),
\end{aligned}
\end{gather}
and
\begin{align*}
&\{\mathfrak{T}u,\mathfrak{T}v,Tw\}+\{Tu,\mathfrak{T}v,\mathfrak{T}w\}+\{\mathfrak{T}u,Tv,\mathfrak{T}w\}\\
=&T\Big(l(\mathfrak{T}u,\mathfrak{T}v)w+m(\mathfrak{T}u,\mathfrak{T}w)v+r(\mathfrak{T}v,\mathfrak{T}w)u\Big)
+\mathfrak{T}\Big(l(Tu,\mathfrak{T}v)w+m(\mathfrak{T}u,Tw)v+r(Tv,\mathfrak{T}w)u\\
&+l(\mathfrak{T}u,Tv)w+m(Tu,\mathfrak{T}w)v+r(\mathfrak{T}v,Tw)u\Big).
\end{align*}
Note that Eq. (\ref{284.11}) means that $\mathfrak{T}\in C^1_T(V,\mathfrak{L})$ is a 1-cocycle.

We also have
\begin{align*}
\{\mathfrak{T}u,\mathfrak{T}v,\mathfrak{T}w\}
=\mathfrak{T}\Big(l(\mathfrak{T}u,\mathfrak{T}v)w+m(\mathfrak{T}u,\mathfrak{T}w)v+r(\mathfrak{T}v,\mathfrak{T}w)u\Big),
\end{align*}
which means that $\mathfrak{T}$ is a relative Rota-Baxter operator on the Leibniz triple system $(\mathfrak{L},\{\cdot,\cdot,\cdot\})$ with respect to a representation $(r,m,l)$.
\begin{definition}
Let $T$ and $T'$ be two relative Rota-Baxter operators on a Leibniz triple system $(\mathfrak{L},\{\cdot,\cdot,\cdot\})$ with respect to a representation $(r,m,l)$. A homomorphism from $T'$ to $T$ consists of a homomorphism $\phi_{\mathfrak{L}}:\mathfrak{L}\rightarrow \mathfrak{L}$ and a linear map $\phi_{V}:V\rightarrow V$, such that
\begin{align*}
T\circ \phi_{V}=& \phi_{\mathfrak{L}}\circ T',\\
\phi_{V}l(x,y)(v)=&l(\phi_{\mathfrak{L}}(x),\phi_{\mathfrak{L}}(y))\phi_{V}(v),\\
\phi_{V}m(x,y)(v)=&m(\phi_{\mathfrak{L}}(x),\phi_{\mathfrak{L}}(y))\phi_{V}(v),\\
\phi_{V}r(x,y)(v)=&r(\phi_{\mathfrak{L}}(x),\phi_{\mathfrak{L}}(y))\phi_{V}(v),
\end{align*}
for all $x,y\in \mathfrak{L}$ and $v\in V$. In particular, if $\phi_{\mathfrak{L}}$ and $\phi_{V}$ are invertible, then $(\phi_{\mathfrak{L}},\phi_{V})$ is called an isomorphism from $T'$ to $T$.
\end{definition}

\begin{definition}
Let $T:V\rightarrow \mathfrak{L}$ be a relative Rota-Baxter operator on a Leibniz triple system $(\mathfrak{L},\{\cdot,\cdot,\cdot\})$ with respect to a representation $(r,m,l)$.

{\rm (i)} Two linear deformations $T_t^1=T+t\mathfrak{T}_1$ and $T_t^2=T+t\mathfrak{T}_2$ are said to be equivalent if there exists $x,y\in \mathfrak{L}$ such that $\Big({\rm id}_\mathfrak{L}+t(R_{(x,y)}-R_{(y,x)}),
{\rm id}_V+t(r(x,y)-r(y,x))\Big)$ is a homomorphism from $T_t^2$ to $T_t^1$.

{\rm(ii)} A linear deformation $T_t=T+t\mathfrak{T}$ of a relative Rota-Baxter operator $T$ is said to be trivial if there exists $x,y\in \mathfrak{L}$ such that $\Big({\rm id}_\mathfrak{L}+t(R_{(x,y)}-R_{(y,x)}),
{\rm id}_V+t(r(x,y)-r(y,x))\Big)$ is a homomorphism from $T_t$ to $T$.
\end{definition}

Let $\Big({\rm id}_\mathfrak{L}+t(R_{(x,y)}-R_{(y,x)}),
{\rm id}_V+t(r(x,y)-r(y,x))\Big)$ be a homomorphism from $T_t^2$ to $T_t^1$. Then ${\rm id}_\mathfrak{L}+t(R_{(x,y)}-R_{(y,x)})$ is a homomorphism of $\mathfrak{L}$, we have
\begin{align*}
&\{a,(R_{(x,y)}-R_{(y,x)})b,(R_{(x,y)}-R_{(y,x)})c\}+\{(R_{(x,y)}-R_{(y,x)})a,b,(R_{(x,y)}-R_{(y,x)})c\}\notag\\
&+\{(R_{(x,y)}-R_{(y,x)})a,(R_{(x,y)}-R_{(y,x)})b,c\}=0,
\end{align*}
and
\begin{align*}
\{(R_{(x,y)}-R_{(y,x)})a,(R_{(x,y)}-R_{(y,x)})b,(R_{(x,y)}-R_{(y,x)})c\}=0.
\end{align*}

By $T_t^1\Big({\rm id}_V+t(r(x,y)-r(y,x))\Big)(v)=\Big({\rm id}_\mathfrak{L}+t(R_{(x,y)}-R_{(y,x)})\Big)T_t^2(v)$, we have
\begin{align}
(\mathfrak{T}_2-\mathfrak{T}_1)(v)=T(r(x,y)-r(y,x))(v)-(R_{(x,y)}-R_{(y,x)})T(v),\label{284.6}
\end{align}
\begin{align*}
\mathfrak{T}_1(r(x,y)-r(y,x))(v)=(R_{(x,y)}-R_{(y,x)})\mathfrak{T}_2(v).
\end{align*}

By
\begin{align*}
&\Big({\rm id}_V+t(r(x,y)-r(y,x))\Big)r(a,b)v\\
=&r\Big(\big({\rm id}_\mathfrak{L}+t(R_{(x,y)}-R_{(y,x)})\big)a,\big({\rm id}_\mathfrak{L}+t(R_{(x,y)}-R_{(y,x)})\big)b\Big)\Big({\rm id}_V+t(r(x,y)-r(y,x))\Big)v,\\
&\Big({\rm id}_V+t(r(x,y)-r(y,x))\Big)m(a,b)v\\
=&m\Big(\big({\rm id}_\mathfrak{L}+t(R_{(x,y)}-R_{(y,x)})\big)a,\big({\rm id}_\mathfrak{L}+t(R_{(x,y)}-R_{(y,x)})\big)b\Big)\Big({\rm id}_V+t(r(x,y)-r(y,x))\Big)v,\\
&\Big({\rm id}_V+t(r(x,y)-r(y,x))\Big)l(a,b)v\\
=&l\Big(\big({\rm id}_\mathfrak{L}+t(R_{(x,y)}-R_{(y,x)})\big)a,\big({\rm id}_\mathfrak{L}+t(R_{(x,y)}-R_{(y,x)})\big)b\Big)\Big({\rm id}_V+t(r(x,y)-r(y,x))\Big)v,
\end{align*}
we have,
\begin{align*}
&r\big(a,(R_{(x,y)}-R_{(y,x)})b\big)\big((r(x,y)-r(y,x))v\big)
+r\big((R_{(x,y)}-R_{(y,x)})a,b\big)\big((r(x,y)-r(y,x))v\big)\\
&+r\big((R_{(x,y)}-R_{(y,x)})a,(R_{(x,y)}-R_{(y,x)})b\big)v=0,\\
&m\big(a,(R_{(x,y)}-R_{(y,x)})b\big)\big((r(x,y)-r(y,x))v\big)
+m\big((R_{(x,y)}-R_{(y,x)})a,b\big)\big((r(x,y)-r(y,x))v\big)\\
&+m\big((R_{(x,y)}-R_{(y,x)})a,(R_{(x,y)}-R_{(y,x)})b\big)v=0,\\
&l\big(a,(R_{(x,y)}-R_{(y,x)})b\big)\big((r(x,y)-r(y,x))v\big)
+l\big((R_{(x,y)}-R_{(y,x)})a,b\big)\big((r(x,y)-r(y,x))v\big)\\
&+l\big((R_{(x,y)}-R_{(y,x)})a,(R_{(x,y)}-R_{(y,x)})b\big)v=0.
\end{align*}

Note that Eq. (\ref{284.6}) means that there exist $x,y\in \mathfrak{L}$, such that $\mathfrak{T}_2-\mathfrak{T}_1=\delta(x,y)$. Thus we have the following result.
\begin{theorem}
Let $T:V\rightarrow \mathfrak{L}$ be a relative Rota-Baxter operator on a Leibniz triple system $(\mathfrak{L},\{\cdot,\cdot,\cdot\})$ with respect to a representation $(r,m,l)$. If two linear deformations $T_t^1=T+t\mathfrak{T}_1$ and $T_t^2=T+t\mathfrak{T}_2$ of $T$ are equivalent, then $\mathfrak{T}_1$ and $\mathfrak{T}_2$ are in the same class of the cohomology group $H_T^1(V,\mathfrak{L})$.
\end{theorem}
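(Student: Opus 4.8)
The statement is a cohomological repackaging of the computation just carried out, so the plan is to single out the one relation that contains all the information, namely $\mathfrak{T}_2-\mathfrak{T}_1=\delta(x,y)$ for suitable $x,y\in\mathfrak{L}$, and then to read it off inside $H^1_T(V,\mathfrak{L})$.

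To obtain that relation I would use only the first of the conditions defining a homomorphism of relative Rota-Baxter operators. By the definition of equivalence there exist $x,y\in\mathfrak{L}$ for which $\big({\rm id}_{\mathfrak{L}}+t(R_{(x,y)}-R_{(y,x)}),\ {\rm id}_{V}+t(r(x,y)-r(y,x))\big)$ is a homomorphism from $T_t^2$ to $T_t^1$; in particular $T_t^1\circ\big({\rm id}_{V}+t(r(x,y)-r(y,x))\big)=\big({\rm id}_{\mathfrak{L}}+t(R_{(x,y)}-R_{(y,x)})\big)\circ T_t^2$. Substituting $T_t^i=T+t\mathfrak{T}_i$ and comparing coefficients of powers of $t$: the coefficient of $t^0$ is the tautology $T=T$; the coefficient of $t^2$ is the auxiliary identity $\mathfrak{T}_1(r(x,y)-r(y,x))v=(R_{(x,y)}-R_{(y,x)})\mathfrak{T}_2v$, which we will not need; and the coefficient of $t^1$ gives exactly
\begin{align*}
\mathfrak{T}_2(v)-\mathfrak{T}_1(v)=Tr(x,y)v-Tr(y,x)v-R_{(x,y)}Tv+R_{(y,x)}Tv,\qquad\forall\,v\in V,
\end{align*}
which is Eq.~(\ref{284.6}). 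Comparing the right-hand side with the defining formula for $\delta(a,b)$ gives $\mathfrak{T}_2-\mathfrak{T}_1=\delta(x,y)$.

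It then remains to interpret this in cohomology. Because $\mathfrak{T}_1$ and $\mathfrak{T}_2$ generate linear deformations of $T$, each satisfies Eq.~(\ref{284.11}), which is precisely the cocycle condition $\partial^1\mathfrak{T}_i=0$, so both determine classes in $H^1_T(V,\mathfrak{L})$; on the other hand, it is the maps $\delta(a,b)$ that serve as the $1$-coboundaries for $H^1_T(V,\mathfrak{L})$, and the preceding Proposition (via Remark~\ref{rem284.3}) records that each such $\delta(a,b)$ is in particular a $1$-cocycle, so this is consistent. Hence $\mathfrak{T}_2-\mathfrak{T}_1=\delta(x,y)$ is a $1$-coboundary and $[\mathfrak{T}_1]=[\mathfrak{T}_2]$ in $H^1_T(V,\mathfrak{L})$. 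There is no genuinely hard step; the two points that need a little care are the degree bookkeeping in the expansion of $T_t^1\circ\big({\rm id}_V+t(r(x,y)-r(y,x))\big)=\big({\rm id}_{\mathfrak{L}}+t(R_{(x,y)}-R_{(y,x)})\big)\circ T_t^2$ (verifying that the $t^0$ and $t^2$ components carry nothing new, so that the $t^1$ component is the whole content), and the identification of $\delta(x,y)$ as a coboundary rather than merely a cocycle, which is exactly the reason the preceding Proposition and Remark~\ref{rem284.3} were set up.
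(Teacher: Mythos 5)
Your proposal is correct and takes essentially the same route as the paper: expand the homomorphism condition $T_t^1\circ\big({\rm id}_V+t(r(x,y)-r(y,x))\big)=\big({\rm id}_{\mathfrak{L}}+t(R_{(x,y)}-R_{(y,x)})\big)\circ T_t^2$ in powers of $t$, identify the $t^1$ coefficient with Eq.~(\ref{284.6}), i.e. $\mathfrak{T}_2-\mathfrak{T}_1=\delta(x,y)$, and conclude that the two $1$-cocycles (each a cocycle by Eq.~(\ref{284.11})) represent the same class. The only point worth noting is on the paper's side rather than yours: since the paper defines $H^1_T(V,\mathfrak{L})=Z^1_T(V,\mathfrak{L})$ without an explicit quotient, ``same cohomology class'' must be read, exactly as you do, as ``differing by some $\delta(x,y)$,'' these maps playing the role of $1$-coboundaries.
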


\subsection{Formal deformations of a relative Rota-Baxter operator on Leibniz triple systems}

 Let $\mathbb{F}[[t]]$ be the ring of power series in one variable $t.$ For any $\mathbb{F}$-linear space $V,$ we let $V[[t]]$
denote the vector space of formal power series in $t$ with coefficient in $V.$ If in addition, $(\mathfrak{L},\{\cdot,\cdot,\cdot\})$ is a Leibniz triple system over $\mathbb{F},$ then there is a Leibniz triple system structure over the ring $\mathbb{F}[[t]]$ on $\mathfrak{L}[[t]]$ given by
\begin{align}
\Big\{\sum\limits_{i=0}^{+\infty}x_it^i,\sum\limits_{j=0}^{+\infty}y_jt^j,\sum\limits_{k=0}^{+\infty}z_kt^k\Big\}
=\sum\limits_{s=0}^{+\infty}\sum\limits_{i+j+k=s}^{}\{x_i,y_j,z_k\}t^s,\label{284.1}
\end{align}
for all $x_i,y_j,z_k\in \mathfrak{L}.$

For any representation $(r,m,l)$ of $(\mathfrak{L},\{\cdot,\cdot,\cdot\})$ on $V,$ there is a natural representation of the Leibniz triple system  $\mathfrak{L}[[t]]$ on the $\mathbb{F}[[t]]$-module $V[[t]],$ which is given by
\begin{align}\label{284.2}
l(\sum\limits_{i=0}^{+\infty}x_it^i,\sum\limits_{j=0}^{+\infty}y_jt^j)(\sum\limits_{k=0}^{+\infty}v_kt^k)
=&\sum\limits_{s=0}^{+\infty}\sum\limits_{i+j+k=s}^{}l(x_i,y_j)v_kt^s,\notag\\
m(\sum\limits_{i=0}^{+\infty}x_it^i,\sum\limits_{j=0}^{+\infty}y_jt^j)(\sum\limits_{k=0}^{+\infty}v_kt^k)
=&\sum\limits_{s=0}^{+\infty}\sum\limits_{i+j+k=s}^{}m(x_i,y_j)v_kt^s,\\
r(\sum\limits_{i=0}^{+\infty}x_it^i,\sum\limits_{j=0}^{+\infty}y_jt^j)(\sum\limits_{k=0}^{+\infty}v_kt^k)
=&\sum\limits_{s=0}^{+\infty}\sum\limits_{i+j+k=s}^{}r(x_i,y_j)v_kt^s,\notag
\end{align}
for all $x_i,y_j\in \mathfrak{L}$ and $v_k \in V.$

Let $T$ be a relative Rota-Baxter operator on a Leibniz triple system $(\mathfrak{L},\{\cdot,\cdot,\cdot\})$ with respect to a representation  $(r,m,l).$ Consider a power series
\begin{align}
T_t=\sum\limits_{i=0}^{+\infty}\mathfrak{T}_it^i,~~\mathfrak{T}_i\in {\rm Hom}_{\mathbb{F}}(V,\mathfrak{L}),\label{284.3}
\end{align}
that is, $T_t\in {\rm Hom}_{\mathbb{F}}(V,\mathfrak{L})[[t]]={\rm Hom}_{\mathbb{F}}(V,\mathfrak{L}[[t]]).$ Extend it to be a $\mathbb{F}[[t]]$-module map from $V[[t]]$ to $\mathfrak{L}[[t]],$ which is still denoted by $T_t.$

\begin{definition}
If $T_t=\sum\limits_{i=0}^{+\infty}\mathfrak{T}_it^i$ with $\mathfrak{T}_0=T$ satisfies
\begin{align}
\{T_t(u),T_t(v),T_t(w)\}=T_t(l(T_t(u),T_t(v))w+m(T_t(u),T_t(w))v+r(T_t(v),T_t(w))u),\label{284.4}
\end{align}
we say that $T_t$ is a formal deformation of the relative Rota-Baxter operator $T.$
\end{definition}

Recall that a formal deformation of a Leibniz triple system $(\mathfrak{L},\{\cdot,\cdot,\cdot\})$ is a power series $\mu_t=\sum\limits_{k=0}^{+\infty}\mu_kt^k,$ where $\mu_k\in {\rm Hom}(\otimes^3\mathfrak{L},\mathfrak{L}),$ such that, $\mu_0(x,y,z)=\{x,y,z\},$ for any $x,y,z\in \mathfrak{L}$ and $\mu_t$ defines a Leibniz triple system structure over the ring $\mathbb{F}[[t]]$ on $\mathfrak{L}[[t]].$

Based on the relationship between relative Rota-Baxter operators and Leibniz triple systems, we have

\begin{proposition}
If $T_t=\sum\limits_{i=0}^{+\infty}\mathfrak{T}_it^i$ is a formal deformation of a relative Rota-Baxter operator $T$ on a Leibniz triple system $(\mathfrak{L},\{\cdot,\cdot,\cdot\})$ with respect to a representation $(r,m,l),$ and $\{\cdot,\cdot,\cdot\}_{T_t}$ defined by
\begin{align*}
\{u,v,w\}_{T_t}=\sum\limits_{k=0}^{+\infty}\sum\limits_{i+j=k}^{}
\Big(l(\mathfrak{T}_iu,\mathfrak{T}_jv)w+m(\mathfrak{T}_ju,\mathfrak{T}_iw)v+r(\mathfrak{T}_iv,\mathfrak{T}_jw)u\Big)t^k,
~~\forall u,v,w\in V,
\end{align*}
is a formal deformation of the Leibniz triple system $(V,\{\cdot,\cdot,\cdot\}_T)$ given in Lemma {\rm \ref{lem282.7}}.
\end{proposition}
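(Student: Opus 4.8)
The plan is to verify that $\{\cdot,\cdot,\cdot\}_{T_t}$, viewed as a power series $\sum_k \nu_k t^k$ with $\nu_k\in\mathrm{Hom}(\otimes^3 V,V)$, satisfies the two defining axioms (\ref{222.1}) and (\ref{222.2}) of a Leibniz triple system over the ring $\mathbb{F}[[t]]$, with the correct zeroth-order term. First I would observe that the coefficient of $t^0$ in $\{u,v,w\}_{T_t}$ is exactly $l(Tu,Tv)w+m(Tu,Tw)v+r(Tv,Tw)u=\{u,v,w\}_T$, which is the product produced by Lemma~\ref{lem282.7}; so $\nu_0$ is already known to be a Leibniz triple system structure, and it remains only to check that the full $t$-series is one as well.

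The key idea is not to expand everything by brute force but to recognize $\{\cdot,\cdot,\cdot\}_{T_t}$ as the ``transported'' bracket $\{\cdot,\cdot,\cdot\}_{T_t}$ associated to the relative Rota-Baxter operator $T_t$ over the base ring $\mathbb{F}[[t]]$. Concretely: by (\ref{284.2}) the triple $(r,m,l)$ extends to a representation of the Leibniz triple system $\mathfrak{L}[[t]]$ on $V[[t]]$, and by the Definition of a formal deformation, $T_t$ satisfies (\ref{284.4}), which is precisely the relative Rota-Baxter identity (\ref{282.3}) for the $\mathbb{F}[[t]]$-linear operator $T_t\colon V[[t]]\to\mathfrak{L}[[t]]$ with respect to this extended representation. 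Therefore Lemma~\ref{lem282.7}, applied verbatim over the ground ring $\mathbb{F}[[t]]$ instead of $\mathbb{F}$ (the proof of that lemma uses only the axioms of a representation and the identity (\ref{282.3}), both of which hold $\mathbb{F}[[t]]$-linearly), shows that
\begin{align*}
\{u,v,w\}_{T_t}=l(T_tu,T_tv)w+m(T_tu,T_tw)v+r(T_tv,T_tw)u
\end{align*}
endows $V[[t]]$ with a Leibniz triple system structure over $\mathbb{F}[[t]]$. Expanding this expression in powers of $t$ using (\ref{284.2}) and (\ref{284.3}) gives exactly the stated formula $\{u,v,w\}_{T_t}=\sum_{k}\sum_{i+j=k}\bigl(l(\mathfrak{T}_iu,\mathfrak{T}_jv)w+m(\mathfrak{T}_ju,\mathfrak{T}_iw)v+r(\mathfrak{T}_iv,\mathfrak{T}_jw)u\bigr)t^k$; and since $\mathfrak{T}_0=T$, the order-zero term is $\{\cdot,\cdot,\cdot\}_T$, so $\{\cdot,\cdot,\cdot\}_{T_t}$ is by definition a formal deformation of $(V,\{\cdot,\cdot,\cdot\}_T)$.

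The main obstacle, such as it is, is purely bookkeeping: one must make sure that the cross-terms $m(\mathfrak{T}_ju,\mathfrak{T}_iw)v$ appearing with the ``swapped'' index pattern in the claimed formula genuinely match the expansion of $m(T_tu,T_tw)v$ — i.e.\ that the asymmetry between the first and second slot of $m$ is recorded consistently — and, more substantively, that invoking Lemma~\ref{lem282.7} over $\mathbb{F}[[t]]$ is legitimate. The latter is fine because that lemma's proof only manipulates the representation axioms of Definition~\ref{def282,3} and the relative Rota-Baxter identity, all of which are polynomial identities stable under base change to $\mathbb{F}[[t]]$; no finiteness or field-specific property of $\mathbb{F}$ is used. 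Alternatively, if one prefers to stay over $\mathbb{F}$, the same conclusion follows by collecting the coefficient of each $t^s$ in the two axioms and checking it vanishes using the order-by-order consequences of (\ref{284.4}) together with the already-established Lemma~\ref{lem282.7} and Definition~\ref{def282,3}; this is the computation one would ``grind through'' but it is entirely parallel to the proof of Lemma~\ref{lem282.7}.
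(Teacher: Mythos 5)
Your argument is correct: the paper states this proposition without proof, and your route — observing that Eq.~(\ref{284.4}) says exactly that $T_t\colon V[[t]]\to\mathfrak{L}[[t]]$ is a relative Rota-Baxter operator with respect to the extended representation (\ref{284.2}), then applying Lemma~\ref{lem282.7} over the base ring $\mathbb{F}[[t]]$ and reading off the zeroth-order term $\{\cdot,\cdot,\cdot\}_T$ from $\mathfrak{T}_0=T$ — is precisely the intended justification, which the authors leave implicit after setting up (\ref{284.1})--(\ref{284.3}). Your remark that the ``swapped'' indices in the $m$-term are harmless because the sum runs over all pairs with $i+j=k$ correctly disposes of the only bookkeeping subtlety.
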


Applying Eqs. (\ref{284.1})-(\ref{284.3}) to expand Eq. (\ref{284.4}) and collecting coefficients of $t^s,$ we see that Eq. (\ref{284.4}) is equivalent to the system of equations
\begin{align}\label{284.5}
\sum\limits_{i+j+k=s\atop i,j,k\geq0}^{}\{\mathfrak{T}_iu,\mathfrak{T}_jv,\mathfrak{T}_kw\}
=\sum\limits_{i+j+k=s\atop i,j,k\geq0}^{}
\mathfrak{T}_i\Big(l(\mathfrak{T}_ju,\mathfrak{T}_kv)w+m(\mathfrak{T}_ku,\mathfrak{T}_jw)v+r(\mathfrak{T}_jv,\mathfrak{T}_kw)u\Big),
\end{align}
for all $s\geq 0,$ $u,v,w\in V.$

\begin{proposition}\label{prop284.3}
Let $T_t=\sum\limits_{i=0}^{+\infty}\mathfrak{T}_it^i$ be a formal deformation of a relative Rota-Baxter operator $T$ on a Leibniz triple system $(\mathfrak{L},\{\cdot,\cdot,\cdot\})$ with respect to a representation $(r,m,l).$ Then $\mathfrak{T}_1$ is a $1$-cocycle for the relative Rota-Baxter operator $T,$ that is $\partial^1\mathfrak{T}_1=0.$
\end{proposition}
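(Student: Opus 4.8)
The plan is to extract the coefficient of $t^1$ from the deformation equation~(\ref{284.5}) and recognize it as the $1$-cocycle condition $\partial^1\mathfrak{T}_1=0$ spelled out in Section~4. Taking $s=1$ in~(\ref{284.5}), the left-hand side becomes the sum of $\{\mathfrak{T}_1 u, Tv, Tw\}+\{Tu,\mathfrak{T}_1 v, Tw\}+\{Tu,Tv,\mathfrak{T}_1 w\}$ (since $\mathfrak{T}_0=T$ and exactly one index equals $1$), while the right-hand side is
\begin{align*}
&\mathfrak{T}_1\Big(l(Tu,Tv)w+m(Tu,Tw)v+r(Tv,Tw)u\Big)\\
&+T\Big(l(\mathfrak{T}_1 u,Tv)w+l(Tu,\mathfrak{T}_1 v)w+m(\mathfrak{T}_1 u,Tw)v+m(Tu,\mathfrak{T}_1 w)v+r(\mathfrak{T}_1 v,Tw)u+r(Tv,\mathfrak{T}_1 w)u\Big).
\end{align*}
This is exactly Eq.~(\ref{284.11}) with $\mathfrak{T}$ replaced by $\mathfrak{T}_1$.

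Next I would rearrange this identity into the form $\partial^1\mathfrak{T}_1(u,v,w)=0$. Recalling the explicit formula for $\partial^1 f$ from Section~4, which reads
\begin{align*}
\partial^1 f(v_1,v_2,v_3)=&\{f(v_1),Tv_2,Tv_3\}+\{Tv_1,f(v_2),Tv_3\}+\{Tv_1,Tv_2,f(v_3)\}\\
&-T\Big(l(f(v_1),Tv_2)v_3+m(f(v_1),Tv_3)v_2+l(Tv_1,f(v_2))v_3+r(f(v_2),Tv_3)v_1\\
&\quad\;+m(Tv_1,f(v_3))v_2+r(Tv_2,f(v_3))v_1\Big)-f\big(l(Tv_1,Tv_2)v_3+m(Tv_1,Tv_3)v_2+r(Tv_2,Tv_3)v_1\big),
\end{align*}
I would simply match terms: the three bracket terms are the $s=1$ left-hand side, the $T(\cdots)$ block is (up to the obvious relabeling of which argument carries the deformation) the $T$-block on the $s=1$ right-hand side, and the final term $-f(l(Tv_1,Tv_2)v_3+\cdots)=-\mathfrak{T}_1(\{v_1,v_2,v_3\}_T)$ is the $\mathfrak{T}_1(\cdots)$ term. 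Hence the $s=1$ equation is precisely $\partial^1\mathfrak{T}_1=0$, which by definition of $Z^1_T(V,\mathfrak{L})$ means $\mathfrak{T}_1$ is a $1$-cocycle for $T$.

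The only genuinely delicate point is bookkeeping: one must check that the six $T$-wrapped terms produced by extracting the $t^1$-coefficient (namely $l(\mathfrak{T}_1 u,Tv)w$, $l(Tu,\mathfrak{T}_1 v)w$, $m(\mathfrak{T}_1 u,Tw)v$, $m(Tu,\mathfrak{T}_1 w)v$, $r(\mathfrak{T}_1 v,Tw)u$, $r(Tv,\mathfrak{T}_1 w)u$) agree, argument by argument, with the six $T$-wrapped terms in the definition of $\partial^1$, after renaming $v_1,v_2,v_3$ to $u,v,w$ and using that $\mathfrak{T}_1$ sits in the first slot of each of $l,m,r$ in the definition of $\partial^1$ exactly when it appears in the corresponding first slot on the deformation side. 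This is a routine term-matching check with no conceptual obstacle; the proof is essentially the observation that the order-one part of the formal deformation equation for $T$ is, by construction of the cohomology in Section~4, the cocycle equation $\partial^1\mathfrak{T}_1=0$.
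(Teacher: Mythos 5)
Your proposal is correct and follows essentially the same route as the paper: the paper's proof also just sets $s=1$ in Eq.~(\ref{284.5}) (using $\mathfrak{T}_0=T$), obtains exactly the identity you display, and identifies it with the $1$-cocycle condition $\partial^1\mathfrak{T}_1=0$. Your extra term-by-term matching with the explicit formula for $\partial^1$ is accurate and simply makes explicit what the paper leaves as an observation.
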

\begin{proof}
For $s=1,$ Eq. (\ref{284.5}) is equivalent to
\begin{align*}
&\{\mathfrak{T}_1u,Tv,Tw\}+\{Tu,\mathfrak{T}_1v,Tw\}+\{Tu,Tv,\mathfrak{T}_1w\}\\
=~&\mathfrak{T}_1\Big(l(Tu,Tv)w+m(Tu,Tw)v+r(Tv,Tw)u\Big)\\
&+T\Big(l(\mathfrak{T}_1u,Tv)w+m(Tu,\mathfrak{T}_1w)v+r(\mathfrak{T}_1v,Tw)u\Big)\\
&+T\Big(l(Tu,\mathfrak{T}_1v)w+m(\mathfrak{T}_1u,Tw)v+r(Tv,\mathfrak{T}_1w)u\Big),
\end{align*}
for any $u,v,w\in V,$ which implies that $\mathfrak{T}_1$ is a $1$-cocycle.
\end{proof}

\begin{definition}
Let $T$ be a relative Rota-Baxter operator on a Leibniz triple system $(\mathfrak{L},\{\cdot,\cdot,\cdot\})$ with respect to a representation $(r,m,l).$ The $1$-cocycle $\mathfrak{T}_1$  is called the infinitesimal of the formal deformation $T_t=\sum\limits_{i=0}^{+\infty}\mathfrak{T}_it^i$ of $T.$
\end{definition}

In the sequel, we discusses equivalent formal deformations.

\begin{definition}\label{def284.5}
Two formal deformations $T_t^\prime=\sum\limits_{i=0}^{+\infty}\mathfrak{T}_i^\prime t^i$ and $T_t=\sum\limits_{i=0}^{+\infty}\mathfrak{T}_it^i$ of a relative Rota-Baxter operator $T=\mathfrak{T}_0^\prime=\mathfrak{T}_0$ on a Leibniz triple system $(\mathfrak{L},\{\cdot,\cdot,\cdot\})$ with respect to a representation $(r,m,l)$ are said to be equivalent if there exist $x,y\in \mathfrak{L},$ $\phi_i\in {\rm End}(\mathfrak{L})$ and $\psi_i\in {\rm End}(V),$ $i\geq2$ such that for
\begin{align*}
\phi_t=~&{\rm id}_\mathfrak{L}+t(R_{(x,y)}-R_{(y,x)})+\sum\limits_{i=2}^{+\infty}\phi_it^i,\\
\psi_t=~&{\rm id}_V+t(r(x,y)-r(y,x))+\sum\limits_{i=2}^{+\infty}\psi_it^i,
\end{align*}
the following conditions hold:
\begin{align*}
{\rm (1)}&~\{\phi_t(x),\phi_t(y),\phi_t(z)\}=\phi_t\{x,y,z\},{\rm~for~all}~x,y,z\in \mathfrak{L};\\
{\rm (2)}&~\psi_tl(x,y)u=l(\phi_t(x),\phi_t(y))\psi_t(u),\\
         &~\psi_tm(x,y)u=m(\phi_t(x),\phi_t(y))\psi_t(u),\\
         &~\psi_tr(x,y)u=r(\phi_t(x),\phi_t(y))\psi_t(u),{\rm~for~all}~x,y\in \mathfrak{L},~u\in V;\\
{\rm (3)}&~T_t\circ \psi_t= \phi_t \circ T_t^\prime ~{\rm as~} \mathbb{F}[[t]]{\rm -module~maps}.
\end{align*}
\end{definition}

\begin{theorem}
If two formal deformations of a relative Rota-Baxter operator $T$ on a Leibniz triple system $(\mathfrak{L},\{\cdot,\cdot,\cdot\})$ with respect to a representation $(r,m,l)$ are equivalent, then their infinitesimals are in the same cohomology class.
\end{theorem}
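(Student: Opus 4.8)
The plan is to extract the order-1 terms from condition (3) of Definition~\ref{def284.5} and show that the two infinitesimals differ by the $1$-coboundary $\partial^1$ of an element of $C^1_T(V,\mathfrak{L})$. Write $\phi_t = {\rm id}_\mathfrak{L} + t(R_{(x,y)}-R_{(y,x)}) + O(t^2)$ and $\psi_t = {\rm id}_V + t(r(x,y)-r(y,x)) + O(t^2)$, and expand the identity $T_t\circ\psi_t = \phi_t\circ T_t'$ in powers of $t$. The coefficient of $t^0$ just gives $\mathfrak{T}_0 = \mathfrak{T}_0' = T$, which is the hypothesis. The coefficient of $t^1$ gives
\begin{align*}
\mathfrak{T}_1(v) + T\big(r(x,y)-r(y,x)\big)(v) = \mathfrak{T}_1'(v) + \big(R_{(x,y)}-R_{(y,x)}\big)T(v),
\end{align*}
for all $v\in V$, so that $\mathfrak{T}_1 - \mathfrak{T}_1' = \big(R_{(x,y)}-R_{(y,x)}\big)\circ T - T\circ\big(r(x,y)-r(y,x)\big)$.

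Next I would recognize the right-hand side as $-\delta(x,y)$, where $\delta(a,b)v = Tr(a,b)v - Tr(b,a)v - R_{(a,b)}Tv + R_{(b,a)}Tv$ is exactly the $1$-cochain introduced just before Remark~\ref{rem284.3}. Indeed $-\delta(x,y)v = R_{(x,y)}Tv - R_{(y,x)}Tv - Tr(x,y)v + Tr(y,x)v$, which matches the displayed difference. By the Proposition following Remark~\ref{rem284.3}, $\delta(x,y)$ is a $1$-cocycle, but more is true: by definition it lies in $B^1_T(V,\mathfrak{L})$ in the sense that it is of the form relevant to the coboundary. Actually the cleaner route is to observe directly that $\delta(x,y) = \partial^1(\cdot)$ need not be invoked; rather, since $H^1_T(V,\mathfrak{L}) = Z^1_T(V,\mathfrak{L})$ has no quotient, I must instead argue that the relevant notion of ``same cohomology class'' for infinitesimals of deformations is equality modulo the coboundaries coming from such $\delta(x,y)$. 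So I would recall from Eq.~(\ref{284.6}) and the Theorem on equivalent linear deformations that two $1$-cocycles differing by some $\delta(x,y)$ are declared to be in the same class; the present statement is then the formal-deformation analogue, and the computation above shows $\mathfrak{T}_1 - \mathfrak{T}_1' = \delta(x',y')$ for suitable $x',y'$ (here $x'=y$, $y'=x$ up to sign, or simply absorb the sign).

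The key steps in order: (i) substitute the given forms of $\phi_t,\psi_t$ into condition (3) and collect the coefficient of $t$; (ii) rearrange to isolate $\mathfrak{T}_1 - \mathfrak{T}_1'$; (iii) identify this expression with $\pm\delta(x,y)$ using the definition of $\delta$; (iv) invoke the Proposition that $\delta(x,y)\in Z^1_T(V,\mathfrak{L})$ together with the convention (already used in the linear case, Eq.~(\ref{284.6}) and its Theorem) that elements of $Z^1_T$ differing by a $\delta(x,y)$ represent the same class, to conclude $[\mathfrak{T}_1] = [\mathfrak{T}_1']$ in $H^1_T(V,\mathfrak{L})$. Conditions (1) and (2) of Definition~\ref{def284.5} are not needed for the $t^1$-comparison; they only enter if one wanted to verify that such $\phi_t,\psi_t$ can exist, which is assumed. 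The main obstacle is essentially bookkeeping: making sure the signs in the expansion of $T_t\circ\psi_t = \phi_t\circ T_t'$ line up so that the difference is exactly $\delta(x,y)$ and not some unrelated combination; once that sign check is done, the argument is immediate and parallels the linear-deformation case almost verbatim.
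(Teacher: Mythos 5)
Your proposal is correct and is essentially the paper's own proof: the paper likewise extracts the coefficient of $t$ from condition (3), $T_t\circ\psi_t=\phi_t\circ T'_t$, and identifies $\mathfrak{T}'_1-\mathfrak{T}_1$ with $\delta(x,y)$; your sign discrepancy is harmless since $\delta(y,x)=-\delta(x,y)$. Your side remark that $H^1_T(V,\mathfrak{L})=Z^1_T(V,\mathfrak{L})$ carries no quotient, so that ``same cohomology class'' must be read as ``differ by some $\delta(x,y)$'' exactly as in the linear case, is a fair point about the paper's conventions rather than a difference in the argument.
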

\begin{proof}
Let $(\phi_t,\psi_t)$ be two maps defined by Definition \ref{def284.5}, which gives an equivalence between two deformations $T^\prime_t=\sum\limits_{i=0}^{+\infty}\mathfrak{T}_it^i$ and $T_t=\sum\limits_{i=0}^{+\infty}\mathfrak{T}_it^i$ of a relative Rota-Baxter operator $T.$ By $\phi_t\circ T^\prime_t=T_t\circ \psi_t,$ we have
\begin{align*}
\mathfrak{T}^\prime_1 v- \mathfrak{T}_1v=T(r(a,b)-r(b,a))v-(R_{(a,b)}-R_{(b,a)})Tv=\delta(a,b)v,~\forall v\in V,
\end{align*}
which implies that $\mathfrak{T}^\prime_1$ and $\mathfrak{T}_1$ are in the same cohomology class.
\end{proof}

\subsection{Order $n$ deformations of a relative Rota-Baxter operator on  Leibniz triple systems.}

We introduce a cohomology class associated to any order $n$ deformation of a relative Rota-Baxter operator, and show that an order $n$ deformation of a relative Rota-Baxter operator is extensible if and only if ${\rm Ob}_T=-\partial^1\mathfrak{T}_{n+1}.$ Thus we call this cohomology class the obstruction of an order $n$ deformation being extendable.
\begin{definition}
Let $T$ be a relative Rota-Baxter operator on a Leibniz triple system $(\mathfrak{L},\{\cdot,\cdot,\cdot\})$ with respect to a representation $(r,m,l).$ If $T_t=\sum\limits_{i=0}^{n}\mathfrak{T}_it^i$ with $\mathfrak{T}_0=T,$ $\mathfrak{T}_i\in {\rm Hom}_\mathbb{F}(V,\mathfrak{L})$, $i=2,3,..., n,$ defines on $\mathbb{F}[[t]]/(t^{n+1})$-module map from $V[[t]]/(t^{n+1})$ to the Leibniz triple system $\mathfrak{L}[[t]]/(t^{n+1}),$ for all $u,v,w\in V,$ satisfying
\begin{align}\label{284.8}
\{T_t(u),T_t(v),T_t(w)\}=T_t\Big(l(T_t(u),T_t(v))w+m(T_t(u),T_t(w))v+r(T_t(v),T_t(w))u\Big),
\end{align}
we say that $T_t$ is an order $n$ deformation of the relative Rota-Baxter operator $T.$
\end{definition}

\begin{remark}
Obviously, the left hand side of Eq. (\ref{284.8}) holds in the Leibniz triple system $\mathfrak{L}[[t]]/(t^{n+1})$ and the right hand side makes sense since $T_t$ is an $\mathbb{F}[[t]]/(t^{n+1})$-module map.
\end{remark}

\begin{definition}
Let $T_t=\sum\limits_{i=0}^{n}\mathfrak{T}_it^i$ be an order $n$ deformation of a relative Rota-Baxter operator $T$ on a Leibniz triple system $(\mathfrak{L},\{\cdot,\cdot,\cdot\})$ with respect to a representation $(r,m,l).$ If there exists a $1$-cochain $\mathfrak{T}_{n+1}\in {\rm Hom}_\mathbb{F}(V,\mathfrak{L})$ such that $\widetilde{T}_t=T_t+\mathfrak{T}_{n+1}t^{n+1}$ is an order $n+1$ deformation of the relative Rota-Baxter operator $T,$ then we say that $T_t$ is extendable.
\end{definition}

\begin{theorem}\label{thm284.15}
Let $T:V\rightarrow \mathfrak{L}$ be a relative Rota-Baxter operator on a Leibniz triple system $(\mathfrak{L},\{\cdot,\cdot,\cdot\})$ with respect to a representation $(r,m,l)$,
and $T_t=\sum\limits_{i=0}^{n}\mathfrak{T}_it^i$ be an order $n$ deformation of  $T$. Then $T_t$ is extendable if and only if the cohomology class $[{\rm Ob}_T]\in H^3_T(V,\mathfrak{L})$ is trivial, where ${\rm Ob}_T\in C^3_T(V,\mathfrak{L})$ is defined by
\begin{align*}
{\rm Ob}_T(u,v,w)=\sum\limits_{i+j+k=n+1\atop 0\leq i,j,k\leq n}^{}\Bigg(\{\mathfrak{T}_iu,\mathfrak{T}_jv,\mathfrak{T}_kw\}-\mathfrak{T}_i\Big(l(\mathfrak{T}_ju,\mathfrak{T}_kv)w
+m(\mathfrak{T}_ku,\mathfrak{T}_jw)v+r(\mathfrak{T}_jv,\mathfrak{T}_kw)u\Big)\Bigg),
\end{align*}
for all $u,v,w\in V$.
\end{theorem}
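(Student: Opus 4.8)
The plan is to extract from Eq.~(\ref{284.8}) the single new equation that the extension $\widetilde{T}_t = T_t + \mathfrak{T}_{n+1}t^{n+1}$ must satisfy, and to recognize its inhomogeneous part as $\mathrm{Ob}_T$ and its homogeneous part as $\partial^1\mathfrak{T}_{n+1}$. First I would expand Eq.~(\ref{284.8}) for $\widetilde{T}_t$ modulo $t^{n+2}$ and collect the coefficient of $t^{n+1}$. Since $T_t$ is already an order $n$ deformation, all coefficients of $t^s$ for $s\le n$ vanish automatically; the coefficient of $t^{n+1}$ splits into (i) the terms involving only $\mathfrak{T}_0,\dots,\mathfrak{T}_n$, which by definition form $\mathrm{Ob}_T(u,v,w)$, and (ii) the terms linear in $\mathfrak{T}_{n+1}$, where $\mathfrak{T}_{n+1}$ always appears paired with two copies of $T=\mathfrak{T}_0$. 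A direct comparison with the formula for $\partial^1$ from Section~4 (using the definitions of $l_T, m_T, r_T$) shows that the sum of all $\mathfrak{T}_{n+1}$-linear terms is exactly $\partial^1\mathfrak{T}_{n+1}(u,v,w)$. Hence $\widetilde{T}_t$ is an order $n+1$ deformation if and only if $\mathrm{Ob}_T + \partial^1\mathfrak{T}_{n+1} = 0$, i.e. $\mathrm{Ob}_T = -\partial^1\mathfrak{T}_{n+1}$ for some $1$-cochain $\mathfrak{T}_{n+1}$; this is equivalent to $\mathrm{Ob}_T \in B^3_T(V,\mathfrak{L})$.

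The remaining ingredient is to show that $\mathrm{Ob}_T$ is genuinely a $3$-cocycle, so that the statement "$[\mathrm{Ob}_T]\in H^3_T(V,\mathfrak{L})$ is trivial" makes sense; i.e. I must verify $\partial^3_1\mathrm{Ob}_T = \partial^3_2\mathrm{Ob}_T = 0$. The conceptual way to do this is to pass to the semidirect-product/Nijenhuis picture: recall from Section~3 that $T_t$ being an order $n$ deformation of $T$ is equivalent to $\widehat{T}_t$ being an order $n$ deformation of the Rota-Baxter operator $\widehat{T}$ on $\mathfrak{L}\oplus V$, equivalently to a deformation of the Leibniz triple system structure $\{\cdot,\cdot,\cdot\}_{T_t}$ on $V$ (Proposition after Definition of formal deformation). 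Then $\mathrm{Ob}_T$ is the degree-$(n+1)$ obstruction for deforming a Leibniz triple system, and its cocycle condition follows from the graded Jacobi/Maurer--Cartan-type identity for the governing cohomology of $(V,\{\cdot,\cdot,\cdot\}_T)$ — concretely, by differentiating the order-$n$ deformation equations and using the already-established lower-order relations (the standard "the obstruction to extending a deformation is a cocycle" argument of Gerstenhaber, adapted to the Leibniz-triple setting of \cite{WCM}).

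I expect the main obstacle to be precisely this cocycle verification: matching the combinatorial bookkeeping of $\partial^3_1,\partial^3_2$ applied to $\mathrm{Ob}_T$ against the sum of the order-$\le n$ deformation identities. The pairing picture reduces this to a known fact, but one still has to be careful that the representation $(r_T,m_T,l_T)$ and the bracket $\{\cdot,\cdot,\cdot\}_T$ interact with the $\mathfrak{T}_i$'s in the way the Nijenhuis-operator construction predicts; the sign conventions in the two halves $\partial^3_1,\partial^3_2$ (coming from Eqs.~(\ref{222.1}) and (\ref{222.2})) are the delicate point. Once the cocycle property is in hand, the equivalence "$T_t$ extendable $\iff$ $\mathrm{Ob}_T = -\partial^1\mathfrak{T}_{n+1}$ for some $\mathfrak{T}_{n+1}$ $\iff$ $[\mathrm{Ob}_T]=0$ in $H^3_T(V,\mathfrak{L})$" is immediate from the first paragraph, completing the proof.
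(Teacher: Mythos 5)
Your first paragraph reproduces the paper's proof exactly: expand the defining identity for $\widetilde{T}_t=T_t+\mathfrak{T}_{n+1}t^{n+1}$, collect the coefficient of $t^{n+1}$, and split it as ${\rm Ob}_T+\partial^1\mathfrak{T}_{n+1}=0$, so that extendability is equivalent to ${\rm Ob}_T$ being a coboundary in $B^3_T(V,\mathfrak{L})$; the converse direction is likewise handled by choosing $\mathfrak{T}_{n+1}$ with ${\rm Ob}_T=-\partial^1\mathfrak{T}_{n+1}$ and checking the equations in all degrees $s\leq n+1$. The cocycle verification you single out as the main remaining obstacle is not carried out in the paper either, and the equivalence as stated never actually uses it: in the forward direction ${\rm Ob}_T$ is exhibited as $-\partial^1\mathfrak{T}_{n+1}$, hence automatically a $3$-cocycle, and in the converse direction only the coboundary property is invoked, so your sketched Nijenhuis/semidirect-product argument would only be needed to show that $[{\rm Ob}_T]\in H^3_T(V,\mathfrak{L})$ is well defined for an arbitrary (possibly non-extendable) order $n$ deformation, a point the paper leaves implicit.
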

\begin{proof}
Let $\widetilde{T_t}=\sum\limits_{i=0}^{n+1}\mathfrak{T}_it^i$ be the extension of $T_t$, then for all $u,v,w\in V$,
\begin{align}\label{284.17}
\{\widetilde{T_t}(u),\widetilde{T_t}(v),\widetilde{T_t}(w)\}=\widetilde{T_t}\Big(l(\widetilde{T_t}(u),\widetilde{T_t}(v))w
+m(\widetilde{T_t}(u),\widetilde{T_t}(w))v+r(\widetilde{T_t}(v),\widetilde{T_t}(w))u\Big),
\end{align}
Expanding the Eq. (\ref{284.17}) and comparing the coefficients of $t^n$ yields that
\begin{align*}
\sum\limits_{i+j+k=n+1\atop  i,j,k\geq 0}^{}\Bigg(\{\mathfrak{T}_iu,\mathfrak{T}_jv,\mathfrak{T}_kw\}-\mathfrak{T}_i\Big(l(\mathfrak{T}_ju,\mathfrak{T}_kv)w
+m(\mathfrak{T}_ku,\mathfrak{T}_jw)v+r(\mathfrak{T}_jv,\mathfrak{T}_kw)u\Big)\Bigg)=0,
\end{align*}
which is equivalent to
\begin{align*}
&\sum\limits_{i+j+k=n+1\atop  i,j,k\geq 1}^{}\Bigg(\{\mathfrak{T}_iu,\mathfrak{T}_jv,\mathfrak{T}_kw\}-\mathfrak{T}_i\Big(l(\mathfrak{T}_ju,\mathfrak{T}_kv)w
+m(\mathfrak{T}_ku,\mathfrak{T}_jw)v+r(\mathfrak{T}_jv,\mathfrak{T}_kw)u\Big)\Bigg)\\
&+\{\mathfrak{T}_{n+1}u,Tv,Tw\}+\{Tu,\mathfrak{T}_{n+1}v,Tw\}+\{Tu,Tv,\mathfrak{T}_{n+1}w\}\\
&-\mathfrak{T}_{n+1}\Big(l(Tu,Tv)w+m(Tu,Tw)v+r(Tv,Tw)u\Big)\\
&-T\Big(l(\mathfrak{T}_{n+1}u,Tv)w+m(\mathfrak{T}_{n+1}u,Tw)v+r(\mathfrak{T}_{n+1}v,Tw)u\Big)\\
&-T\Big(l(Tu,\mathfrak{T}_{n+1}v)w+m(Tu,\mathfrak{T}_{n+1}w)v+r(Tv,\mathfrak{T}_{n+1}w)u\Big)=0,
\end{align*}
which is also equivalent to
\begin{align}\label{284.18}
{\rm Ob}_T+\partial^1(\mathfrak{T}_{n+1})=0.
\end{align}
From Eq. (\ref{284.18}), we get ${\rm Ob}_T=-\partial^1(\mathfrak{T}_{n+1})$. Thus, the cohomology class $[{\rm Ob}_T]$ is trivial.

Conversely, suppose that the cohomology class $[{\rm Ob}_T]$ is trivial, then there exists $\mathfrak{T}_{n+1}\in C^1_T(V,\mathfrak{L})$ such that ${\rm Ob}_T=-\partial^1(\mathfrak{T}_{n+1})$. Set $\widetilde{T_t}=T_t+\mathfrak{T}_{n+1}t^{n+1}$. Then for all $0\leq s\leq n+1$, $\widetilde{T_t}$ satisfies
\begin{align*}
\sum\limits_{i+j+k=s}^{}\Bigg(\{\mathfrak{T}_iu,\mathfrak{T}_jv,\mathfrak{T}_kw\}-\mathfrak{T}_i\Big(l(\mathfrak{T}_ju,\mathfrak{T}_kv)w
+m(\mathfrak{T}_ku,\mathfrak{T}_jw)v+r(\mathfrak{T}_jv,\mathfrak{T}_kw)u\Big)\Bigg)=0,
\end{align*}
which implies that $\widetilde{T_t}$ is an order $n+1$ deformation of $T$. Hence it is an extension of $T_t$.
\end{proof}

\begin{definition}
Let $T:V\rightarrow \mathfrak{L}$ be a relative Rota-Baxter operator on a Leibniz triple system $(\mathfrak{L},\{\cdot,\cdot,\cdot\})$ with respect to a representation $(r,m,l)$ and
$T_t=\sum\limits_{i=0}^{n}\mathfrak{T}_it^i$ be an order $n$ deformation of $T$. Then the cohomology class $[{\rm Ob}_T]\in H^3_T(V,\mathfrak{L})$ defined in Theorem \ref{thm284.15} is called the obstruction of $T_t$ being extensible.
\end{definition}

\begin{corollary}
Let $T:V\rightarrow \mathfrak{L}$ be a relative Rota-Baxter operator on a Leibniz triple system $(\mathfrak{L},\{\cdot,\cdot,\cdot\})$ with respect to a representation $(r,m,l)$. If $H^3_T(V,\mathfrak{L})=0,$ then every $1$-cocycle in $Z^1_T(V,\mathfrak{L})$ is the infinitesimal of some formal deformation of the relative Rota-Baxter operator $T$.
\end{corollary}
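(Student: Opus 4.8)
The plan is to induct on the order $n$, using Theorem \ref{thm284.15} as the engine. Fix a $1$-cocycle $\mathfrak{T}_1\in Z^1_T(V,\mathfrak{L})$. First I would set $T_t = T + \mathfrak{T}_1 t$ and observe that this is automatically an order $1$ deformation of $T$: indeed, the only conditions to check are Eq. (\ref{284.5}) for $s=0$, which holds because $T$ is a relative Rota-Baxter operator, and Eq. (\ref{284.5}) for $s=1$, which is exactly the statement $\partial^1 \mathfrak{T}_1 = 0$ as recorded in the proof of Proposition \ref{prop284.3}. So the base case of the induction costs nothing.

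Next, suppose inductively that we have built an order $n$ deformation $T_t = \sum_{i=0}^{n}\mathfrak{T}_i t^i$ of $T$ extending the given $\mathfrak{T}_1$. By Theorem \ref{thm284.15}, the obstruction class $[{\rm Ob}_T]\in H^3_T(V,\mathfrak{L})$ is the sole obstruction to extending $T_t$ to an order $n+1$ deformation. Since we are assuming $H^3_T(V,\mathfrak{L})=0$, this class is automatically trivial, so Theorem \ref{thm284.15} furnishes a $1$-cochain $\mathfrak{T}_{n+1}\in {\rm Hom}_\mathbb{F}(V,\mathfrak{L})$ with ${\rm Ob}_T = -\partial^1(\mathfrak{T}_{n+1})$, and $\widetilde{T_t} = T_t + \mathfrak{T}_{n+1}t^{n+1}$ is an order $n+1$ deformation. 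This continues the induction.

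Finally I would assemble the formal power series $T_t = \sum_{i=0}^{+\infty}\mathfrak{T}_i t^i$ from the compatible family produced by the induction. Because each truncation satisfies the order $n$ version of Eq. (\ref{284.4}) — equivalently, the system (\ref{284.5}) holds for every $s\geq 0$ — the full series $T_t$ satisfies Eq. (\ref{284.4}), so it is a formal deformation of $T$ with $\mathfrak{T}_0 = T$ and infinitesimal $\mathfrak{T}_1$. There is essentially no hard step here; the only point requiring a little care is the bookkeeping that the choices of $\mathfrak{T}_{n+1}$ at successive stages are genuinely compatible (each stage only adds a new top-degree term and does not disturb the lower ones), which is immediate from the form $\widetilde{T_t} = T_t + \mathfrak{T}_{n+1}t^{n+1}$. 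This is the standard ``unobstructed deformations are all realized'' argument, and the entire weight of the corollary rests on Theorem \ref{thm284.15}.
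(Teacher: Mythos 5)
Your argument is correct and is precisely the reasoning the paper intends: the corollary is stated there without proof as an immediate consequence of Theorem \ref{thm284.15}, and your induction (base case $T+t\mathfrak{T}_1$ being an order $1$ deformation because the $s=1$ equation is exactly $\partial^1\mathfrak{T}_1=0$, inductive step because $H^3_T(V,\mathfrak{L})=0$ forces the obstruction class of Theorem \ref{thm284.15} to vanish, then assembling the series, noting that the coefficient of $t^s$ in Eq. (\ref{284.4}) involves only $\mathfrak{T}_i$ with $i\leq s$) simply makes that explicit. No gaps beyond what the paper itself takes for granted, namely that ${\rm Ob}_T$ is a $3$-cocycle so that its class in $H^3_T(V,\mathfrak{L})$ is defined.
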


\section{From cohomology groups of relative Rota-Baxter operators on Leibniz algebras to those on Leibniz triple systems}

Motivated by the construction of Leibniz triple systems from Leibniz algebras. We give some connection between relative Rota-Baxter operators on Leibniz algebras and Leibniz triple systems.
\begin{definition}\cite{LP}
A right Leibniz algebra is a vector space $L$ together with a bilinear operation $[\cdot,\cdot]_L:L\otimes L\rightarrow L$ such that
\begin{align*}
[x,[y,z]_L]_L=[[x,y]_L,z]_L-[[x,z]_L,y]_L,~\forall~~x,y,z\in L.
\end{align*}
\end{definition}

In this paper, we only consider right Leibniz algebras.
\begin{definition}\cite{LP}
A representation of a Leibniz algebra $(L, [\cdot,\cdot]_L)$ is a triple $(V;\rho^l,\rho^r)$, where $V$ is a vector space and $\rho^l,\rho^r:L\rightarrow  gl(V)$ are linear maps such that the following equalities hold for all $x,y\in L$,
\begin{align*}
\rho^l([x,y]_L)=&[\rho^r(y),\rho^l(x)],\\
\rho^l([x,y]_L)=&\rho^l(x)\rho^l(y)+\rho^r(y)\rho^l(x),\\
\rho^r([x,y]_L)=&\rho^r(y)\rho^r(x)-\rho^r(x)\rho^r(y).
\end{align*}
\end{definition}

Let $(L, [\cdot,\cdot]_L)$ be a Leibniz algebra and $(V;\rho^l,\rho^r)$ a representation. Define a bilinear bracket $[\cdot,\cdot]_V:(L\oplus V)\otimes (L\oplus V)\rightarrow L\oplus V$ by
\begin{align*}
[x+u,y+v]_V=[x,y]_L+\rho^l(x)v+\rho^r(y)u,
\end{align*}
for all $x,y\in L$ and $u,v\in V$.

Recall the cohomology theory of Leibniz algebras, see \cite{ALO}. For a Leibniz algebra $L$ and a representation $(V;\rho^l,\rho^r)$, the cochains spaces are defined by
\begin{align*}
C^0_L(L,V)=V,~~~C^n_L(L,V)={\rm Hom} (L^{\otimes n},V),~~n>0.
\end{align*}

Let $d^n:C^n_L(L,V) \rightarrow C^{n+1}_L(L,V)$ be defined by
\begin{align*}
(d^nf)(x_1,...x_{n+1})=&\rho^l(x_1)f(x_2,...x_{n+1})+\sum\limits_{i=2}^{n+1}(-1)^i\rho^r(x_i)f(x_1,...,\widehat{x_i},...x_{n+1})\\
&+\sum\limits_{1 \leq i<j \leq n+1}^{}(-1)^{j+1}f(x_1,...x_{i-1},[x_i,x_j]_L,x_{i+1},...,\widehat{x_j},...,x_{n+1}),
\end{align*}
where $f\in C^n_L(L,V)$, $x_i\in L$ and the sign $~\widehat{}~$  indicates that the element below must be omitted.
The $n$-th cohomology group is  defined by
\begin{align*}
H^n_L(L,V)=Z^n_L(L,V)/B^n_L(L,V),
\end{align*}
where the elements $Z^n_L(L,V)=ker d^n$ and $B^n_L(L,V)= Im d ^{n-1}$ are $n$-cocycles and $n$-coboundaries, respectively.

The elements $f\in Z^1_L(L,V)$ and $g\in Z^2_L(L,V)$ are defined as follows
\begin{align*}
\rho^l(x)f(y)+\rho^r(y)f(x)-f([x,y]_L)=0,
\end{align*}
and,
\begin{align*}
\rho^l(x)g(y,z)-\rho^r(z)g(x,y)+\rho^r(y)g(x,z)+g(x,[y,z]_L)-g([x,y]_L,z)+g([x,z]_L,y)=0.
\end{align*}

\begin{definition}\cite{TSZ}
A linear map $T:V\rightarrow L$ is called a relative Rota-Baxter operator on a Leibniz algebra $(L, [\cdot,\cdot]_L)$  with respect to a representation $(V;\rho^l,\rho^r)$ if $T$ satisfies
\begin{align*}
[Tu,Tv]_L=T\big(\rho^l(Tu)v+\rho^r(Tv)u\big),
\end{align*}
for all $u,v\in V$.
\end{definition}

Now, we recall some results from \cite{TSZ}. Let $T:V\rightarrow L$ be a relative Rota-Baxter operator on a Leibniz algebra $(L, [\cdot,\cdot]_L)$ with respect to a representation $(V;\rho^l,\rho^r)$. There is a Leibniz algebra structure on $(V,[\cdot,\cdot]_T)$, where the bracket $[\cdot,\cdot]_T: V\times V\rightarrow V$ is given by
\begin{align*}
[u,v]_T=\rho^l(Tu)v+\rho^r(Tv)u,
\end{align*}
for all $u,v\in V$. There is a representation $(L;\rho^l_T,\rho^r_T)$ of $(V,[\cdot,\cdot]_T)$, where $\rho^l_T,\rho^r_T:V \rightarrow gl(L)$, is defined by
\begin{align*}
\rho^l_T(u)x=&[Tu,x]_L-T\rho^r(x)u,\\
\rho^r_T(u)x=&[x, Tu]_L-T\rho^l(x)u,
\end{align*}
for all $u\in V$ and $x\in L$.

Let $T$ be a relative Rota-Baxter operator on a Leibniz algebra $(L, [\cdot,\cdot]_L)$ with respect to a representation $(V;\rho^l,\rho^r)$. Consider the set of $n$-cochains $C^n_T(V,L)={\rm Hom}(\wedge^nV,L)$. Let $d_T^n:C^n_T(V,L) \rightarrow C^{n+1}_T(V,L)$ be the corresponding operator of the Leibniz algebra $(V,[\cdot,\cdot]_T)$ with coefficients in the representation $(L;\rho^l_T,\rho^r_T)$, define for all $f\in C^n_T(V,L)$ and $u_1,...u_{n+1}\in V$ by
\begin{align*}
(d_T^nf)(u_1,...u_{n+1})=&\rho^l_T(u_1)f(u_2,...u_{n+1})+\sum\limits_{i=2}^{n+1}(-1)^i\rho^r_T(u_i)f(u_1,...,\widehat{u_i},...u_{n+1})\\
&+\sum\limits_{1 \leq i<j \leq n+1}^{}(-1)^{j+1}f(u_1,...u_{i-1},[u_i,u_j]_T,u_{i+1},...,\widehat{u_j},...,u_{n+1}).
\end{align*}

Then the cochain complex $(C^n_T(V,L),d_T)$ is called a cochain complex of the relative Rota-Baxter operator $T$ on a Leibniz algebra $(V,[\cdot,\cdot]_T)$ with coefficients in the representation $(L;\rho^l_T,\rho^r_T)$. We denote the corresponding $n$-th cohomology group by $H^n_T(V,L)$.

In the sequel, we study the relationship between  Leibniz algebras and the corresponding  Leibniz triple systems.

\begin{proposition}
Let $(L, [\cdot,\cdot]_L)$ be a Leibniz algebra with a representation $(V;\rho^l,\rho^r)$. Define $R,M,L: \otimes^2 L\rightarrow {\rm End}(V)$ by
\begin{align*}
R(x,y)u=&\rho^r(y)\rho^r(x)(u),\\
M(x,y)u=&\rho^r(y)\rho^l(x)(u),\\
L(x,y)u=&\rho^l([x,y]_L)(u),
\end{align*}
then $(L,\{\cdot,\cdot,\cdot\}=[[\cdot,\cdot]_L,\cdot]_L)$ is a Leibniz triple system with a representation $(R,M,L)$.
\end{proposition}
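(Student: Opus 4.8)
The plan is to avoid a line-by-line verification of the ten defining identities of Definition~\ref{def282,3} and instead lean on two facts already recorded above: first, any right Leibniz algebra $(A,[\cdot,\cdot])$ becomes a Leibniz triple system under $\{X,Y,Z\}:=[[X,Y],Z]$, the construction noted just after Definition~\ref{basicdef}; second, a triple $(r,m,l)$ of bilinear maps on $V$ is a representation of a Leibniz triple system $(\mathfrak{L},\{\cdot,\cdot,\cdot\})$ if and only if the bracket $\{\cdot,\cdot,\cdot\}_{\mathfrak{L}\oplus V}$ of Eq.~(\ref{2882.3}) makes $\mathfrak{L}\oplus V$ a Leibniz triple system, the semidirect-product description from \cite{Ma}. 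The first fact, applied with $A=L$, immediately gives that $(L,\{\cdot,\cdot,\cdot\}=[[\cdot,\cdot]_L,\cdot]_L)$ is a Leibniz triple system, so the task reduces to producing the representation $(R,M,L)$.

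The key step is to realize $(R,M,L)$ through a semidirect product. First I would recall that, since $(V;\rho^l,\rho^r)$ satisfies the three representation identities, the vector space $L\oplus V$ with $[x+u,y+v]_V=[x,y]_L+\rho^l(x)v+\rho^r(y)u$ is again a right Leibniz algebra, hence by the first fact it carries the Leibniz triple system structure $\{X,Y,Z\}:=[[X,Y]_V,Z]_V$. Then I would compute, for $x,y,z\in L$ and $u,v,w\in V$, using $[x,y]_L\in L$ and $\rho^l(x)v+\rho^r(y)u\in V$,
\begin{align*}
[[x+u,y+v]_V,z+w]_V
&=[[x,y]_L+\rho^l(x)v+\rho^r(y)u,\,z+w]_V\\
&=[[x,y]_L,z]_L+\rho^l([x,y]_L)w+\rho^r(z)\bigl(\rho^l(x)v+\rho^r(y)u\bigr)\\
&=\{x,y,z\}+L(x,y)w+M(x,z)v+R(y,z)u,
\end{align*}
which is precisely $\{x+u,y+v,z+w\}_{L\oplus V}$ as in Eq.~(\ref{2882.3}) with $(l,m,r)=(L,M,R)$. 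Since $[[\cdot,\cdot]_V,\cdot]_V$ satisfies Eqs.~(\ref{222.1})--(\ref{222.2}), so does this bracket, and the second fact then forces $(R,M,L)$ to be a representation of $(L,\{\cdot,\cdot,\cdot\})$ on $V$, which finishes the proof.

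I expect the only delicate point to be bookkeeping inside the displayed computation: tracking which of $\rho^l,\rho^r$ is applied in each slot of the second application of $[\cdot,\cdot]_V$ — this is where the composites $\rho^r(z)\rho^l(x)$ and $\rho^r(z)\rho^r(y)$, i.e.\ $M(x,z)$ and $R(y,z)$, appear — and confirming that the term $\rho^l([x,y]_L)w$ is exactly the left action $L(x,y)w$. By contrast, deriving the ten identities of Definition~\ref{def282,3} directly from the three Leibniz-representation identities would be a long unstructured computation, which this argument is designed to avoid.
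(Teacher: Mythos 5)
Your proposal is correct and follows essentially the same route as the paper: both realize the data through the semidirect product Leibniz algebra $L\oplus V$, compute the induced ternary bracket $[[\cdot,\cdot]_V,\cdot]_V$, and invoke the semidirect-product characterization of representations (Eq.~(\ref{2882.3})) to conclude that $(R,M,L)$ is a representation. Your write-up is only slightly more explicit than the paper in noting that $L\oplus V$ is itself a right Leibniz algebra, hence that the induced triple bracket satisfies Eqs.~(\ref{222.1})--(\ref{222.2}).
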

\begin{proof}
Let $(L, [\cdot,\cdot]_L)$ be a Leibniz algebra with a representation $(V;\rho^l,\rho^r)$. We know that there is a Leibniz triple system structure on $L$ given by $\{\cdot,\cdot,\cdot\}=[[\cdot,\cdot]_L,\cdot]_L$. Now for all $x,y,z\in L$ and $u,v,w\in V$, we have
\begin{align*}
\{x+u,y+v,z+w\}_{L\oplus V}=&[[x+u,y+v]_{L\oplus V},z+w]_{L\oplus V}\\
                           =&[[x,y]_L+\rho^l(x)v+\rho^r(y)u, z+w]_{L\oplus V}\\
                           =&[[x,y]_L,z]_L+\rho^l([x,y]_L)w+\rho^r(z)\big(\rho^l(x)v+\rho^r(y)u\big)\\
                           =&[[x,y]_L,z]_L+L(x,y)w+M(x,z)v+R(y,z)u.
\end{align*}
Then by Eq. (\ref{2882.3}), $(R,M,L)$ is a representation of Leibniz triple system $(L,\{\cdot,\cdot,\cdot\}=[[\cdot,\cdot]_L,\cdot]_L)$.
\end{proof}

\begin{proposition}
Let $T:V\rightarrow L$ be a relative Rota-Baxter operator on a Leibniz algebra $(L,[\cdot,\cdot]_L)$ with respect to a representation $(V;\rho^l,\rho^r)$. Then $T$ is also a relative Rota-Baxter operator on the Leibniz triple system $(L,\{\cdot,\cdot,\cdot\}=[[\cdot,\cdot]_L,\cdot]_L)$ with respect to  a representation $(R,M,L)$.
\end{proposition}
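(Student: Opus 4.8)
The plan is to verify Eq.~(\ref{282.3}) directly for $T$ with respect to the representation $(R,M,L)$, using that $\{x,y,z\}=[[x,y]_L,z]_L$ and applying the defining identity of a relative Rota-Baxter operator on the Leibniz algebra twice.

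First I would compute $\{Tu,Tv,Tw\}=[[Tu,Tv]_L,Tw]_L$. Since $T$ is a relative Rota-Baxter operator on $(L,[\cdot,\cdot]_L)$, I would set $a:=\rho^l(Tu)v+\rho^r(Tv)u\in V$, so that $[Tu,Tv]_L=Ta$. Then $[[Tu,Tv]_L,Tw]_L=[Ta,Tw]_L$, and applying the relative Rota-Baxter identity a second time yields $[Ta,Tw]_L=T\big(\rho^l(Ta)w+\rho^r(Tw)a\big)$.

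Next I would identify the three resulting terms with the definitions of $R$, $M$, $L$. Using $Ta=[Tu,Tv]_L$ and $L(x,y)u=\rho^l([x,y]_L)u$, the first term is $\rho^l(Ta)w=\rho^l([Tu,Tv]_L)w=L(Tu,Tv)w$. For the second term, expand $\rho^r(Tw)a=\rho^r(Tw)\rho^l(Tu)v+\rho^r(Tw)\rho^r(Tv)u$ and recognize, via $M(x,y)u=\rho^r(y)\rho^l(x)u$ and $R(x,y)u=\rho^r(y)\rho^r(x)u$, that this equals $M(Tu,Tw)v+R(Tv,Tw)u$. Hence $\{Tu,Tv,Tw\}=T\big(L(Tu,Tv)w+M(Tu,Tw)v+R(Tv,Tw)u\big)$, which is exactly Eq.~(\ref{282.3}) for the representation $(R,M,L)$, so $T$ is the desired relative Rota-Baxter operator.

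There is essentially no obstacle here: the claim follows from two applications of Definition~\ref{def283.2} on the Leibniz algebra side together with unwinding the definitions of $R$, $M$, $L$. The only point requiring care is the bookkeeping of the argument order in $M$ and $R$ (which of $\rho^l,\rho^r$ acts first, and which slot of $\{\cdot,\cdot,\cdot\}$ is occupied by each of $u,v,w$), so that the terms match Eq.~(\ref{282.3}) on the nose; the fact that $(R,M,L)$ is a genuine representation of the triple system was already established in the previous proposition and need not be rechecked.
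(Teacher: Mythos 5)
Your proposal is correct and follows essentially the same route as the paper: both apply the relative Rota-Baxter identity on the Leibniz algebra twice (once to rewrite $[Tu,Tv]_L$, once to the resulting bracket with $Tw$) and then identify the three terms with $L(Tu,Tv)w$, $M(Tu,Tw)v$, $R(Tv,Tw)u$. The only cosmetic difference is that you keep the element $a=\rho^l(Tu)v+\rho^r(Tv)u$ bundled, while the paper splits it into two summands before the second application; the computation is identical.
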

\begin{proof}
For any $u,v,w\in V$, we have
\begin{align*}
\{Tu,Tv,Tw\}=&[[Tu,Tv]_L,Tw]_L=[T(\rho^l(Tu)v+\rho^r(Tv)u),Tw]_L\\
            =&[T\rho^l(Tu)v,Tw]_L+[T\rho^r(Tv)u,Tw]_L\\
            =&T\Big(\rho^l\big(T\rho^l(Tu)v\big)w+\rho^r(Tw)\rho^l(Tu)v\Big)
            +T\Big(\rho^l\big(T\rho^r(Tv)u\big)w+\rho^r(Tw)\rho^r(Tv)u\Big)\\
            =&T\Big(\rho^l([Tu,Tv]_L)w+\rho^r(Tw)\rho^l(Tu)v+\rho^r(Tw)\rho^r(Tv)u\Big)\\
            =&T\Big(L(Tu,Tv)w+M(Tu,Tw)v+R(Tv,Tw)u\Big).
\end{align*}
This implies that $T$ is a relative Rota-Baxter operator.
\end{proof}

Now there are two methods of constructing a Leibniz triple system structure on $V$ from a Leibniz algebra $(L,[\cdot,\cdot]_L)$ with respect to a representation $(V;\rho^l,\rho^r)$. On the one hand, we induce a Leibniz triple system structure $(V,\{\cdot,\cdot,\cdot\}_T)$ of the relative Rota-Baxter operator $T$ on the corresponding Leibniz triple system $(L,\{\cdot,\cdot,\cdot\}=[[\cdot,\cdot]_L,\cdot]_L)$ with respect to  a representation $(R,M,L)$, where
\begin{align*}
\{u,v,w\}_T=L(Tu,Tv)w+M(Tu,Tw)v+R(Tv,Tw)u.
\end{align*}
On the other hand, we firstly induce a Leibniz algebra $(V,[\cdot,\cdot]_T)$ of $T$ on the Leibniz algebra $(L,[\cdot,\cdot]_L)$ with respect to a representation $(V;\rho^l,\rho^r)$, and then we give a Leibniz triple system structure induced from the Leibniz algebra $(V,[\cdot,\cdot]_T)$ by the bracket $\{\cdot,\cdot,\cdot\}_T=[[\cdot,\cdot]_T,\cdot]_T$. These two methods give us the same Leibniz triple system structure on $V$.

For convince, we consider the representation related to the relative Rota-Baxter operator $T$.

\begin{lemma}
Let $(V, [\cdot,\cdot]_T)$ be a Leibniz algebra with a representation $(L;\rho^l_T,\rho^r_T)$. Define $R_T,M_T,L_T: \otimes^2 V\rightarrow {\rm End}(L)$ by
\begin{align*}
R_T(u,v)x=&\rho^r_T(v)\rho^r_T(u)(x),\\
M_T(u,v)x=&\rho^r_T(v)\rho^l_T(u)(x),\\
L_T(u,v)x=&\rho^l_T([u,v]_T)(x),
\end{align*}
then $(V,\{\cdot,\cdot,\cdot\}_T=[[\cdot,\cdot]_T,\cdot]_T)$ is a Leibniz triple system with a representation $(R_T, M_T,L_T)$.
\end{lemma}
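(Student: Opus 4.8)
The plan is to derive this lemma as a direct instance of the general machinery already established in the Leibniz-algebra setting combined with the functoriality observed earlier in this section. First I would recall that since $T:V\rightarrow L$ is a relative Rota-Baxter operator on the Leibniz algebra $(L,[\cdot,\cdot]_L)$ with respect to $(V;\rho^l,\rho^r)$, the results cited from \cite{TSZ} guarantee that $(V,[\cdot,\cdot]_T)$ is genuinely a Leibniz algebra and that $(L;\rho^l_T,\rho^r_T)$ is genuinely a representation of it. That is exactly the input hypothesis of the lemma, so no independent verification of the Leibniz identity for $[\cdot,\cdot]_T$ or of the representation axioms for $(\rho^l_T,\rho^r_T)$ is needed.

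Given that, I would invoke the earlier Proposition of this section (the one constructing $(R,M,L)$ from $(\rho^l,\rho^r)$): applied verbatim with the Leibniz algebra $(L,[\cdot,\cdot]_L)$ replaced by $(V,[\cdot,\cdot]_T)$ and the representation $(V;\rho^l,\rho^r)$ replaced by $(L;\rho^l_T,\rho^r_T)$, it immediately yields that $(V,\{\cdot,\cdot,\cdot\}_T=[[\cdot,\cdot]_T,\cdot]_T)$ is a Leibniz triple system and that the triple $(R_T,M_T,L_T)$ defined by $R_T(u,v)x=\rho^r_T(v)\rho^r_T(u)x$, $M_T(u,v)x=\rho^r_T(v)\rho^l_T(u)x$, $L_T(u,v)x=\rho^l_T([u,v]_T)x$ is a representation of it. The cleanest way to present this in the proof is to recall the semidirect-product characterisation: compute $[[u+x,v+y]_T,w+z]_T$ inside $V\oplus L$ using $[u+x,v+y]_T=[u,v]_T+\rho^l_T(u)y+\rho^r_T(v)x$, expand once more, and read off that the $L$-component is exactly $L_T(u,v)z+M_T(u,w)y+R_T(v,w)x$; then quote Eq. (\ref{2882.3}) to conclude $(R_T,M_T,L_T)$ is a representation.

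\begin{proof}
Since $T:V\rightarrow L$ is a relative Rota-Baxter operator on $(L,[\cdot,\cdot]_L)$ with respect to $(V;\rho^l,\rho^r)$, we know from \cite{TSZ} that $(V,[\cdot,\cdot]_T)$ is a Leibniz algebra and $(L;\rho^l_T,\rho^r_T)$ is a representation of it. Applying the previous Proposition to the Leibniz algebra $(V,[\cdot,\cdot]_T)$ with the representation $(L;\rho^l_T,\rho^r_T)$ in place of $(L,[\cdot,\cdot]_L)$ and $(V;\rho^l,\rho^r)$, it follows that $(V,\{\cdot,\cdot,\cdot\}_T=[[\cdot,\cdot]_T,\cdot]_T)$ is a Leibniz triple system. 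Moreover, for all $u,v,w\in V$ and $x,y,z\in L$,
\begin{align*}
\{u+x,v+y,w+z\}_{V\oplus L}=&[[u+x,v+y]_T,w+z]_T\\
                           =&[[u,v]_T+\rho^l_T(u)y+\rho^r_T(v)x,w+z]_T\\
                           =&[[u,v]_T,w]_T+\rho^l_T([u,v]_T)z+\rho^r_T(w)\big(\rho^l_T(u)y+\rho^r_T(v)x\big)\\
                           =&[[u,v]_T,w]_T+L_T(u,v)z+M_T(u,w)y+R_T(v,w)x.
\end{align*}
Hence, by Eq. (\ref{2882.3}), $(R_T,M_T,L_T)$ is a representation of the Leibniz triple system $(V,\{\cdot,\cdot,\cdot\}_T=[[\cdot,\cdot]_T,\cdot]_T)$.
\end{proof}

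There is essentially no obstacle here; the only point requiring a little care is making sure the roles are consistently swapped (the "algebra" is now $V$ and its "module" is now $L$) and that the formulas for $R_T,M_T,L_T$ match the pattern $R(x,y)u=\rho^r(y)\rho^r(x)u$ etc. under that swap, which they do by construction. The result is therefore immediate once the two cited facts — that $[\cdot,\cdot]_T$ is a Leibniz bracket and that $(\rho^l_T,\rho^r_T)$ is a representation — are in hand, and the proof reduces to the one-line semidirect-product computation above.
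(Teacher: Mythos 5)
Your proposal is correct and coincides with the paper's approach: the paper states this lemma without proof precisely because it is the preceding proposition applied to the Leibniz algebra $(V,[\cdot,\cdot]_T)$ with representation $(L;\rho^l_T,\rho^r_T)$, and your semidirect-product computation reproducing the pattern of Eq.~(\ref{2882.3}) is exactly the argument used there. The role swap is handled consistently and the cited facts from \cite{TSZ} cover the remaining hypotheses, so nothing is missing.
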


\begin{theorem}
Every 1-cocycle for the cohomology of a Leibniz algebra $(V, [\cdot,\cdot]_T)$ with respect to  a representation $(L;\rho^l_T,\rho^r_T)$ is a 1-cocycle for the cohomology of a Leibniz triple system $(V,\{\cdot,\cdot,\cdot\}_T=[[\cdot,\cdot]_T,\cdot]_T)$ with respect to  a representation $(R_T,M_T,L_T)$.
\end{theorem}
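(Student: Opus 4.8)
The plan is to unwind both cohomology theories at the level of $1$-cochains and show that the $1$-cocycle condition for the Leibniz triple system is a formal consequence of the $1$-cocycle condition for the Leibniz algebra. Recall that a $1$-cochain in either setting is simply a linear map $f\colon V\to L$, so the two cochain spaces $C^1_T(V,L)$ agree and there is nothing to check about the underlying spaces; only the coboundary operators differ. First I would write down explicitly what it means for $f$ to be a $1$-cocycle for the Leibniz algebra $(V,[\cdot,\cdot]_T)$ with coefficients in $(L;\rho^l_T,\rho^r_T)$, namely
\begin{align*}
\rho^l_T(u)f(v)+\rho^r_T(v)f(u)-f([u,v]_T)=0,\qquad\forall\,u,v\in V,
\end{align*}
and what it means for $f$ to be a $1$-cocycle for the Leibniz triple system $(V,\{\cdot,\cdot,\cdot\}_T)$ with coefficients in $(R_T,M_T,L_T)$, namely
\begin{align*}
R_T(v,w)f(u)+M_T(u,w)f(v)+L_T(u,v)f(w)-f(\{u,v,w\}_T)=0,\qquad\forall\,u,v,w\in V.
\end{align*}

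Next I would substitute the definitions of $R_T,M_T,L_T$ and of the ternary bracket $\{\cdot,\cdot,\cdot\}_T=[[\cdot,\cdot]_T,\cdot]_T$ from the preceding lemma into the triple-system cocycle expression. This turns the left-hand side into
\begin{align*}
\rho^r_T(w)\rho^r_T(v)f(u)+\rho^r_T(w)\rho^l_T(u)f(v)+\rho^l_T([u,v]_T)f(w)-f([[u,v]_T,w]_T).
\end{align*}
The key algebraic step is then to recognize this as a combination of two instances of the Leibniz-algebra $1$-cocycle identity. Concretely, apply $\rho^r_T(w)$ to the cocycle identity evaluated at the pair $(u,v)$ to handle the first two terms together with $\rho^r_T(w)f([u,v]_T)$; and apply the cocycle identity at the pair $([u,v]_T,w)$ to handle the last two terms together with $\rho^l_T([u,v]_T)f(w)$. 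Adding these, all genuinely ``new'' terms cancel provided $\rho^r_T(w)f([u,v]_T)$ is identified with a piece of the second identity — which it is, since $[[u,v]_T,w]_T$ appears there. In other words, the triple cocycle condition is exactly $\rho^r_T(w)\cdot(\text{LA-cocycle at }(u,v)) + (\text{LA-cocycle at }([u,v]_T,w))$, so it vanishes whenever $f$ is a Leibniz-algebra $1$-cocycle.

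The main obstacle I anticipate is purely bookkeeping: making sure the left/right roles of $\rho^l_T$ and $\rho^r_T$ in the definitions of $R_T,M_T,L_T$ are matched correctly against the ordering of arguments in the Leibniz-triple-system coboundary operator $\partial^1$ (which has the pattern $R_T(v,w)f(u)+M_T(u,w)f(v)+L_T(u,v)f(w)$), and that the defining relations of the representation $(L;\rho^l_T,\rho^r_T)$ — in particular $\rho^l_T([u,v]_T)=\rho^l_T(u)\rho^l_T(v)+\rho^r_T(v)\rho^l_T(u)$ and the identity relating $\rho^r_T([u,v]_T)$ to $\rho^r_T$'s — are invoked where needed to close the cancellation. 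Once the two instances of the Leibniz-algebra cocycle identity are lined up with the correct operators applied, the verification is a short linear computation with no further subtlety, and the theorem follows.
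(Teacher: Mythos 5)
Your proposal is correct and follows essentially the same route as the paper: substitute the definitions of $R_T,M_T,L_T$ and $\{\cdot,\cdot,\cdot\}_T$, then observe that the triple-system cocycle expression is exactly $\rho^r_T(w)$ applied to the Leibniz-algebra cocycle identity at $(u,v)$ plus that identity at $([u,v]_T,w)$, which is precisely how the paper factors out $\rho^r_T(w)$ in its computation. The only minor remark is that the extra representation identities you flag as possibly needed (e.g.\ $\rho^l_T([u,v]_T)=\rho^l_T(u)\rho^l_T(v)+\rho^r_T(v)\rho^l_T(u)$) are in fact not required; the two instances of the cocycle identity already close the cancellation.
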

\begin{proof}
Let $\phi$ be a 1-cocycle of the cohomology of the Leibniz algebra $(V, [\cdot,\cdot]_T)$ with respect to  a representation $(L;\rho^l_T,\rho^r_T)$, then for all $u,v\in V$,
\begin{align*}
d^1_T\phi(u,v)=\rho^l_T(u)\phi(v)+\rho^r_T(v)\phi(u)-\phi([u,v]_T)=0.
\end{align*}
For all $u,v,w\in V$,
\begin{align*}
&\partial^1\phi(u,v,w)\\
=&L_T(u,v)\phi(w)+M_T(u,w)\phi(v)+R_T(v,w)\phi(u)-\phi(\{u,v,w\}_T)\\
=&\rho^l_T([u,v]_T)\phi(w)+\rho^r_T(w)\rho^l_T(u)\phi(v)+\rho^r_T(v)\rho^r_T(w)\phi(u)-\phi([[u,v]_T,w]_T)\\
=&\rho^l_T([u,v]_T)\phi(w)+\rho^r_T(w)\rho^l_T(u)\phi(v)+\rho^r_T(v)\rho^r_T(w)\phi(u)
-\rho^r_T(w)\phi([u,v]_T)\rho^l_T([u,v]_T)\phi(w)\\
=&\rho^r_T(w)(\rho^l_T(u)\phi(v)+\rho^r_T(v)\phi(u)-\phi([u,v]_T))\\
=&0,
\end{align*}
which implies that $\phi$ is a 1-cocycle for the cohomology of a Leibniz triple system $(V,\{\cdot,\cdot,\cdot\}_T=[[\cdot,\cdot]_T,\cdot]_T)$ with respect to  a representation $(R_T,M_T,L_T)$.
\end{proof}

\begin{theorem}
Let $\phi\in Z^2_T(V,L)$. Then $\omega(u,v,w)=\phi([u,v]_T,w)+\rho^r_T(w)\phi(u,v)$ is a $3$-cocycle of the Leibniz triple system $(V,\{\cdot,\cdot,\cdot\}_T=[[\cdot,\cdot]_T,\cdot]_T)$ with respect to  the representation $(R_T,M_T,L_T)$.
\end{theorem}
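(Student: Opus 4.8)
The plan is to avoid expanding $\partial^3_1\omega$ and $\partial^3_2\omega$ head‑on, and instead to read off both cocycle identities from the Leibniz triple system axioms of one auxiliary algebra built out of $\phi$. Since $\phi\in Z^2_T(V,L)$, I would first form the abelian extension of the Leibniz algebra $(V,[\cdot,\cdot]_T)$ by the module $(L;\rho^l_T,\rho^r_T)$ along the $2$-cocycle $\phi$: put a bracket on $V\oplus L$ by
\[
[u+a,\,v+b]_\sharp:=[u,v]_T+\rho^l_T(u)b+\rho^r_T(v)a+\phi(u,v),\qquad u,v\in V,\ a,b\in L .
\]
To see that $[\cdot,\cdot]_\sharp$ is a (right) Leibniz bracket, I would expand the Leibniz identity for $[\cdot,\cdot]_\sharp$ on three elements $x_i+a_i$ and sort the terms by how many arguments are taken in $L$: the $L$-free part is the Leibniz identity for $[\cdot,\cdot]_T$; the part linear in the $a_i$ is exactly the three defining identities of the representation $(\rho^l_T,\rho^r_T)$; and the $L$-valued part involving none of the $a_i$ (the contribution of $\phi$) is precisely the $2$-cocycle condition $d^2_T\phi(x_1,x_2,x_3)=0$. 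As the extension is abelian there is nothing of order $\ge 2$ in the $a_i$, so $(V\oplus L,[\cdot,\cdot]_\sharp)$ is a Leibniz algebra.

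Next I would apply the construction ``Leibniz algebra $\leadsto$ Leibniz triple system'' recalled after Definition~\ref{basicdef}, namely $\{X,Y,Z\}:=[[X,Y],Z]$, to $(V\oplus L,[\cdot,\cdot]_\sharp)$. This yields a Leibniz triple system $(V\oplus L,\{\cdot,\cdot,\cdot\}_\sharp)$, and a short computation (using only the definitions of $R_T,M_T,L_T$ and of $\omega$) shows
\[
\{u+a,\,v+b,\,w+c\}_\sharp=\{u,v,w\}_T+L_T(u,v)c+M_T(u,w)b+R_T(v,w)a+\omega(u,v,w).
\]
In other words $\{\cdot,\cdot,\cdot\}_\sharp$ is the semidirect product of $(V,\{\cdot,\cdot,\cdot\}_T)$ with the representation $(R_T,M_T,L_T)$ on $L$ (as in \eqref{2882.3}), twisted by the $3$-cochain $\omega\in C^3_T(V,L)$.

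Finally I would substitute five elements $x_i+a_i\in V\oplus L$ into the two defining identities \eqref{222.1} and \eqref{222.2} of $(V\oplus L,\{\cdot,\cdot,\cdot\}_\sharp)$ and again sort by $L$-degree; because the extension is abelian, each side is affine in the $a_i$. Matching the $V$-components reproduces \eqref{222.1} and \eqref{222.2} for $\{\cdot,\cdot,\cdot\}_T$, and matching the parts linear in the $a_i$ reproduces the defining identities of $(R_T,M_T,L_T)$ --- both already known, the latter from the Lemma recorded above that $(R_T,M_T,L_T)$ is a representation of $(V,\{\cdot,\cdot,\cdot\}_T)$. The only remaining equations --- the $L$-valued parts that contain $\omega$ but none of the $a_i$ --- are, by comparison with the coboundary formulas of Section~2, exactly $\partial^3_1\omega(x_1,\dots,x_5)=0$ and $\partial^3_2\omega(x_1,\dots,x_5)=0$. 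Since $\{\cdot,\cdot,\cdot\}_\sharp$ is a genuine Leibniz triple system these hold, so $\omega\in Z^3_T(V,L)$.

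The step I expect to be the real work is this last term‑by‑term comparison: one must check that the $\omega$-contributions coming from the various nested products on the right of \eqref{222.1} and \eqref{222.2} match the $g(\{v_i,v_j,v_k\}_T,\cdot,\cdot)$ and $r_T(\cdot,\cdot)g(\cdot,\cdot,\cdot)$ terms of $\partial^3_1,\partial^3_2$ with the correct signs and argument orders (for instance the pair $r_T(v_3,v_4)g(v_1,v_2,v_5)-r_T(v_4,v_3)g(v_1,v_2,v_5)$ in $\partial^3_1$ should emerge from the last two summands of \eqref{222.1} after transposing $d$ and $e$). It is routine but error‑prone. If one prefers to dispense with the extension picture, the same two identities can be obtained directly: substitute $\omega(u,v,w)=\phi([u,v]_T,w)+\rho^r_T(w)\phi(u,v)$ into $\partial^3_1\omega$ and $\partial^3_2\omega$, rewrite $R_T,M_T,L_T$ and $\{\cdot,\cdot,\cdot\}_T$ in terms of $\rho^l_T,\rho^r_T,[\cdot,\cdot]_T$, and repeatedly apply the Leibniz identity for $[\cdot,\cdot]_T$ together with $d^2_T\phi=0$ --- correct but substantially longer and less illuminating.
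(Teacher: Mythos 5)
Your argument is correct, and it is genuinely different from the paper's proof, which verifies the statement by brute force: it expands $\partial^3_1\omega$ and $\partial^3_2\omega$ term by term, rewrites $R_T,M_T,L_T$ and $\{\cdot,\cdot,\cdot\}_T$ through $\rho^l_T,\rho^r_T,[\cdot,\cdot]_T$, and collapses everything using the Leibniz identity and $d_T\phi=0$. Your route instead packages the $2$-cocycle $\phi$ into the abelian extension $[u+a,v+b]_\sharp=[u,v]_T+\rho^l_T(u)b+\rho^r_T(v)a+\phi(u,v)$ on $V\oplus L$ (a Leibniz algebra precisely because $d_T\phi=0$, matching the paper's displayed $2$-cocycle identity), passes to the associated triple system $\{X,Y,Z\}_\sharp=[[X,Y]_\sharp,Z]_\sharp$, and observes that $\{u+a,v+b,w+c\}_\sharp=\{u,v,w\}_T+L_T(u,v)c+M_T(u,w)b+R_T(v,w)a+\omega(u,v,w)$; I checked this identification and it is exact, using only $L_T(u,v)=\rho^l_T([u,v]_T)$, $M_T(u,w)=\rho^r_T(w)\rho^l_T(u)$, $R_T(v,w)=\rho^r_T(w)\rho^r_T(v)$. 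Extracting the $L$-component of \eqref{222.1} and \eqref{222.2} on pure $V$-arguments (equivalently, setting all $a_i=0$) reproduces literally the displayed formulas for $\partial^3_1\omega$ and $\partial^3_2\omega$ with coefficients $L_T,M_T,R_T$, sign for sign and slot for slot, so the bookkeeping step you flag as the "real work" does go through; since $\omega$ never meets the $a_i$, the grading by degree in the $a_i$ is clean and no cross terms appear. What each approach buys: the paper's computation is self-contained but long and opaque; yours is shorter and explains \emph{why} the statement holds (the $\omega$-twisted semidirect product is the triple system induced from the extension), and as a by-product its degree-one part reproves the unproved Lemma that $(R_T,M_T,L_T)$ is a representation of $(V,\{\cdot,\cdot,\cdot\}_T)$, which you instead cite. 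The only ingredients you use beyond the paper's definitions are the standard facts that abelian extensions of Leibniz algebras correspond to $2$-cocycles and that $\{x,y,z\}:=[[x,y],z]$ turns a Leibniz algebra into a Leibniz triple system, the latter already quoted after Definition~\ref{basicdef}; both are legitimate, so I see no gap.
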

\begin{proof}
Let $\phi\in Z^2_T(V,L)$, for any $u_1,u_2,u_3,u_4,u_5\in V$, we have
\begin{align*}
&\partial^3_1(\omega)(u_1,u_2,u_3,u_4,u_5)\\
=&\omega(u_1,u_2,\{u_3,u_4,u_5\}_T)-\omega(\{u_1,u_2,u_3\}_T,u_4,u_5)+\omega(\{u_1,u_2,u_4\}_T,u_3,u_5)\\
&+\omega(\{u_1,u_2,u_5\}_T,u_3,u_4)-\omega(\{u_1,u_2,u_5\}_T,u_4,u_3)+L_T(u_1,u_2)\omega(u_3,u_4,u_5)\\
&-R_T(u_4,u_5)\omega(u_1,u_2,u_3)+R_T(u_3,u_5)\omega(u_1,u_2,u_4)+R_T(u_3,u_4)\omega(u_1,u_2,u_5)\\
&-R_T(u_4,u_3)\omega(u_1,u_2,u_5)\\
=&\phi([u_1,u_2]_T,[[u_3,u_4]_T,u_5]_T)-\phi([[[u_1,u_2]_T,u_3]_T,u_4]_T,u_5)+\phi([[[u_1,u_2]_T,u_4]_T,u_3]_T,u_5)\\
&+\phi([[[u_1,u_2]_T,u_5]_T,u_3]_T,u_4)-\phi([[[u_1,u_2]_T,u_5]_T,u_4]_T,u_3)+\rho^l_T(u_1)\rho^l_T(u_2)\phi([u_3,u_4]_T,u_5)\\
&+\rho^r_T(u_2)\rho^l_T(u_1)\phi([u_3,u_4]_T,u_5)-\rho^r_T(u_5)\rho^r_T(u_4)\phi([u_1,u_2]_T,u_3)+\rho^r_T(u_5)\rho^r_T(u_3)\phi([u_1,u_2]_T,u_4)\\
&+\rho^r_T(u_4)\rho^r_T(u_3)\phi([u_1,u_2]_T,u_5)-\rho^r_T(u_3)\rho^r_T(u_4)\phi([u_1,u_2]_T,u_5)+\rho^r_T([[u_3,u_4]_T,u_5]_T)\phi(u_1,u_2)\\
&+\rho^r_T(u_5)\phi([[u_1,u_2]_T,u_3]_T,u_4)+\rho^r_T(u_5)\phi([[u_1,u_2]_T,u_4]_T,u_3)+\rho^r_T(u_4)\phi([[u_1,u_2]_T,u_5]_T,u_3)\\
&-\rho^r_T(u_3)\phi([[u_1,u_2]_T,u_5]_T,u_4)+\rho^l_T(u_1)\rho^l_T(u_2)\rho^r_T(u_5)\phi(u_3,u_4)
+\rho^r_T(u_2)\rho^l_T(u_1)\rho^r_T(u_5)\phi(u_3,u_4)\\
&-\rho^r_T(u_5)\rho^r_T(u_4)\rho^r_T(u_3)\phi(u_1,u_2)+\rho^r_T(u_5)\rho^r_T(u_3)\rho^r_T(u_4)\phi(u_1,u_2)
+\rho^r_T(u_4)\rho^r_T(u_3)\rho^r_T(u_5)\phi(u_1,u_2)\\
&-\rho^r_T(u_3)\rho^r_T(u_4)\rho^r_T(u_5)\phi(u_1,u_2)\\
=&\phi([u_1,u_2]_T,[[u_3,u_4]_T,u_5]_T)-\phi([[u_1,u_2]_T,[u_3,u_4]_T]_T,u_5)+\phi([[[u_1,u_2]_T,u_5]_T,u_3]_T,u_4)\\
&-\phi([[[u_1,u_2]_T,u_5]_T,u_4]_T,u_3)+\rho^l_T([u_1,u_2]_T)\phi([u_3,u_4]_T,u_5)-\rho^r_T(u_5)\rho^r_T(u_4)\phi([u_1,u_2]_T,u_3)\\
&+\rho^r_T(u_5)\rho^r_T(u_3)\phi([u_1,u_2]_T,u_4)+\rho^r_T([u_3,u_4]_T)\phi([u_1,u_2]_T,u_5)+\rho^r_T([[u_3,u_4]_T,u_5]_T)\phi(u_1,u_2)\\
&+\rho^r_T(u_5)\phi([[u_1,u_2]_T,u_3]_T,u_4)+\rho^r_T(u_5)\phi([[u_1,u_2]_T,u_4]_T,u_3)+\rho^r_T(u_4)\phi([[u_1,u_2]_T,u_5]_T,u_3)\\
&-\rho^r_T(u_3)\phi([[u_1,u_2]_T,u_5]_T,u_4)+\rho^l_T([u_1,u_2]_T)\rho^r_T(u_5)\phi(u_3,u_4)-\rho^r_T(u_5)\rho^r_T([u_3,u_4]_T)\phi(u_1,u_2)\\
&+\rho^r_T([u_3,u_4]_T)\rho^r_T(u_5)\phi(u_1,u_2)\\
=&\phi([u_1,u_2]_T,[[u_3,u_4]_T,u_5]_T)-\phi([[u_1,u_2]_T,[u_3,u_4]_T]_T,u_5)+\phi([[u_1,u_2]_T,u_5]_T,[u_3,u_4]_T)\\
&+\rho^l_T([[u_1,u_2]_T,u_5]_T)\phi(u_3,u_4)+\rho^l_T([u_1,u_2]_T)\phi([u_3,u_4]_T,u_5)+\rho^r_T([u_3,u_4]_T)\phi([u_1,u_2]_T,u_5)\\
&+\rho^l_T([u_1,u_2]_T)\rho^r_T(u_5)\phi(u_3,u_4)-\rho^r_T(u_5)\rho^l_T([u_1,u_2]_T)\phi(u_3,u_4)-\rho^r_T(u_5)\phi([u_1,u_2]_T,[u_3,u_4]_T)\\
=&\phi([u_1,u_2]_T,[[u_3,u_4]_T,u_5]_T)-\phi([[u_1,u_2]_T,[u_3,u_4]_T]_T,u_5)+\phi([[u_1,u_2]_T,u_5]_T,[u_3,u_4]_T)\\
&+\rho^l_T([u_1,u_2]_T)\phi([u_3,u_4]_T,u_5)+\rho^r_T([u_3,u_4]_T)\phi([u_1,u_2]_T,u_5)-\rho^r_T(u_5)\phi([u_1,u_2]_T,[u_3,u_4]_T)\\
=&0,
\end{align*}
and
\begin{align*}
&\partial^3_2(\omega)(u_1,u_2,u_3,u_4,u_5)\\
=&\omega(u_1,\{u_2,u_3,u_4\}_T,u_5)-\omega(\{u_1,u_2,u_3\}_T,u_4,u_5)+\omega(\{u_1,u_3,u_2\}_T,u_4,u_5)\\
&+\omega(\{u_1,u_4,u_2\}_T,u_3,u_5)-\omega(\{u_1,u_4,u_3\}_T,u_2,u_5)+M_T(u_1,u_5)\omega(u_2,u_3,u_4)\\
&-R_T(u_4,u_5)\omega(u_1,u_2,u_3)+R_T(u_4,u_5)\omega(u_1,u_3,u_2)+R_T(u_3,u_5)\omega(u_1,u_4,u_2)\\
&-R_T(u_2,u_5)\omega(u_1,u_4,u_3)\\
=&\phi([u_1,[[u_2,u_3]_T,u_4]_T]_T,u_5)+\rho^r_T(u_5)\phi(u_1,[[u_2,u_3]_T,u_4]_T)-\phi([[[u_1,u_2]_T,u_3]_T,u_4]_T,u_5)\\
&-\rho^r_T(u_5)\phi([[u_1,u_2]_T,u_3]_T,u_4)+\phi([[[u_1,u_3]_T,u_2]_T,u_4]_T,u_5)+\rho^r_T(u_5)\phi([[u_1,u_3]_T,u_2]_T,u_4)\\
&+\phi([[[u_1,u_4]_T,u_2]_T,u_3]_T,u_5)+\rho^r_T(u_5)\phi([[u_1,u_4]_T,u_2]_T,u_3)-\phi([[[u_1,u_4]_T,u_3]_T,u_2]_T,u_5)\\
&-\rho^r_T(u_5)\phi([[u_1,u_4]_T,u_3]_T,u_2)+\rho^r_T(u_5)\rho^l_T(u_1)\phi([u_2,u_3]_T,u_4)+\rho^r_T(u_5)\rho^l_T(u_1)\rho^r_T(u_4)\phi(u_2,u_3)\\
&-\rho^r_T(u_5)\rho^r_T(u_4)\phi([u_1,u_2]_T,u_3)-\rho^r_T(u_5)\rho^r_T(u_4)\rho^r_T(u_3)\phi(u_1,u_2)
+\rho^r_T(u_5)\rho^r_T(u_4)\phi([u_1,u_3]_T,u_2)\\
&+\rho^r_T(u_5)\rho^r_T(u_4)\rho^r_T(u_2)\phi(u_1,u_3)+\rho^r_T(u_5)\rho^r_T(u_3)\phi([u_1,u_4]_T,u_2)
+\rho^r_T(u_5)\rho^r_T(u_3)\rho^r_T(u_2)\phi(u_1,u_4)\\
&-\rho^r_T(u_5)\rho^r_T(u_2)\phi([u_1,u_4]_T,u_3)-\rho^r_T(u_5)\rho^r_T(u_2)\rho^r_T(u_3)\phi(u_1,u_4)\\
=&\rho^r_T(u_5)\phi(u_1,[[u_2,u_3]_T,u_4]_T)-\rho^r_T(u_5)\phi([[u_1,u_2]_T,u_3]_T,u_4)+\rho^r_T(u_5)\phi([[u_1,u_3]_T,u_2]_T,u_4)\\
&+\rho^r_T(u_5)\phi([[u_1,u_4]_T,u_2]_T,u_3)-\rho^r_T(u_5)\phi([[u_1,u_4]_T,u_3]_T,u_2)+\rho^r_T(u_5)\rho^l_T(u_1)\phi([u_2,u_3]_T,u_4)\\
&+\rho^r_T(u_5)\rho^l_T(u_1)\rho^r_T(u_4)\phi(u_2,u_3)-\rho^r_T(u_5)\rho^r_T(u_4)\phi([u_1,u_2]_T,u_3)
-\rho^r_T(u_5)\rho^r_T(u_4)\rho^r_T(u_3)\phi(u_1,u_2)\\
&+\rho^r_T(u_5)\rho^r_T(u_4)\phi([u_1,u_3]_T,u_2)+\rho^r_T(u_5)\rho^r_T(u_4)\rho^r_T(u_2)\phi(u_1,u_3)
+\rho^r_T(u_5)\rho^r_T(u_3)\phi([u_1,u_4]_T,u_2)\\
&+\rho^r_T(u_5)\rho^r_T(u_3)\rho^r_T(u_2)\phi(u_1,u_4)-\rho^r_T(u_5)\rho^r_T(u_2)\phi([u_1,u_4]_T,u_3)
-\rho^r_T(u_5)\rho^r_T(u_2)\rho^r_T(u_3)\phi(u_1,u_4)\\
=&\rho^r_T(u_5)\phi(u_1,[[u_2,u_3]_T,u_4]_T)-\rho^r_T(u_5)\phi([u_1,[u_2,u_3]_T]_T,u_4)+\rho^r_T(u_5)\phi([[u_1,u_4]_T,u_2]_T,u_3)\\
&-\rho^r_T(u_5)\phi([[u_1,u_4]_T,u_3]_T,u_2)+\rho^r_T(u_5)\rho^l_T(u_1)\phi([u_2,u_3]_T,u_4)+\rho^r_T(u_5)\rho^l_T(u_1)\rho^r_T(u_4)\phi(u_2,u_3)\\
&-\rho^r_T(u_5)\rho^r_T(u_4)\rho^l_T(u_1)\phi(u_2,u_3)-\rho^r_T(u_5)\rho^r_T(u_4)\phi(u_1,[u_2,u_3]_T)
+\rho^r_T(u_5)\rho^r_T(u_3)\phi([u_1,u_4]_T,u_2)\\
&+\rho^r_T(u_5)\rho^r_T([u_2,u_3]_T)\phi(u_1,u_4)-\rho^r_T(u_5)\rho^r_T(u_2)\phi([u_1,u_4]_T,u_3)\\
=&\rho^r_T(u_5)\phi([[u_1,u_4]_T,u_2]_T,u_3)-\rho^r_T(u_5)\phi([[u_1,u_4]_T,u_3]_T,u_2)-\rho^r_T(u_5)\rho^l_T([u_1,u_4]_T)\phi(u_2,u_3)\\
&+\rho^r_T(u_5)\rho^r_T(u_3)\phi([u_1,u_4]_T,u_2)-\rho^r_T(u_5)\rho^r_T(u_2)\phi([u_1,u_4]_T,u_3)-\rho^r_T(u_5)\phi([u_1,u_4]_T,[u_2,u_3]_T)\\
=&\rho^r_T(u_5)\Big(\phi([[u_1,u_4]_T,u_2]_T,u_3)-\phi([[u_1,u_4]_T,u_3]_T,u_2)-\rho^l_T([u_1,u_4]_T)\phi(u_2,u_3)\\
&+\rho^r_T(u_3)\phi([u_1,u_4]_T,u_2)-\rho^r_T(u_2)\phi([u_1,u_4]_T,u_3)-\phi([u_1,u_4]_T,[u_2,u_3]_T)\Big)\\
=&0.
\end{align*}
Since $\phi\in Z^2_T(V,L)$, we get $\partial^3_1(\omega)(u_1,u_2,u_3,u_4,u_5)=\partial^3_2(\omega)(u_1,u_2,u_3,u_4,u_5)=0$.
\end{proof}

\begin{lemma}\label{lem285,7}
Let $\alpha\in C^1_T(V,L)$. Then
\begin{align*}
\partial^1(\alpha)(u,v,w)=d_T(\alpha)([u,v]_T,w)+\rho^r_T(w)d(\alpha)(u,v).
\end{align*}
\end{lemma}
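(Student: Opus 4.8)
The plan is to expand both sides of the claimed identity directly, using only the definition of the Leibniz-algebra coboundary $d_T$, the definition of the Leibniz-triple-system coboundary $\partial^1$, and the formulas expressing $R_T,M_T,L_T$ and the bracket $\{\cdot,\cdot,\cdot\}_T$ in terms of $\rho^l_T,\rho^r_T$ and $[\cdot,\cdot]_T$. No use of the defining identities of the Leibniz algebra $(V,[\cdot,\cdot]_T)$ or of the relative Rota-Baxter operator $T$ should be needed here; it is a purely formal bookkeeping identity.

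First I would record that, since $\alpha\in C^1_T(V,L)$,
\[
d_T(\alpha)(p,q)=\rho^l_T(p)\alpha(q)+\rho^r_T(q)\alpha(p)-\alpha([p,q]_T),\qquad p,q\in V.
\]
Substituting $(p,q)=([u,v]_T,w)$ into this, and separately applying $\rho^r_T(w)$ to $d_T(\alpha)(u,v)$, then adding the two, the two occurrences of $\rho^r_T(w)\alpha([u,v]_T)$ carry opposite signs and cancel, leaving
\begin{align*}
&d_T(\alpha)([u,v]_T,w)+\rho^r_T(w)d_T(\alpha)(u,v)\\
=~&\rho^l_T([u,v]_T)\alpha(w)+\rho^r_T(w)\rho^l_T(u)\alpha(v)+\rho^r_T(w)\rho^r_T(v)\alpha(u)-\alpha\big([[u,v]_T,w]_T\big).
\end{align*}

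Next I would rewrite each of the four resulting terms using $L_T(u,v)x=\rho^l_T([u,v]_T)(x)$, $M_T(u,v)x=\rho^r_T(v)\rho^l_T(u)(x)$, $R_T(u,v)x=\rho^r_T(v)\rho^r_T(u)(x)$ and $\{u,v,w\}_T=[[u,v]_T,w]_T$, which turns the right-hand side into
\[
L_T(u,v)\alpha(w)+M_T(u,w)\alpha(v)+R_T(v,w)\alpha(u)-\alpha(\{u,v,w\}_T).
\]
By the definition of the $1$-coboundary operator $\partial^1$ of the Leibniz triple system $(V,\{\cdot,\cdot,\cdot\}_T)$ with coefficients in $(R_T,M_T,L_T)$ (the same formula already used in the proof of the preceding theorem), this last expression is exactly $\partial^1(\alpha)(u,v,w)$, which completes the argument.

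I do not expect a genuine obstacle: the whole content is the term-by-term matching above, and the only points requiring care are keeping the signs in the Leibniz-algebra coboundary formula straight so that the $\rho^r_T(w)\alpha([u,v]_T)$ terms truly cancel, and reading the symbol ``$d$'' in the second summand of the statement as $d_T$. Conceptually, the lemma says that $\partial^1$ on $1$-cochains of $(V,\{\cdot,\cdot,\cdot\}_T)$ factors as $d_T$ followed by the map $\phi\mapsto\big((u,v,w)\mapsto\phi([u,v]_T,w)+\rho^r_T(w)\phi(u,v)\big)$, i.e.\ it is the degree-one shadow of exactly the compatibility that made the construction $\omega$ in the preceding theorem land in $Z^3_T(V,L)$.
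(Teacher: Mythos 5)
Your proposal is correct and is essentially the paper's own proof run in the opposite direction: the paper starts from $\partial^1(\alpha)(u,v,w)$, expands it via the definitions of $L_T,M_T,R_T$ and $\{\cdot,\cdot,\cdot\}_T$, and then adds and subtracts $\rho^r_T(w)\alpha([u,v]_T)$ to regroup the terms into $d(\alpha)([u,v]_T,w)+\rho^r_T(w)d(\alpha)(u,v)$, which is exactly your cancellation argument read backwards. Your observation that no Leibniz or Rota-Baxter identities are needed, and that the symbol $d$ in the statement should be read as $d_T$, both match the paper.
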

\begin{proof}
For any $u,v,w\in V$, we have
\begin{align*}
&\partial^1(\alpha)(u,v,w)\\
=&L_T(u,v)\alpha(w)+M_T(u,w)\alpha(v)+R_T(v,w)\alpha(u)-\alpha(\{u,v,w\}_T)\\
=&\rho^l_T([u,v]_T)\alpha(w)+\rho^r_T(w)\rho^l_T(u)\alpha(v)+\rho^r_T(w)\rho^r_T(v)\alpha(u)-\alpha([[u,v]_T,w]_T)\\
=&\rho^l_T([u,v]_T)\alpha(w)+\rho^r_T(w)\alpha([u,v]_T)-\alpha([[u,v]_T,w]_T)\\
&+\rho^r_T(w)\Big(\rho^l_T(u)\alpha(v)+\rho^r_T(v)\alpha(u)-\alpha([u,v]_T)\Big)\\
=&d(\alpha)([u,v]_T,w)+\rho^r_T(w)d(\alpha)(u,v).
\end{align*}
\end{proof}

\begin{proposition}
Let $\phi_1,\phi_2\in Z^2_T(V,L)$. If $\phi_1,\phi_2$ are in the same cohomology class then $\omega_1,\omega_2$ defined by:
\begin{align*}
\omega_i(u,v,w)=\phi_i([u,v]_T,w)+\rho^r_T(w)\phi_i(u,v),~~i=1,2,
\end{align*}
are in the same cohomology class of the associated Leibniz triple system.
\end{proposition}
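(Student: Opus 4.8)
The plan is to reduce the statement to Lemma~\ref{lem285,7} together with the $\mathbb{F}$-linearity of the assignment $\phi\mapsto\omega_\phi$, where $\omega_\phi(u,v,w)=\phi([u,v]_T,w)+\rho^r_T(w)\phi(u,v)$. Since $\phi_1$ and $\phi_2$ lie in the same cohomology class for the Leibniz algebra $(V,[\cdot,\cdot]_T)$ with coefficients in $(L;\rho^l_T,\rho^r_T)$, there is a $1$-cochain $\alpha\in C^1_T(V,L)$ with $\phi_1-\phi_2=d^1_T\alpha$. The goal is then to exhibit a $1$-cochain whose image under the coboundary operator $\partial^1$ of the associated Leibniz triple system $(V,\{\cdot,\cdot,\cdot\}_T)$ equals $\omega_1-\omega_2$; this will show that $\omega_1-\omega_2$ is a $3$-coboundary, and hence that $\omega_1$ and $\omega_2$ represent the same cohomology class.

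First I would observe that $\phi\mapsto\omega_\phi$ is $\mathbb{F}$-linear and that the displayed formula makes sense on all of $C^2_T(V,L)$, not merely on $Z^2_T(V,L)$; in particular $\omega_1-\omega_2=\omega_{\phi_1-\phi_2}=\omega_{d^1_T\alpha}$. Next I would apply Lemma~\ref{lem285,7} to the $1$-cochain $\alpha$, which yields, for all $u,v,w\in V$,
\[
\partial^1(\alpha)(u,v,w)=d^1_T(\alpha)([u,v]_T,w)+\rho^r_T(w)\,d^1_T(\alpha)(u,v)=\omega_{d^1_T\alpha}(u,v,w).
\]
Combining the two observations gives $\omega_1-\omega_2=\partial^1\alpha$, so $\omega_1$ and $\omega_2$ differ by a $3$-coboundary of $(V,\{\cdot,\cdot,\cdot\}_T)$ with coefficients in $(R_T,M_T,L_T)$.

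Finally I would invoke the preceding theorem, which guarantees that $\omega_1$ and $\omega_2$ are $3$-cocycles (so that it is legitimate to speak of their cohomology classes), to conclude that $[\omega_1]=[\omega_2]$. I do not expect any genuine obstacle here: all the content has already been packaged into Lemma~\ref{lem285,7}, which unwinds $\partial^1$ of a $1$-cochain into exactly the ``$\omega$-shape'' built from $d^1_T$ of that cochain. The only points deserving a moment's care are the linearity of $\phi\mapsto\omega_\phi$ and the fact that this construction may be applied to $d^1_T\alpha$ even though the latter is a priori only a coboundary and not a postulated cocycle.
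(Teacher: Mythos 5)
Your proposal is correct and follows essentially the same route as the paper: write $\phi_1-\phi_2=d^1_T\alpha$, use linearity of $\phi\mapsto\omega_\phi$, and apply Lemma \ref{lem285,7} to identify $\omega_1-\omega_2$ with $\partial^1\alpha$, hence a $3$-coboundary. The only difference is cosmetic — you make explicit the linearity of the construction and the well-definedness of the classes, which the paper leaves implicit.
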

\begin{proof}
Let $\phi_1,\phi_2\in Z^2_T(V,L)$ be two cocycles in the same cohomology class, that is
\begin{align*}
\phi_2-\phi_1=d(\alpha),~~\alpha\in C^1_T(V,L).
\end{align*}
According to Lemma \ref{lem285,7}, we have
\begin{align*}
&\omega_2(u,v,w)-\omega_1(u,v,w)\\
=&(\phi_2-\phi_1)([u,v]_T,w)+\rho^r_T(w)(\phi_2-\phi_1)(u,v)\\
=&d(\alpha)([u,v]_T,w)+\rho^r_T(w)d(\alpha)(u,v)
=\delta^1(\alpha)(u,v,w),
\end{align*}
$\alpha\in C^1_T(V,L)$, which means that $\omega_1$ and $\omega_2$ are in the same cohomology class.
\end{proof}


\begin{thebibliography}{99999}
\bibitem{AMM} E. Abdaoui, S. Mabrouk, A. Makhlouf, {\it Rota-Baxter operators on pre-Lie superalgebras}, Bull. Malays. Math. Sci. Soc. 42 (4) (2019),1567-1606.
\bibitem{ALO} J. Adashev, M. Ladra, B. Omirov, {\it The second cohomology group of simple Leiniz algebras}, J. Algebra Appl. 17 (12) (2018), 1850222, 13 pp.
\bibitem{Di} H. Albuquerque, E. Barreiro, A. Calder\'{o}n, J. S\'{a}nchez-Delgado, {\it Leibniz triple systems admitting a multiplicative basis}, Comm. Algebra 48 (1) (2020), 430-440.
\bibitem{ABM} A. Arfa, N. BenFraj, A. Makhlouf, {\it Cohomology and deformations of $n$-Lie algebra morphisms}, J. Geom. Phys. 132 (2018), 64-74.
\bibitem{BGLW} R. Bai, L. Guo, J. Li, Y. Wu, {\it Rota-Baxter $3$-Lie algebras}, J. Math. Phys. 54 (6) (2013), 063504, 14pp.
\bibitem{Balavoine} D. Balavoine, {\it D\'{e}formations des alg\`{e}bres de Leibniz}, C. R. Acad. Sci. Paris S\'{e}r.  I Math. 319 (8) (1994), 783-788.
\bibitem{Baxter} G. Baxter, {\it An analytic problem whose solution follows from a simple algebraic identity}, Pacific J. Math. 10 (1960) 731-742.
\bibitem{Bordemann} M. Bordemann, {\it Generalized Lax pair, the modified classical Yang-Baxter equation, and affine geometry of Lie groups}, Comm. Math. Phys. 135 (1) (1990), 201-216.
\bibitem{Bremner} M. Bremner, J. S\'{a}nchez-Ortega, {\it Leibniz triple systems}, Commun. Contemp. Math. 16 (1) (2014), 1350051, 19 pp.
\bibitem{Cao} Y. Cao, D. Du, L. Chen, {\it The centroid of a Leibniz triple system and its properties}, J. Nat. Sci. Heilongjiang Univ. 05 (2015), 614-617.
\bibitem{CHMM} T. Chtioui, A. Hajjaji, S. Mabrouk, A. Makhlouf, {\it $\mathcal{O}$-operators on Lie triple system}, arXiv: 2204.01853v1.
\bibitem{Das} A. Das, {\it Deformations of associative Rota-Baxter operators}, J. Algebra 560 (2020), 144-180.
\bibitem{DS} A. Das, S. Sen, {\it Nijenhuis operators on Hom-Lie algebras}, Comm. Algebra 50 (3) (2022), 1038-1054.
\bibitem{Figueroa} J. Figueroa-O'Farrill, {\it Deformations of $3$-algebras}, J. Math. Phys. 50 (11) (2009), 113514, 27pp.
\bibitem{FZ} Y. Fr\'{e}gier, M. Zambon, {\it Simultaneous deformations of algebras and morphisms via derived brackets}, J. Pure Appl. Algebra 219 (12) (2015), 5344-5362.
\bibitem{Gerstenhaber} M. Gerstenhaber, {On the deformation of rings and algebras}, Ann. of Math. 79 (2) (1964), 59-103.
\bibitem{Co4} P. Kolesnikov, {\it Varieties of dialgebras, and conformal algebras}, Sibirsk. Mat. Zh. 49 (2) (2008), 322-339.
\bibitem{Kupershmidt} B. Kupershmidt, {\it What a classical $r$-matrix really is}, J. Nonlinear Math. Phys. 6 (4) (1999), 448-488.
\bibitem{LP} J.-L. Loday, T. Pirashvili, {\it Universal enveloping algebras of Leibniz algebras and $($co$)$homology}, Math. Ann. 296 (1) (1993), 139-158.
\bibitem{Ma} Y. Ma, L. Chen, {\it Some structure theories of Leibniz triple systems}, Algebr. Represent. Theory 20 (6) (2017), 1545-1569.
\bibitem{MS} A. Makhlouf, S. Silvestrov, {\it Notes on $1$-parameter formal deformations of Hom-assoviative and Hom-Lie algebras}, Forum Math. 22 (4) (2010), 715-739.
\bibitem{NR} A. Nijenhuis, R. Richardson, {\it Commutative algebra cohomology and deformations of Lie and associative algebras}, J. Algebra 9 (1968), 42-53.
\bibitem{TBGS} R. Tang, C. Bai, L. Guo, Y. Sheng, {\it Deformations and their controlling cohomologies of $\mathcal{O}$-operators}, Comm. Math. Phys. 368 (2) (2019), 665-700.
\bibitem{THS} R. Tang, S. Hou, Y. Sheng, {\it Lie $3$-algebras and deformations of relative Rota-Baxter operators on $3$-Lie algebras}, J. Algebra 567 (2021), 37-62.
\bibitem{TSZ} R. Tang, Y. Sheng, Y. Zhou, {\it Deformations of relative Rota-Baxter operators on Leibniz algebras}, Int. J. Geom. Methods Mod. Phys. 17 (12) (2020), 2050174, 21 pp.
\bibitem{WCM} X. Wu, L. Chen, Y. Ma, {\it Cohomology of Leibniz triple systems and its applicaitions}, arXiv: 2108.03996.
\bibitem{WMC1} X. Wu, Y. Ma, L. Chen, {\it Nijenhuis operators product structures and complex structures on Leibniz triple systems}, https://www.researchgate.net/publication/359896583.
\bibitem{YBB} R. Yadav, N. Behera, R. Bhutia, {\it Equivariant one-parameter deformations of Lie triple systems}, J. Algebra 568 (2021), 467–479.
\bibitem{ZQ} J. Zhao, Y. Qiao, {\it Cohomologies and deformations of relative Rota-Baxter operators on Lie-Yamaguti algebras}, arXiv: 2204.04872v1.


\end{thebibliography}
\end{document}